\documentclass[12pt]{amsart}

\textwidth=16.5cm \textheight=22.5cm \hoffset=-17mm \voffset=-16mm
\headheight=14pt \headsep=22pt
\usepackage{amsmath}
\usepackage{amsfonts}
\usepackage{amssymb}
\usepackage{latexsym}

\newtheorem{theorem}{Theorem}[section]
\newtheorem{proposition}[theorem]{Proposition}
\newtheorem{corollary}[theorem]{Corollary}

\theoremstyle{remark}
\newtheorem{remark}[theorem]{Remark}
\theoremstyle{definition}
\newtheorem{example}[theorem]{Example}
\numberwithin{equation}{section}

\def\sD{{\mathfrak D}}      
   \def\sH{{\mathfrak H}}   
   \def\sK{{\mathfrak K}}   \def\sL{{\mathfrak L}}
\def\sM{{\mathfrak M}}   \def\sN{{\mathfrak N}}

      \def\dC{{\mathbb C}}
\def\dD{{\mathbb D}}

   \def\dT{{\mathbb T}}

   \def\cH{{\mathcal H}}   
   \def\cK{{\mathcal K}}   \def\cL{{\mathcal L}}
\def\cM{{\mathcal M}}

\def\cV{{\mathcal V}}

\def\bL{{\mathbf L}}
\def\RE{{\rm Re\,}}

\def\wt{\widetilde}
\def\wh{\widehat}

\def\f{\varphi}

\def\uphar{{\upharpoonright\,}}

\def\ran{{\rm ran\,}}
\def\cran{{\rm \overline{ran}\,}}
\def\ker{{\rm ker\,}}
\def\dom{{\rm dom\,}}

\def\dim{{\rm dim\,}}

\def\cspan{{\rm \overline{span}\, }}

\keywords{Passive system, transfer function, shorted operator,
Kalman - Yakubovich - Popov inequality, Riccati equation}
\subjclass[2000]{Primary: 47A48, 47A56, 47A63, 47A64, 93B28;
Secondary 93B15, 94C05.}

\begin{document}
\title [The Kalman--Yakubovich--Popov inequality]
{The Kalman--Yakubovich--Popov inequality for passive discrete
time-invariant systems}
\author{
Yury~Arlinski\u{i}}
\address{Department of Mathematical Analysis \\
East Ukrainian National University \\
Kvartal Molodyozhny 20-A \\
Lugansk 91034 \\
Ukraine} \email{yury\_arlinskii@yahoo.com}

\begin{abstract}

We consider the Kalman - Yakubovich - Popov (KYP)
 inequality
 \[
\begin{pmatrix} X-A^* XA-C^*C &
-A^*X B- C^*D\cr -B^*X A-D^* C & I- B^*X B-D^*D
\end{pmatrix}
\ge 0
 \]
  for contractive operator matrices
$
\begin{pmatrix} A&B\cr C &D
\end{pmatrix}:\begin{pmatrix}\mathfrak{H}\cr\mathfrak{M} \end{pmatrix}\to\begin{pmatrix}\mathfrak{H}\cr\mathfrak{N}
\end{pmatrix},
$ where $\mathfrak{H},$ $\mathfrak{M}$, and $\mathfrak{N}$ are
separable Hilbert spaces.
 We restrict ourselves to the solutions $X$
from the operator interval $[0, I_\mathfrak{H}]$. Several equivalent
forms of KYP are obtained. Using the parametrization of the blocks
of contractive operator matrices, the Kre\u{\i}n shorted operator,
and the M\"obius representation of the Schur class operator-valued
function we find several equivalent forms of the KYP inequality.
Properties of solutions are established and it is proved that the
minimal solution of the KYP inequality satisfies the corresponding
algebraic Riccati equation and can be obtained by the iterative
procedure with the special choice of the initial point. In terms of
the Kre\u{\i}n shorted operators a necessary condition and some
sufficient conditions for uniqueness of the solution are
established.

\end{abstract} \maketitle

\tableofcontents
\section{Introduction}
The system of equations
\[
\left\{
\begin{split}
&h_{k+1}=Ah_k+B\xi_k,\\
&\sigma_k=Ch_k+D\xi_k
\end{split}
\right.,\qquad k\ge 0
\]
describes the evolution of a \textit{linear discrete time-invariant
system} $\tau=\left\{\begin{pmatrix} A&B \cr
C&D\end{pmatrix};\sH,\sM,\sN\right\}$ with bounded linear operators
$A$, $B$, $C$, $D$ and separable Hilbert spaces $\sH$ (state space),
$\sN$ (input space), and $\sM$ (output space) $\sM$. If the linear
operator $T_\tau$ operator by the block-matrix
\[
T_\tau=\begin{pmatrix} A&B \cr C&D\end{pmatrix} : \begin{pmatrix}
\sH \\ \sM \end{pmatrix} \to
\begin{pmatrix} \sH \\ \sN \end{pmatrix}
\]
is contractive, then the corresponding discrete-time system is said
to be \textit{passive}. If the block-operator matrix $T_\tau$ is
isometric (co-isometric, unitary) then the corresponding system is
called isometric (co-isometric, conservative). Isometric and
co-isometric systems were studied by L.~de Branges and J.~Rovnyak
\cite{BrR1}, \cite{BrR2} and by T.~Ando \cite{Ando}, conservative
systems have been investigated by B.~Sz.-Nagy and C.~Foias \cite{SF}
and M.S.~Brodski\u{\i} \cite{Br1}. Passive systems are studied by
D.Z.~Arov et al \cite{A, Arov, ArKaaP, ArKaaP3, ArNu1, ArNu2, ArSt}.
The subspaces
\begin{equation}
\label{CO} \sH^c_\tau:=\cspan\{A^{n}B\sM:\,n=0,1,\ldots\}
\quad\mbox{and}
\quad\sH^o_\tau=\cspan\{A^{*n}C^*\sN:\,n=0,1,\ldots\}
\end{equation}
 are called the
\textit{controllable} and \textit{observable} subspaces of the
system $\tau=\left\{\begin{pmatrix} A&B \cr
C&D\end{pmatrix};\sH,\sM,\sN\right\}$, respectively. If
$\sH^c_\tau=\sH$ ($\sH^o_\tau=\sH$) then the system $\tau$ is said
to be \textit{controllable} (\textit{observable}), and
\textit{minimal} if $\tau$ is both controllable and observable. If
$\sH=\rm{closure}\{\sH^c_\tau+\sH^o_\tau\}$ then the system $\tau$
is said to be a \textit{simple}. Note that from \eqref{CO} it
follows that
\[
(\sH^c_\tau)^\perp=\bigcap\limits_{n=0}^\infty\ker(B^*A^{*n}),\;
(\sH^o_\tau)^\perp=\bigcap\limits_{n=0}^\infty\ker(CA^{n}).
\]
Therefore
\begin{enumerate}
\item the system $\tau$ is controllable $\iff
\;\bigcap\limits_{n=0}^\infty\ker(B^*A^{*n})=\{0\}$;
\item the system $\tau$ is observable $\iff
\;\bigcap\limits_{n=0}^\infty\ker(CA^{n})=\{0\}$;
\item the system $\tau$ is simple $\iff
\left(\bigcap\limits_{n=0}^\infty\ker(B^*A^{*n})\right)\bigcap
\left(\bigcap\limits_{n=0}^\infty\ker(CA^{n})\right)=\{0\}.$
\end{enumerate}
The function
\[
\Theta_\tau(\lambda):=D+\lambda C(I_{\sH}-\lambda A)^{-1}B, \quad
\lambda \in \dD,
\]
is called \textit{transfer
function} of the system $\tau$.

The result of D.Z.~Arov~\cite{A} states that two minimal systems
$\tau_1$ and $\tau_2$ with the same transfer function
$\Theta(\lambda)$ are  \textit{pseudo-similar}, i.e. there is a
closed densely defined operator $Z:\sH_{1}\to\sH_{2}$ such that $Z$
is invertible, $Z^{-1}$ is densely defined, and
\[
Z A_1f =A_2 Zf, \; C_1f=C_2 Zf,\; f\in\dom Z,\;\mbox{and} \quad
ZB_1=B_2.
\]
If the system $\tau$ is passive then $\Theta_\tau$ belongs to the
\textit{Schur class} ${\bf S}(\sM,\sN)$, i.e.,
$\Theta_\tau(\lambda)$ is holomorphic in the unit disk
$\dD=\{\lambda \in \dC:|\lambda|<1\}$ and its values are contractive
linear operators from $\sM$ into $\sN$. It is well known
\cite{BrR2}, \cite{SF}, \cite{Ando}, \cite{A}, \cite{ArKaaP} that
every operator-valued function $\Theta(\lambda)$ of the Schur class
${\bf S}(\sM,\sN)$ can be realized as the transfer function of some
passive system, which can be chosen as a simple conservative
(isometric controllable, co-isometric observable, passive and
minimal, respectively). Moreover, two simple conservative (isometric
controllable, co-isometric observable) systems
$$\tau_1=\left\{\begin{pmatrix} A_1&B_1 \cr
C_1&D\end{pmatrix};\sH_1,\sM,\sN\right\}\quad\mbox{and}\quad
\tau_2=\left\{\begin{pmatrix} A_2&B_2 \cr
C_2&D\end{pmatrix};\sH_2,\sM,\sN\right\}$$
 having the same transfer
function are unitarily similar \cite{BrR1}, \cite{BrR2}, \cite{Br1},
\cite{Ando}, i.e. there exists a unitary operator $U$ from $\sH_{1}$
onto $\sH_{2}$ such that
\[
A_1 =U^{-1}A_2U,\; B_1=U^{-1}B_2,\; C_1=C_2 U.
\]

In \cite{ArNu1}, \cite{ArNu2} necessary and sufficient conditions on
operator-valued function $\Theta(\lambda)$ from the Schur class
${\bf S}(\sM,\sN)$ have been established in order that all minimal
passive systems having the transfer function $\Theta(\lambda)$ be
unitarily similar or similar.

A system $\tau=\left\{\begin{pmatrix} A&B \cr
C&D\end{pmatrix};\sH,\sM,\sN\right\}$ is called \textit{$X$-passive}
with respect to the supply rate function
$w(u,v)=||u||^2_\sM-||v||^2_\sN$, $u\in\sM,$ $v\in\sN$ \cite{Staf}
if there exists a positive selfadjoint operator $X$ in $\sH$,
possibly unbounded, such that
\[
A\,\dom X^{1/2}\subset \dom X^{1/2},\;\ran B\subset\dom X^{1/2}
\]
and
\begin{equation}
\label{dissip}
 ||X^{1/2}(Ax+Bu)||^2_\sH-||X^{1/2}x||^2_\sH\le
||u||^2_\sM-||Cx+Du||^2_\sN,\;x\in\dom X^{1/2},\,u\in\sM.
\end{equation}
The condition \eqref{dissip} is equivalent to
\begin{equation}
\label{kyp3}
\begin{split}
&\left\|\begin{pmatrix}{X}^{1/2}&0\cr
0&I_\sM\end{pmatrix}\begin{pmatrix}x\cr u\end{pmatrix}
\right\|^2-\left\|\begin{pmatrix}{X}^{1/2}&0\cr
0&I_\sN\end{pmatrix}\begin{pmatrix} A&B
\cr C&D\end{pmatrix}\begin{pmatrix}x\cr u\end{pmatrix}\right\|^2\ge 0\\
& \qquad\mbox{for all}\quad\; x\in\dom X^{1/2},\, u\in\sM.
\end{split}
\end{equation}
If $X$ is bounded then \eqref{kyp3} becomes to the \textit{Kalman --
Yakubovich -- Popov inequality} (for short, KYP inequality)
\begin{equation}
\label{kyp1} L_{\tau}(X)= \begin{pmatrix} X-A^* XA-C^*C & -A^*X B-
C^*D\cr -B^*X A-D^* C & I- B^*X B-D^*D
\end{pmatrix}
\ge 0.
\end{equation}
The classical Kalman-Yakubovich-Popov lemma states that \textit{if
$\tau=\left\{\begin{pmatrix} A&B \cr
C&D\end{pmatrix};\sH,\sM,\sN\right\}$ is a minimal system with
finite dimensional state space $\sH$ then the set of the solutions
of \eqref{kyp1} is non-empty if and only if the transfer function
$\Theta_\tau$ belongs to the Schur class. If this is a case then the
set of all solutions of \eqref{kyp1} contains the minimal and
maximal elements}.

For the case $\dim\sH=\infty$ the theory of generalized KYP
inequality \eqref{kyp3} is developed in \cite{ArKaaP3}.
 It is proved that if $\tau=\left\{\begin{pmatrix} A&B
\cr C&D\end{pmatrix};\sH,\sM,\sN\right\}$ is a minimal system then\\
\textit{the KYP inequality \eqref{kyp3} for $\tau$ has a solution
$X$ if and only if the transfer function $\Theta_\tau$ coincides
with a Schur class function in a neighborhood of the origin.
Moreover, there are maximal $X_{\max}$ and minimal $X_{\min}$
solutions in the sense of quadratic forms:}\\
\textit{if $X$ is a solution of \eqref{kyp3} then $\dom
X^{1/2}_{\min}\supset \dom X^{1/2}\supset \dom X^{1/2}_{\max}$ and}
\[
\begin{split}
& ||X^{1/2}_{\min}u||^2\le ||X^{1/2}u||^2\quad\mbox{for all}\quad
u\in\dom X^{1/2},\\
&||X^{1/2}v||^2\le ||X^{1/2}_{\max}v||^2 \quad\mbox{for all}\quad
v\in\dom X^{1/2}_{\max}.
\end{split}
\]
 Let $\Theta(\lambda)\in{\bf
S}(\sM,\sN)$. A passive system
$$\dot\tau=\left\{\begin{pmatrix} \dot{A}&\dot{B}
\cr \dot{C}&D\end{pmatrix};\dot{\sH},\sM,\sN\right\}$$ with the
transfer function $\Theta(\lambda)$ is called \textit{optimal}
realization of $\Theta(\lambda)$ \cite{Arov}, \cite{ArKaaP} if for
each passive system $\tau=\left\{\begin{pmatrix} A&B \cr
C&D\end{pmatrix};\sH,\sM,\sN\right\}$ with transfer function
$\Theta(\lambda)$ and for each input sequence $ u_0, u_1, u_2,...$
in $\sM$  the inequalities
\[
\left\|\sum\limits_{k=0}^n\dot{A}^k\dot{B}u_k\right\|_{\dot{\sH}}\le
\left\|\sum\limits_{k=0}^n A^kBu_{k}\right\|_{\sH}
\]
 hold for all $n=0,1,...$.

 An observable passive system
\[
\dot\tau_*=\left\{\begin{pmatrix} \dot{A}_*&\dot{B}_* \cr
\dot{C}_*&D\end{pmatrix};\dot{\sH}_{*},\sM,\sN\right\}
\]
is called $(*)$ -\textit{optimal} realization of the function
$\Theta(\lambda)$ \cite{Arov}, \cite{ArKaaP} if for each observable
passive system $\tau=\left\{\begin{pmatrix} A&B \cr
C&D\end{pmatrix};\sH,\sM,\sN\right\}$ with transfer function
$\Theta(\lambda)$ hold
\[
\left\|\sum\limits_{k=0}^n\dot{A}^k_*\dot{B}_*u_{k}\right\|_{\dot{\sH}_{*}}\ge
\left\|\sum\limits_{k=0}^n A^kBu_{k}\right\|_\sH
\]
for all $n=0,1,...$ and every choice of vectors $u_0,u_1, u_2,...,
u_n\in\sM$.

Two minimal and optimal ($(*)$-optimal) passive realizations of a
function from the Schur class are unitary similar \cite{ArKaaP}. In
addition the system
\[
\dot\tau_*=\left\{\begin{pmatrix} \dot{A}_*&\dot{B}_* \cr
\dot{C}_*&D\end{pmatrix};\dot{\sH}_{*},\sM,\sN\right\}
\]
is $(*)$-optimal passive minimal realization of the function
$\Theta(\lambda)$ if and only if the system
\[
\dot\tau_*^*=\left\{\begin{pmatrix} \dot{A}^*_*&\dot{B}^*_* \cr
\dot{C}^*_*&D^*\end{pmatrix};\dot{\sH}_{*},\sN,\sM\right\}
\]
is optimal passive minimal realization of the function
$\Theta^*(\overline\lambda)$ \cite{ArKaaP}.

Let $\tau=\left\{\begin{pmatrix} A&B \cr
C&D\end{pmatrix};\sH,\sM,\sN\right\}$ be a minimal system and let
the transfer function $\Theta$ coincides with a Schur class function
in a neighborhood of the origin. Let $X_{\min}$ and $X_{\max}$ be
the minimal and maximal solutions of the KYP inequality
\eqref{kyp3}. It is proved in \cite{ArKaaP} that the systems
\[
\begin{array}{l}
 \dot\tau=\left\{\begin{pmatrix}
X^{1/2}_{\min}AX^{-1/2}_{\min}&X^{1/2}_{\min}B \cr
CX^{-1/2}_{\min}&D\end{pmatrix};\sH,\sM,\sN\right\},\\
 \dot\tau_*=\left\{\begin{pmatrix}
X^{1/2}_{\max}AX^{-1/2}_{\max}&X^{1/2}_{\max}B \cr
CX^{-1/2}_{\max}&D\end{pmatrix};\sH,\sM,\sN\right\}
\end{array}
\]
are minimal optimal and minimal $(*)$-optimal realizations of
$\Theta$, respectively. The contractive operators
$T_1=\begin{pmatrix} X^{1/2}_{\min}AX^{-1/2}_{\min}&X^{1/2}_{\min}B
\cr CX^{-1/2}_{\min}&D\end{pmatrix}$ and $T_2=\begin{pmatrix}
X^{1/2}_{\max}AX^{-1/2}_{\max}&X^{1/2}_{\max}B \cr
CX^{-1/2}_{\max}&D\end{pmatrix}$ are defined on $\dom
X^{-1/2}_{\min}\oplus \sM$ and $\dom X^{-1/2}_{\max}\oplus \sM$,
respectively.

Let $\dT$ be a unit circle. As it is well known \cite{SF} a function
$\Theta(\lambda)$ from the Schur class ${\bf S}(\sM,\sN)$ has almost
everywhere non-tangential strong limit values $\Theta(\xi),$
$\xi\in\dT.$ Denote by $\varphi_\Theta$ ($\psi_\Theta$) the outer
(co-outer) function which is a solution of the following
factorization problem \cite{SF}
\begin{enumerate}
\def\labelenumi{\rm (\roman{enumi})}
\item $\varphi_\Theta^*(\xi)\varphi_\Theta(\xi)\le I_\sM-\Theta^*(\xi)\Theta(\xi)$

\noindent ($\psi_\Theta(\xi)\psi^*_\Theta(\xi)\le
I_\sN-\Theta(\xi)\Theta^*(\xi)$) almost everywhere on $\dT$;
\item if  $\wt\varphi(\lambda)$ ($\wt\psi(\lambda)$) is a Schur class function
such that almost everywhere on $\dT$ holds
$\wt\varphi^*(\xi)\wt\varphi(\xi)\le I_\sM-\Theta^*(\xi)\Theta(\xi)$
($\wt \psi(\xi)\wt\psi^*(\xi)\le I_\sN-\Theta(\xi)\Theta^*(\xi)$)
then

\noindent $\wt\varphi^*(\xi)\wt\varphi(\xi)\le
\varphi_\Theta^*(\xi)\varphi_\Theta(\xi)$ ($\wt
\psi(\xi)\wt\psi^*(\xi)\le \psi_\Theta(\xi)\psi^*_\Theta(\xi)$)
 almost everywhere on
$\dT$.
\end{enumerate}
The function $\varphi_\Theta$ ($\psi_\Theta$) is uniquely defined up
to a left (right) constant unitary factor. The functions
$\f_\Theta(\lambda)$ and $\psi_\Theta(\lambda)$ are called the right
and left \textit{defect functions} associated with $\Theta(\lambda)$
\cite{DR}, \cite{BD}, \cite{BDFK1}, \cite{BDFK2} (in \cite{BC} these
functions are called the right and left spectral factors). By means
of the right (left) defect function the construction of the minimal
and optimal ($(*)$-optimal) realization of the function
$\Theta(\lambda)\in{\bf S}(\sM,\sN)$ is given by D.Z.~Arov in
\cite{Arov}.  In \cite{ArKaaP} the construction of the optimal
($(*)$-optimal) realization is given as the first (second)
restriction of the the simple conservative system with transfer
function $\Theta$.

The next theorem summarizes some results established in \cite{Arov},
\cite{ArKaaP}, \cite{ArNu2}, \cite{BD}, \cite{BDFK1}, \cite{BDFK2}.
\begin{theorem}\label{DBR}
 Let $\Theta(\lambda)\in {\bf
S}(\sM,\sN)$ and let
\[
\tau=\left\{\begin{pmatrix} A&B \cr
C&D\end{pmatrix};\sH,\sM,\sN\right\}
\]
be a simple conservative system with transfer function $\Theta$.
Then
\begin{enumerate}
\item
the subspace $(\sH^o)^\perp$ ($(\sH^c)^\perp$) is invariant with
respect to $A$ ($A^*$) and the restriction $A\uphar (\sH^o)^\perp$
($A^*\uphar (\sH^c)^\perp$) is a unilateral shift;
\item the functions $\varphi_\Theta(\lambda)$ and $\psi_\Theta(\lambda)$
take the form
\begin{equation}
\label{DEFECT}
\begin{split}
& \varphi_\Theta(\lambda)=P_\Omega(I_\sH-\lambda A)^{-1}B,\\
&\psi_\Theta(\lambda)=C(I_\sH-\lambda A)^{-1}\uphar\Omega_*,
\end{split}
\end{equation}
where
\[
\Omega=(\sH^o)^\perp\ominus A(\sH^o)^\perp ,\;
\Omega_*=(\sH^c)^\perp\ominus A^*(\sH^c)^\perp;
\]
\item
$\varphi_\Theta(\lambda)=0$ ($\psi_\Theta(\lambda)=0$) if and only
if the system $\tau$ is observable (controllable).
\end{enumerate}
\end{theorem}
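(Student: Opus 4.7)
My plan is to exploit the unitarity of $T_\tau$, which yields the four identities
\[
A^*A+C^*C=I_\sH,\quad AA^*+BB^*=I_\sH,\quad B^*A+D^*C=0,\quad A^*B+C^*D=0.
\]
I will establish (1), the formula for $\varphi_\Theta$ in (2), and the observability half of (3); the corresponding statements for $(\sH^c)^\perp$, $\psi_\Theta$, and controllability then follow by applying the same argument to the adjoint system $\tau^*=\left\{T_\tau^*;\sH,\sN,\sM\right\}$, which is again simple conservative, swaps controllable and observable subspaces, and has transfer function $\Theta(\bar\lambda)^*$.

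For (1), $(\sH^o)^\perp=\bigcap_{n\ge 0}\ker(CA^n)$ is $A$-invariant because $CA^n(Ax)=CA^{n+1}x=0$ for $x\in(\sH^o)^\perp$. Each such $x$ satisfies $Cx=0$, so $\|Ax\|^2=\|x\|^2-\|Cx\|^2=\|x\|^2$, and $V:=A\uphar(\sH^o)^\perp$ is an isometry. To prove $V$ is a pure shift, take $x\in\bigcap_{n\ge 0}V^n(\sH^o)^\perp$. Since $V$ is isometric, $A^{*n}x\in(\sH^o)^\perp$ for every $n$, and the equality $A^{*n}x=A(A^{*(n+1)}x)$ combined with $B^*A=-D^*C$ and $CA^{*(n+1)}x=0$ gives
\[
B^*A^{*n}x=-D^*C A^{*(n+1)}x=0\qquad(n\ge 0),
\]
hence $x\in(\sH^c)^\perp\cap(\sH^o)^\perp=\{0\}$ by simplicity.

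For (2), write $A$ as the block operator matrix $\left(\begin{smallmatrix}A_{11}&0\\A_{21}&V\end{smallmatrix}\right)$ relative to $\sH=\sH^o\oplus(\sH^o)^\perp$ (the upper-right block vanishes by part (1)), and correspondingly $B=\bigl(\begin{smallmatrix}B_1\\B_2\end{smallmatrix}\bigr)$, $C=(C_1,0)$ (the second component vanishes because $(\sH^o)^\perp\subset\ker C$). Projecting the unitarity relations to the relevant blocks gives $V^*A_{21}=0$ and $V^*B_2=0$, so $A_{21}\sH^o$ and $B_2\sM$ lie in $\ker V^*=\Omega$. Inverting the block lower-triangular matrix $I-\lambda A$, a direct computation yields
\[
P_\Omega(I_\sH-\lambda A)^{-1}B=B_2+\lambda A_{21}(I_{\sH^o}-\lambda A_{11})^{-1}B_1=:\varphi(\lambda),
\]
while $\Theta(\lambda)=D+\lambda C_1(I_{\sH^o}-\lambda A_{11})^{-1}B_1$. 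Combining these two resolvent expressions with the projected unitarity identities (in particular $A_{11}^*A_{11}+A_{21}^*A_{21}+C_1^*C_1=I$, $A_{11}^*B_1+A_{21}^*B_2+C_1^*D=0$, $B_1^*B_1+B_2^*B_2+D^*D=I$) and the resolvent identity $\lambda A_{11}(I-\lambda A_{11})^{-1}=(I-\lambda A_{11})^{-1}-I$, cancellation yields the clean formula
\[
\varphi(\lambda)^*\varphi(\lambda)+\Theta(\lambda)^*\Theta(\lambda)=I_\sM-(1-|\lambda|^2)B_1^*(I-\bar\lambda A_{11}^*)^{-1}(I-\lambda A_{11})^{-1}B_1.
\]
This identity shows $\varphi\in{\bf S}(\sM,\Omega)$ and $\varphi^*\varphi\le I_\sM-\Theta^*\Theta$ in $\dD$, with equality a.e.\ on $\dT$, so $\varphi$ satisfies property (i) of the definition of the right defect function. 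The maximality in (ii) reduces to outerness of $\varphi$, which I will derive from the density of $\{P_\Omega A^nBu:n\ge 0,\,u\in\sM\}$ in $\Omega$: any vector in its orthogonal complement lies in $(\sH^c)^\perp\cap\Omega\subset(\sH^c)^\perp\cap(\sH^o)^\perp=\{0\}$ by simplicity. Combined with a Beurling--Lax argument applied to the shift $V$, this identifies $\varphi$ with $\varphi_\Theta$.

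For (3), $\varphi_\Theta\equiv 0$ iff $P_\Omega A^nBu=0$ for all $n,u$, iff $\Omega\perp\sH^c$; since $\Omega\subset(\sH^o)^\perp$, simplicity forces $\Omega=\{0\}$, and the Wold decomposition from part (1) then gives $(\sH^o)^\perp=\{0\}$, i.e.\ $\tau$ is observable, while the converse is immediate. The main obstacle in this plan is the outerness step in (2): the algebraic identity above already establishes that $\varphi$ is a Schur-class minorant of $(I-\Theta^*\Theta)^{1/2}$ with the correct boundary modulus, but promoting this to the maximality characterization of $\varphi_\Theta$ requires a genuinely operator-theoretic input beyond the block-matrix arithmetic, most naturally via Beurling--Lax applied to the shift $V$ constructed in (1).
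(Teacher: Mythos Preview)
The paper does not prove Theorem~\ref{DBR}: it is stated in the Introduction as a summary of results from \cite{Arov}, \cite{ArKaaP}, \cite{ArNu2}, \cite{BD}, \cite{BDFK1}, \cite{BDFK2}, with no argument given. So there is no in-paper proof to compare against; your proposal has to stand on its own.

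Parts (1) and (3) of your sketch are correct and follow the standard route. Your derivation of the formula $\varphi(\lambda)=P_\Omega(I_\sH-\lambda A)^{-1}B=B_2+\lambda A_{21}(I_{\sH^o}-\lambda A_{11})^{-1}B_1$ and of the identity
\[
\varphi(\lambda)^*\varphi(\lambda)+\Theta(\lambda)^*\Theta(\lambda)=I_\sM-(1-|\lambda|^2)\,B_1^*(I-\bar\lambda A_{11}^*)^{-1}(I-\lambda A_{11})^{-1}B_1
\]
are also correct (this is precisely the statement that the augmented system with state space $\sH^o$ and output space $\sN\oplus\Omega$ is isometric). Hence $\varphi^*(\xi)\varphi(\xi)=I_\sM-\Theta^*(\xi)\Theta(\xi)$ a.e.\ on $\dT$, and $\varphi$ has the right boundary modulus.

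The genuine gap is exactly where you locate it, but your proposed bridge is the wrong one. A short computation gives $P_\Omega A^nBu=\varphi_n u$, the $n$-th Taylor coefficient of $\varphi$ applied to $u$; so the density you prove is that the Taylor coefficients of $\varphi$ have dense joint range in $\Omega$. This is necessary but \emph{not} sufficient for outerness: for instance $\varphi(z)=zI_\Omega$ has Taylor coefficients spanning $\Omega$ yet is inner. Outerness requires $\overline{\varphi\,H^2(\sM)}=H^2(\Omega)$, and the density that simplicity actually hands you in $(\sH^o)^\perp\cong H^2(\Omega)$ --- namely that $P_{(\sH^o)^\perp}\sH^c$ is dense, with $P_{(\sH^o)^\perp}A^nBu$ corresponding to the polynomial $\sum_{k=0}^n w^k\varphi_{n-k}u$ --- does not coincide with the range of the multiplication operator $M_\varphi$. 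A bare Beurling--Lax invocation does not close this gap. In the sources the paper cites, outerness is obtained through the functional model: one realizes $(\sH^o)^\perp$ explicitly as the residual subspace in the Sz.-Nagy--Foias or de~Branges--Rovnyak model, and outerness of $\varphi$ is read off from the construction (equivalently, from the fact that $\bigl(\begin{smallmatrix}\Theta\\ \varphi\end{smallmatrix}\bigr)$ is inner together with a minimality property of the isometric embedding of $\sH^o$). You would need to import that model-theoretic step, not just the coefficient density, to finish part~(2).
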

The proof of next theorem is based on the concept and properties of
optimal and $*$-optimal passive systems.
\begin{theorem}
\label{ARNU}\cite{Arov}, \cite{ArKaaP}, \cite{ArNu2}. Let
$\Theta(\lambda)\in{\bf S}(\sM,\sN)$. Then
\begin{enumerate}
\item if $\Theta$ is bi-inner and $\tau$ is a simple passive system
with transfer function $\Theta$ then $\tau$ is conservative system;
\item if $\f_\Theta(\lambda)=0$  or $\psi_\Theta(\lambda)=0$ then
all passive minimal systems with transfer function $\Theta(\lambda)$
are unitary equivalent and if $\f_\Theta(\lambda)=0$ and
$\psi_\Theta(\lambda)=0$ then they are in addition conservative.
\end{enumerate}
\end{theorem}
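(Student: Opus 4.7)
The plan is to deduce both parts from the properties of minimal optimal and $(*)$-optimal realizations summarized in the preceding text, with Theorem~\ref{DBR} furnishing the bridge between vanishing defect functions and the simple conservative model.

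For part (2), suppose $\varphi_\Theta=0$, let $\dot\tau$ be the minimal optimal realization of $\Theta$, and let $\tau$ be any minimal passive realization. The optimality inequality produces a densely defined contraction $Z\colon\sH\to\dot{\sH}$ with $Z\bigl(\sum_{k=0}^n A^kBu_k\bigr)=\sum_{k=0}^n\dot{A}^k\dot{B}u_k$; by controllability of $\tau$ this extends to a contraction on all of $\sH$ that intertwines the two systems. The crucial step is to express the norm defect $\|h\|^2-\|Zh\|^2$ on a controllability vector $h$ in terms of the right defect function: passing to the simple conservative dilation of $\tau$ and using the realization formula \eqref{DEFECT} for $\varphi_\Theta$ together with the shift structure from Theorem~\ref{DBR}(1), this defect can be written as an $H^2(\sM)$-norm of $\varphi_\Theta$ applied to the input sequence. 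Hence $\varphi_\Theta=0$ forces $Z$ to be isometric; the intertwining relations and controllability of $\dot\tau$ then make $\ran Z$ an $\dot{A}$-invariant subspace containing $\dot{B}\sM$, so $\ran Z=\dot{\sH}$ and $Z$ is unitary. The case $\psi_\Theta=0$ follows by applying the same argument to the adjoint realizations of $\Theta^*(\bar\lambda)$, whose right defect function equals $\psi_\Theta$.

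For part (1), bi-innerness of $\Theta$ means $\varphi_\Theta=0=\psi_\Theta$, so by Theorem~\ref{DBR}(3) the simple conservative realization $\tau_c$ of $\Theta$ is simultaneously controllable and observable, hence minimal. Given a simple passive realization $\tau$, form its simple conservative dilation $\tau_d$; since $\tau_d$ again has transfer function $\Theta$ it is unitarily equivalent to $\tau_c$, and is therefore minimal. Unwinding the standard dilation construction shows that the added state space $\sH_d\ominus\sH$ is built from the defect operators of $T_\tau$ and $T_\tau^*$ and carries precisely the unilateral shift summands described in Theorem~\ref{DBR}(1); minimality of $\tau_d$ collapses both, so $\sH_d=\sH$ and $\tau=\tau_d$ is conservative. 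Part (2) then delivers unitary equivalence among all such realizations.

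The main technical obstacle is the defect-function identity for $\|h\|^2-\|Zh\|^2$ in part (2). Its proof requires fixing compatible functional models for $\tau$ and $\dot\tau$, carefully tracing controllability vectors through the shift part of the simple conservative dilation via \eqref{DEFECT}, and exploiting the observability of $\dot\tau$. Once this identity is in hand, both parts of the theorem follow from the structural dichotomy provided by Theorem~\ref{DBR} and the uniqueness (up to unitary equivalence) of the minimal optimal and $(*)$-optimal realizations.
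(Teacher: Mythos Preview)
The paper does not contain a proof of Theorem~\ref{ARNU}. This theorem is quoted from the references \cite{Arov}, \cite{ArKaaP}, \cite{ArNu2}, and the only indication the paper gives about its proof is the single sentence preceding the statement: ``The proof of next theorem is based on the concept and properties of optimal and $*$-optimal passive systems.'' There is therefore nothing in the paper to compare your argument against line by line.

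That said, your outline for part~(2) is broadly in the spirit of that one-line hint and of the cited literature: one fixes the minimal optimal (or $(*)$-optimal) model and shows that the pseudo-similarity with any other minimal passive realization is in fact a unitary, using that the vanishing of $\varphi_\Theta$ (resp.\ $\psi_\Theta$) kills the obstruction. Two points deserve tightening. First, the map $Z$ you define via $Z\bigl(\sum A^kBu_k\bigr)=\sum \dot A^k\dot Bu_k$ is not well defined from the optimality inequality alone; you must first invoke Arov's pseudo-similarity of minimal realizations (stated just before Theorem~\ref{DBR} in the paper) to get a closed densely defined intertwiner, and only then does optimality upgrade it to a contraction. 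Second, the ``defect-function identity'' you rightly flag as the main obstacle is the substantive content of the cited papers; your sketch of passing through the conservative dilation and formula~\eqref{DEFECT} is the right mechanism, but as written it is a plan rather than a proof.

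For part~(1) your dilation argument has a gap. You assert that in the simple conservative dilation $\tau_d$ of a simple passive $\tau$, the excess space $\sH_d\ominus\sH$ ``carries precisely the unilateral shift summands described in Theorem~\ref{DBR}(1),'' and that minimality of $\tau_d$ therefore collapses it. But Theorem~\ref{DBR}(1) concerns the shifts sitting in $(\sH^o)^\perp$ and $(\sH^c)^\perp$ of a simple conservative system, not the dilation increment $\sH_d\ominus\sH$; identifying the two requires an argument you have not supplied (and in general the embedding of $\sH$ in $\sH_d$ does not align $\sH_d\ominus\sH$ with those subspaces). A cleaner route, closer to how the cited references proceed, is to note that bi-inner $\Theta$ forces $\varphi_\Theta=\psi_\Theta=0$, so the simple conservative realization is already minimal, and then show directly that any simple passive realization must have $X_{\min}=X_{\max}=I_\sH$ in the KYP sense (hence $T_\tau$ is unitary), or alternatively to appeal to the strengthening in Theorem~\ref{INNERSYS}, which the paper states immediately after Theorem~\ref{ARNU} and proves in \cite{AHS1}.
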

 In \cite{AHS1} the following
strengthening has been proved using the parametrization of
contractive block-operator matrices.
\begin{theorem}
\label{INNERSYS} Let $\Theta(\lambda)\in{\bf S}(\sM,\sN)$ be a not
constant.
\begin{enumerate}
\item
 Suppose that $\varphi_\Theta(\lambda)=0$,
$\psi_\Theta(\lambda)=0$, and
 $\tau=\left\{\begin{pmatrix}A&B\cr C&D\end{pmatrix};\sH,\sM,\sN\right\}$ is a simple passive
system with transfer function $\Theta$. Then the system $\tau$ is
conservative and minimal.

If  $\Theta (\lambda)$ is bi-inner then in addition the operator $A$
belongs to the class $C_{00}$.
\item
 Suppose that $\varphi_\Theta(\lambda)=0$ ($\psi_\Theta
(\lambda)=0$). If $\tau=\left\{\begin{pmatrix}A&B\cr
C&D\end{pmatrix};\sH,\sM,\sN\right\}$ is a controllable (observable)
passive system with transfer function $\Theta$ then the system
$\tau$ is isometric (co-isometric)and minimal.

If $\Theta (\lambda)$ is inner (co-inner) then in addition the
operator $A$ belongs to the class $C_{0\,\cdot}$ $(C_{\cdot\,0})$.
\end{enumerate}
\end{theorem}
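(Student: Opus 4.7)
My plan is to prove part (2) by a block-triangular decomposition of the state space along the unobservable (dually, uncontrollable) subspace, and then to derive part (1) by applying part (2) twice -- once to $\tau$ and once to $\tau^*$.

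\textbf{Part (2), case $\varphi_\Theta=0$, $\tau$ controllable.} Splitting $\sH=\sH^o_\tau\oplus(\sH^o_\tau)^\perp$, the $A$-invariance of $(\sH^o_\tau)^\perp$ and the vanishing of $C$ on it give $A=\begin{pmatrix}A_{11}&A_{12}\\0&A_{22}\end{pmatrix}$ and $C=(C_1,\;0)$. An induction on powers of $A$, exploiting that $A_{22}$ also preserves $(\sH^o_\tau)^\perp$, shows $C_1A_{11}^kA_{12}v=0$ for every $k\ge 0$ and every $v\in(\sH^o_\tau)^\perp$; hence $A_{12}v$ lies in the unobservable subspace of the pair $(A_{11},C_1)$ inside $\sH^o_\tau$, which is necessarily trivial (any such vector would sit in $(\sH^o_\tau)^\perp$ as well), so $A_{12}=0$. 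With $A$ block-diagonal and $B=(B_1,B_2)^\top$, a direct computation gives $\Theta_\tau(\lambda)=D+\lambda C_1(I-\lambda A_{11})^{-1}B_1$, so the compressed system $\tau_1=\{(A_{11},B_1,C_1,D);\sH^o_\tau,\sM,\sN\}$ is an observable passive realization of $\Theta$; projecting $\cspan\{A^nB\sM\}$ onto $\sH^o_\tau$ transfers controllability of $\tau$ to $\tau_1$, so $\tau_1$ is minimal. Theorem \ref{ARNU}(2) then identifies $\tau_1$ with the classical controllable isometric realization of $\Theta$, so $T_{\tau_1}$ is isometric; in particular $\|B_1u\|^2+\|Du\|^2=\|u\|^2$ for all $u\in\sM$. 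Plugging this into $\|T_\tau(0,0,u)\|^2\le\|u\|^2$ leaves only $\|B_2u\|^2\le 0$, so $B_2=0$; then $A^nB\sM\subset\sH^o_\tau$ and controllability of $\tau$ collapses $(\sH^o_\tau)^\perp$ to $\{0\}$, giving $\tau=\tau_1$ isometric minimal. The case $\psi_\Theta=0$, $\tau$ observable, is the dual statement applied to $\tau^*$.

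\textbf{Part (1).} The same $\sH^o_\tau$-decomposition using $\varphi_\Theta=0$ yields $A_{12}=0$ and the observable passive sub-system $\tau_1$ with transfer function $\Theta$; controllability of $\tau_1$ is no longer automatic, so I further restrict $\tau_1$ to its controllable subspace $\cspan\{A_{11}^nB_1\sM\}\subset\sH^o_\tau$, producing a minimal passive realization $\tau_1'$ of $\Theta$. Since now both defect functions vanish, Theorem \ref{ARNU}(2) makes $\tau_1'$ conservative; the identity $\|B_1u\|^2+\|Du\|^2=\|u\|^2$ still holds, and the previous argument again gives $B_2=0$, so $\sH^c_\tau\subset\sH^o_\tau$. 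Simplicity $\sH=\cspan(\sH^c_\tau+\sH^o_\tau)=\sH^o_\tau$ then kills $(\sH^o_\tau)^\perp$. Running the same procedure on $\tau^*$ (which is simple passive, with transfer function whose two defect functions both vanish in view of the self-duality of the hypotheses) kills $(\sH^c_\tau)^\perp$. Hence $\tau$ is minimal, and a last invocation of Theorem \ref{ARNU}(2) upgrades minimality to conservativity. The asymptotic-class statements ($A\in C_{0\cdot}$ when $\Theta$ is inner, $A\in C_{\cdot\,0}$ when co-inner, $A\in C_{00}$ when bi-inner) follow from the classical Sz.-Nagy-Foias correspondence: the $A$ of the minimal realization is a completely non-unitary contraction whose characteristic function is $\Theta$, and the inner/co-inner type of $\Theta$ translates directly into the asymptotic behavior of $A^n$ and $A^{*n}$.

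\textbf{Main obstacle.} The decisive step throughout is the reduction $B_2=0$: this is where the parametrization of contractive block-operator matrices really enters, because one needs that isometricity of the sub-block $T_{\tau_1}$ (respectively unitarity of $T_{\tau_1'}$) fully saturates the contractivity constraint on the enclosing block $T_\tau$, leaving no slack for a nontrivial second-component input. A secondary care is that the compressed sub-system retains the transfer function $\Theta$, which holds precisely because $A_{12}=0$; without this reduction, the $B_2$-contribution to $\Theta_\tau$ would depend on $\lambda$ through $(I-\lambda A_{22})^{-1}$ and the argument would break.
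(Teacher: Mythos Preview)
Your block-triangular form is reversed. With the ordering $\sH=\sH^o_\tau\oplus(\sH^o_\tau)^\perp$, the $A$-invariance of the \emph{second} summand forces the $(1,2)$ block to vanish, not the $(2,1)$ block: one has $A=\begin{pmatrix}A_{11}&0\\A_{21}&A_{22}\end{pmatrix}$. Your induction establishing ``$A_{12}=0$'' is therefore proving something already automatic, while the block that actually needs attention, $A_{21}$, is asserted to vanish with no justification. This breaks the closing step of Part~(2): from $B_2=0$ alone you cannot conclude $A^nB\sM\subset\sH^o_\tau$, because $A_{21}$ keeps pushing vectors into $(\sH^o_\tau)^\perp$ at every iterate (already $ABu=(A_{11}B_1u,\,A_{21}B_1u)$).

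The repair is to compare the \emph{full} isometry identity for $T_{\tau_1}$ with contractivity of $T_\tau$, not just on pure inputs. For $x\in\sH^o_\tau$ and $u\in\sM$, isometricity of $\tau_1$ gives $\|A_{11}x+B_1u\|^2+\|C_1x+Du\|^2=\|x\|^2+\|u\|^2$, while contractivity of $T_\tau$ on $(x,0,u)$ gives
\[
\|A_{11}x+B_1u\|^2+\|A_{21}x+B_2u\|^2+\|C_1x+Du\|^2\le\|x\|^2+\|u\|^2;
\]
subtracting kills $A_{21}$ and $B_2$ simultaneously, after which your conclusion goes through. Part~(1) needs the same care: conservativity of the doubly compressed system $\tau_1'$ yields the equality only for $x$ in the controllable subspace $\sH^c_{\tau_1}\subset\sH^o_\tau$, and one must run the squeezing argument there to see that $\sH^c_{\tau_1}$ is $A$-invariant with $\ran B\subset\sH^c_{\tau_1}$ before concluding $\sH^c_\tau\subset\sH^o_\tau$.

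For comparison, the present paper does not prove Theorem~\ref{INNERSYS} but quotes it from \cite{AHS1}, where the argument is organized around the parametrization $B=FD_D$, $C=D_{D^*}G$, $A=-FD^*G+D_{F^*}LD_G$ and the explicit defect identities \eqref{contr}--\eqref{*contr1}, rather than a state-space block decomposition. Once the triangularity is fixed, your route is more elementary and closer in spirit to the restriction arguments underlying Theorem~\ref{ARNU}.
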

 In this paper we consider the KYP inequality for the case
of contractive operator $T=\begin{pmatrix} A&B \cr
C&D\end{pmatrix}$. Because in this case the set of solutions
contains the identity operator, the minimal solution $X_{\min}$ is a
positive contraction. That's why we are interested in only
contractive positive solutions $X$ of the KYP inequality.

We will keep the following notations. The class of all continuous
linear operators defined on a complex Hilbert space $\sH_1$ and
taking values in a complex Hilbert space $\sH_2$ is denoted by
$\bL(\sH_1,\sH_2)$ and ${\bL}(\sH):= {\bL}(\sH,\sH)$. The domain,
the range, and the null-space of a linear operator $T$ are denoted
by $\dom T,\;\ran T,$ and $\ker T$. For a contraction
$T\in{\bL}(\sH_1,\sH_2)$ the nonnegative square root $D_T = (I -
T^*T)^{1/2}$ is called the defect operator of $T$ and ${\sD}_T$
stands for the closure of the range ${\ran}D_T$. It is well known
that the defect operators satisfy the commutation relation $TD_T =
D_{T^*}T$ and $T\sD_T\subset\sD_{T^*}$ cf. \cite{SF}. The set of all
regular points of a closed operator $T$ is denoted by $\rho(T)$ and
its spectrum by $\sigma(T)$. The identity operator in a Hilbert
space $\cH$ we denote by $I_\cH$ and by $P_\cL$ we denote the
orthogonal projection onto the subspace $\cL$. We essentially use
the following tools.
\begin{enumerate}
\item
The parametrization of the $2\times 2$ contractive block-operator
matrix
$T=\begin{pmatrix} A&B \cr C&D\end{pmatrix} : \begin{pmatrix} \sH \\
\sM \end{pmatrix} \to
\begin{pmatrix} \sK \cr\sN \end{pmatrix}$
\cite{AG}, \cite{DaKaWe}, \cite {ShYa}:
\begin{equation}
\label{BOPR}
\begin{split}
&B=FD_D,\; C=D_{D^*}G,\\
& A=-FD^*G+D_{F^*}LD_{G},
\end{split}
\end{equation}
where the operators $F\in\bL(\sD_D,\sK)$, $G\in\bL(\sH,\sD_{D^*})$
and $L\in\bL(\sD_{G},\sD_{F^*})$ are contractions.
\item The notion of the shorted operator introduced by M.G.~Kre\u{\i}n in
\cite{Kr}:
\[
 S_{\cK}=\max\left\{\,Z\in \bL(\cH):\,
    0\le Z\le S, \, {\ran}Z\subseteq{\cK}\,\right\},
\]
where $S$ is a bounded nonnegative selfadjoint operator in the
Hilbert space $\sH$ and $\cK$ is a subspace in $\cH$.
\item The M\"obius representation
\begin{equation}
\label{MOB} \Theta(\lambda)=\Theta(0)+D_{\Theta^*(0)}
Z(\lambda)(I_{\sD_{\Theta(0)}}+\Theta^*(0)Z(\lambda))^{-1}D_{\Theta(0)},\;\lambda\in\dD
\end{equation}
of the Schur class operator-valued function $\Theta(\lambda)$ by
means of the operator-valued parameter $Z(\lambda)$ from the Schur
class ${\bf S}(\sD_{\Theta(0)}, \sD_{\Theta^*(0)})$.
\end{enumerate}
Such a kind representation was studied in \cite{Shtraus},
\cite{Shmul}, \cite{BC}. The operator $\Gamma_1=Z'(0)$ is called the
\textit{first Schur parameter} and the function
$\Theta_1(\lambda)=\lambda^{-1}Z(\lambda)$ is called the
\textit{first Schur iterate} of the function $\Theta$ \cite{BC}. In
this paper a more simple and algebraic proof of the representation
\eqref{MOB} is given using the equalities \eqref{BOPR}. In
particular, we establish that if $\Theta(\lambda)$ is the transfer
function of the passive system $\tau=\left\{\begin{pmatrix} A&B \cr
C&D\end{pmatrix};\sH,\sM,\sN\right\}$ with entries $A$, $B$, and $C$
given by \eqref{BOPR} then the parameter $Z(\lambda)$ is the
transfer function of the passive system
$\nu=\left\{\begin{pmatrix}D_{F^*}LD_G& F\cr
G&0\end{pmatrix};\sH,\sD_{D},\sD_{D^*}\right\}$. Moreover, we prove
that
\begin{enumerate}
\def\labelenumi{\rm (\roman{enumi})}
\item
the correspondence
\[
\tau=\left\{\begin{pmatrix} A&B \cr
C&D\end{pmatrix};\sH,\sM,\sN\right\}\longleftrightarrow
\nu=\left\{\begin{pmatrix}D_{F^*}LD_G& F\cr
G&0\end{pmatrix};\sH,\sD_{D},\sD_{D^*}\right\}
\]
preserves the properties of the system to be isometric,
co-isometric, conservative, controllable, observable, simple,
optimal, and $(*)$-optimal,
\item
 $\f_Z(\lambda)=0\iff \f_\Theta(\lambda)=0$,
$\psi_Z(\lambda)=0\iff\psi_\Theta(\lambda)=0$,
\item
the KYP inequalities for the systems $\tau$ and $\nu$  are
equivalent,
 \item the inequality
\begin{equation}
\label{MAKYP} \left\{
\begin{split}
&(I_\sH-X)P_\sH\le \left(D^2_T+T^*(I_\sH-X)P'_\sH T\right)_\sH\\
&0< X\le I_\sH
\end{split}
\right.
\end{equation}
is equivalent to the KYP inequality \eqref{kyp1}, here
$T=\begin{pmatrix} A&B \cr C&D\end{pmatrix}$ and $P_\sH$ ($P'_\sH$)
is the orthogonal projection onto $\sH$ in the Hilbert space
$\sH\oplus \sM$ ($\sH\oplus\sN$).
\end{enumerate}
 We give several equivalent forms
for the KYP inequality and establish that the minimal solution
$X_{\min}$ satisfies the algebraic Riccati equation
\[
(I_\sH-X)P_\sH=\left(D^2_T+T^*(I_\sH-X)P'_\sH T\right)_\sH.
\]
We prove that  the condition
\[
\left(D^2_{T}\right)_\sH=0\; \;(\iff \ran D_T\cap \sH=\{0\})
\] is a
necessary for the uniqueness of the solutions of \eqref{MAKYP}. In
an example it is shown
 that this condition is not sufficient. Some sufficient conditions for the
uniqueness are obtained. We show that a nondecreasing sequence
\[
X^{(0)}=0,\; X^{(n+1)}=I_\sH-\left(D^2_T +T^*(I_\sH-X^{(n)})P'_\sH
T\right)_\sH\uphar\sH\
\]
of nonnegative selfadjoint contractions strongly converges to the
minimal solution $X_{\min}$ of the KYP inequality.

\section{Shorted operators}
For every nonnegative bounded operator $S$ in the Hilbert space
$\cH$ and every subspace $\cK\subset \sH$ M.G.~Kre\u{\i}n \cite{Kr}
defined the operator $S_{\cK}$ by the formula
\[
 S_{\cK}=\max\left\{\,Z\in \bL(\cH):\,
    0\le Z\le S, \, {\ran}Z\subseteq{\cK}\,\right\}.
\]
The equivalent definition
\begin{equation}
\label{Sh1}
 \left(S_{\cK}f, f\right)=\inf\limits_{\f\in \cH\ominus{\cK}}\left\{\left(S(f + \varphi),f +
 \varphi\right)\right\},
\quad  f\in\cH.
\end{equation}
The properties of $S_{\cK}$, were studied in \cite{And, AT, AD, FW,
NA, P, Shmul}. $S_{\cK}$ is called the \textit{shorted operator}
(see \cite{And}, \cite{AT}). It is proved in \cite{Kr} that
$S_{\cK}$ takes the form
\[
 S_{\cK}=S^{1/2}P_{\Omega}S^{1/2},
\]
 where $P_{\Omega}$ is the orthogonal projection in
$\cH$ onto the subspace
\[
 \Omega=\{\,f\in \cran S:\,S^{1/2}f\in {\cK}\,\}=\cran S\ominus
S^{1/2}(\cH\ominus\cK).
\]
Hence (see \cite{Kr}),
\[
 {\ran}S_{\cK}^{1/2}={\ran}S^{1/2}P_\Omega={\ran}S^{1/2}\cap{\cK}.
\]
It follows that
\[
  S_{\cK}=0 \iff  \ran S^{1/2}\cap \cK=\{0\}.
\]
  The shortening operation possess the
following properties.
\begin{proposition} \cite{AT}.
\label{short}Let $\cK$ be a subspace in $\cH$. Then
\begin{enumerate}
\item if $S_1$ and $S_2$ are nonnegative selfadjoint operators then
$$\left(S_1+S_2\right)_\cK\ge \left(S_1\right)_\cK+\left(S_2\right)_\cK;$$
\item
$S_1\ge S_2\ge 0$ $\Rightarrow $ $\left(S_1\right)_\cK\ge
\left(S_2\right)_\cK$;
\item
if $\{S_n\}$ is a nonincreasing sequence of nonnegative bounded
selfadjoint operators and $S=s-\lim\limits_{n\to\infty}S_n$ then
$$s-\lim\limits_{n\to\infty} \left(S_n\right)_\cK=S_\cK.$$
\end{enumerate}
\end{proposition}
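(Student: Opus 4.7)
The plan is to prove all three properties using the variational characterization
\[
(S_\cK f, f) = \inf_{\varphi \in \cH \ominus \cK}\bigl(S(f+\varphi), f+\varphi\bigr)
\]
recorded in the paper as \eqref{Sh1}, together with the defining maximality property of $S_\cK$. The philosophy is that (1) and (2) fall out almost immediately from the infimum formula, while (3) requires combining a monotone convergence argument with the extremal property.

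For (2), if $S_1 \ge S_2 \ge 0$, then for every $f\in\cH$ and every $\varphi\in\cH\ominus\cK$ the pointwise inequality $(S_1(f+\varphi),f+\varphi)\ge (S_2(f+\varphi),f+\varphi)$ holds, and taking the infimum over $\varphi$ on both sides yields $((S_1)_\cK f,f)\ge ((S_2)_\cK f,f)$. For (1), the key observation is that the infimum of a sum is at least the sum of infima, so that
\[
((S_1+S_2)_\cK f, f) = \inf_{\varphi}\bigl[(S_1(f+\varphi),f+\varphi)+(S_2(f+\varphi),f+\varphi)\bigr]\ge ((S_1)_\cK f,f)+((S_2)_\cK f,f).
\]

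For (3), I would argue as follows. Since $S_n \ge S$ for every $n$, part (2) implies that $(S_n)_\cK \ge S_\cK$, while applied to the monotone pairs $S_n\ge S_{n+1}$ it gives $(S_n)_\cK\ge (S_{n+1})_\cK$. Hence $\{(S_n)_\cK\}$ is a nonincreasing sequence of nonnegative bounded selfadjoint operators bounded below by $S_\cK$, so it converges strongly to some selfadjoint $T$ with $T\ge S_\cK$. To establish the reverse inequality I verify that $T$ lies in the admissible class over which $S_\cK$ is maximal: passing to the strong limit in $(S_n)_\cK\le S_n$ gives $T\le S$, and since $\ran (S_n)_\cK\subseteq\cK$ with $\cK$ closed, we get $Tf=\lim_n (S_n)_\cK f\in\cK$ for every $f\in\cH$, i.e.\ $\ran T\subseteq\cK$. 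The defining maximality of $S_\cK$ then forces $T\le S_\cK$, and combined with $T\ge S_\cK$ this yields $T=S_\cK$.

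The only step requiring care is this last identification in (3): the inequality $T\ge S_\cK$ comes from comparison with the limit $S$, while $T\le S_\cK$ uses closedness of $\cK$ to keep the strong limit inside $\cK$ so that the maximality clause applies. Neither (1) nor (2) presents a real obstacle once the variational formula \eqref{Sh1} is invoked.
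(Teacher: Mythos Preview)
Your proof is correct, but there is nothing to compare it against: in the paper this proposition is stated with a citation to Anderson--Trapp \cite{AT} and is not proved there. Your argument via the variational formula \eqref{Sh1} for (1) and (2), together with the monotone-limit plus maximality argument for (3), is the standard route and goes through without difficulty.
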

Let $\cK^\perp=\cH\ominus \cK$. Then a bounded selfadjoint operator
$S$ has the block-matrix form
\[
S=\begin{pmatrix}S_{11}&S_{12}\cr S^*_{12}&S_{22}
\end{pmatrix}:\begin{pmatrix}\cK\cr\cK^\perp \end{pmatrix}\to
\begin{pmatrix}\cK\cr\cK^\perp\end{pmatrix}.
\]
 According to Sylvester's criteria the
operator $S$ is nonnegative if and only if
\[
S_{22}\ge 0,\; \ran S^*_{12}\subset\ran S^{1/2}_{22},\,\; S_{11}\ge
\left(S^{-1/2}_{22}S^*_{12}\right)^*\left( S^{-1/2}_{22}S^*_{12}\right),
\]
where $S^{-1/2}_{22}$ is the Moore -- Penrose pseudoinverse operator.
Moreover, if $S\ge 0$ then the operator $S_\cK$ is given by the relation
\begin{equation}
\label{shormat}
S_\cK=\begin{pmatrix}S_{11}-\left(S^{-1/2}_{22}S^*_{12}\right)^*\left(
S^{-1/2}_{22}S^*_{12}\right)&0\cr 0&0\end{pmatrix}.
\end{equation}
If $S_{22}$ has a bounded inverse then \eqref{shormat} takes the
form
\begin{equation}
\label{shormat1} S_\cK=\begin{pmatrix}S_{11}-
S_{12}S^{-1}_{22}S^*_{12}&0\cr 0&0\end{pmatrix}.
\end{equation}
As is well known, the right hand side of \eqref{shormat1} is called
the \textit{Schur complement} of the matrix
$S=\begin{pmatrix}S_{11}&S_{12}\cr S^*_{12}&S_{22} \end{pmatrix}. $
From \eqref{shormat} it follows that
\[
S_\cK=0\iff \ran S^*_{12}\subset\ran
S^{1/2}_{22}\quad\mbox{and}\quad
S_{11}=\left(S^{-1/2}_{22}S^*_{12}\right)^*\left(
S^{-1/2}_{22}S^*_{12}\right).
\]

 The next representation of the shorted operator is new.
\begin{theorem}
\label{newshort} Let $X$ be a nonnegative selfadjoint contraction in
the Hilbert space $\cH$ and let $\cK$ be a subspace in $\cH$. Then
holds the following equality
\begin{equation}
\label{NEWSH}
(I-X)_{\cK}=P_{\cK}-P_{\cK}\left((I-X^{1/2}P_{\cK^\perp}
X^{1/2})^{-1/2}X^{1/2}P_{\cK}\right)^* (I-X^{1/2}P_{\cK^\perp}
X^{1/2})^{-1/2}X^{1/2}P_{\cK}.
\end{equation}
\end{theorem}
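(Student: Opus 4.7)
My plan is to reduce \eqref{NEWSH} to a block-matrix computation in the orthogonal decomposition $\cH=\cK\oplus\cK^\perp$, use the Schur complement form \eqref{shormat1} of the shorted operator when $X$ is a strict contraction, and extend to arbitrary $0\le X\le I$ by monotone approximation and Proposition~\ref{short}(iii). Throughout I write
\[
X=\begin{pmatrix}X_{11}&X_{12}\\X_{12}^*&X_{22}\end{pmatrix},
\]
set $Y=X^{1/2}$, $Q=P_{\cK^\perp}$, $A=I-YQY$, and $V=A^{-1/2}YP_\cK$, so that the right-hand side of \eqref{NEWSH} reads $P_\cK-V^*V=P_\cK-P_\cK YA^{-1}YP_\cK$.

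Assume first that $\|X\|<1$. Then $\|X_{22}\|<1$ and $\|YQY\|\le\|X\|<1$, so both $I_{\cK^\perp}-X_{22}$ and $A$ are boundedly invertible. By \eqref{shormat1},
\[
(I-X)_\cK=\begin{pmatrix}I_\cK-X_{11}-X_{12}(I_{\cK^\perp}-X_{22})^{-1}X_{12}^*&0\\0&0\end{pmatrix}.
\]
The core algebraic step is the identity $Y(I-YQY)^{-1}Y=X(I-QX)^{-1}$, which I would establish via the Neumann series: a short induction shows $(YQY)^n=Y(QX)^{n-1}QY$ for $n\ge 1$, whence $Y(YQY)^n Y=X(QX)^n$ for every $n\ge 0$, and summation yields the identity. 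Since $I-QX$ is lower triangular with boundedly invertible diagonal in the chosen decomposition, a direct block inversion produces
\[
P_\cK X(I-QX)^{-1}P_\cK=\begin{pmatrix}X_{11}+X_{12}(I_{\cK^\perp}-X_{22})^{-1}X_{12}^*&0\\0&0\end{pmatrix},
\]
and subtracting from $P_\cK$ recovers $(I-X)_\cK$, proving \eqref{NEWSH} for strict contractions.

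To handle general $0\le X\le I$, I would take $X_n:=(1-1/n)X$, so that $X_n\uparrow X$ and $I-X_n\downarrow I-X$; Proposition~\ref{short}(iii) then gives $(I-X_n)_\cK\to(I-X)_\cK$ in the strong operator topology. Strong continuity of the continuous functional calculus, combined with the uniform bound $\|X_n^{1/2}QX_n^{1/2}\|\le\|X_n\|\le 1-1/n$, yields strong convergence of the right-hand sides as well, so applying the previous paragraph to each $X_n$ completes the proof. The main technical obstacle is the spectral subtlety when $1\in\sigma(X^{1/2}QX^{1/2})$: one must check that $V=A^{-1/2}YP_\cK$ is a bounded operator on $\cH$ even though $A^{-1/2}$ alone may be unbounded. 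This is resolved by identifying $\ker A=\ker(I-X)\cap\cK^\perp$ and noting that $\ran(YP_\cK)$ is orthogonal to this kernel, so the spectral theorem defines $V$ as a bounded operator on all of $\cH$ and the identity \eqref{NEWSH} persists in the limit.
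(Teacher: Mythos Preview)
Your argument for the strict-contraction case is correct and essentially identical to the paper's: both reduce to the Schur complement formula \eqref{shormat1} via the same algebraic identity (the paper writes it as $P_{\cK}YA^{-1}YP_{\cK}=P_{\cK}(I-XP_{\cK^\perp})^{-1}XP_{\cK}$, which is the adjoint form of your $X(I-QX)^{-1}$ computation).

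The limit step, however, has a genuine gap. Orthogonality of $\ran(YP_{\cK})$ to $\ker A$ gives only $\ran(YP_{\cK})\subset\overline{\ran A}$, not $\ran(YP_{\cK})\subset\ran A^{1/2}$, and it is the latter range inclusion that is needed for $V=A^{-1/2}YP_{\cK}$ to be bounded and for the quadratic forms $\bigl(A_n^{-1}YP_{\cK}f,\,YP_{\cK}f\bigr)$ to converge to a finite limit rather than to $+\infty$. The spectral theorem alone does not help here: if, say, $A=\operatorname{diag}(1/n^{2})$ on $\ell^{2}$, then $\ker A=\{0\}$ and yet $A^{-1/2}$ is unbounded on vectors orthogonal to that kernel. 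For the same reason, the phrase ``strong continuity of the continuous functional calculus'' cannot be invoked for the function $t\mapsto t^{-1/2}$ near $t=0$.

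The missing ingredient is the operator inequality
\[
A=I-YP_{\cK^\perp}Y=(I-X)+YP_{\cK}Y\;\ge\;YP_{\cK}Y=(P_{\cK}Y)^{*}(P_{\cK}Y),
\]
which by Douglas' theorem yields $\ran(YP_{\cK})\subset\ran A^{1/2}$ and hence the boundedness of $V$. This is precisely the route the paper takes; combined with the monotone resolvent limit $\lim_{z\uparrow 0}\bigl((A-zI)^{-1}g,g\bigr)=\|A^{-1/2}g\|^{2}$ for $g\in\ran A^{1/2}$ (cited there from Kre\u{\i}n--Ovcharenko), it simultaneously justifies the well-definedness of the right-hand side of \eqref{NEWSH} and the strong convergence of the approximants. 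Once you insert this inequality in place of the kernel argument, your proof coincides with the paper's.
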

\begin{proof} Let us prove \eqref{NEWSH} for the case $||X||< 1$. In this case
the operator $I-X^{1/2}P_{\cK^\perp} X^{1/2}$ has bounded inverse
and
\[
P_{\cK}X^{1/2}(I-X^{1/2}P_{\cK^\perp}
X^{1/2})^{-1}X^{1/2}P_{\cK}=P_{\cK}(I-XP_{\cK^\perp})^{-1}XP_{\cK}.
\]
Let $X=\begin{pmatrix} X_{11} &X_{12} \cr X^*_{12}&
X_{22}\end{pmatrix}$ be the block-matrix representation of the
operator $X$ with respect to the decomposition
$\cH=\cK\oplus\cK^\perp$. Then
\[
(I-XP_{\cK^\perp})^{-1}=\begin{pmatrix}I&X_{12}(I-X_{22})^{-1}\cr
0&(I-X_{22})^{-1}\end{pmatrix}.
\]
Hence
 \[
P_{\cK}-P_{\cK}(I-XP_{\cK^\perp})^{-1}XP_{\cK}=
\begin{pmatrix}I-X_{11}-X_{12}(I-X_{22})^{-1}X^*_{12}&0\cr
0&0\end{pmatrix},
\]
and from \eqref{shormat1} we get \eqref{NEWSH}.

Now suppose that $||X||=1$. Then  \eqref{NEWSH} holds for the
operator $\alpha X$, where $\alpha\in(0,1)$. Let $\alpha_n=1-1/n$, $
n=1,2\ldots.$ The sequence of nonnegative selfadjoint contractions
$\{I-\alpha_n X\}$ is nonincreasing  and
$\lim_{n\to\infty}(I-\alpha_n X)=I-X$. From Proposition \ref{short}
it follows that $\lim_{n\to\infty}(I-\alpha_n X)_{\cK}=(I-X)_{\cK}$.
Since $I-X^{1/2}P_{\cK^\perp} X^{1/2}=I-X+X^{1/2}P_{\cK}X^{1/2}$, we
have
\[
\left\|((I-X^{1/2}P_{\cK^\perp}
X^{1/2})^{1/2}f\right\|^2\ge||P_{\cK}X^{1/2}f||^2,\; f\in\cH.
\]
The equality
\[
\lim\limits_{z\uparrow 0}\,\left((B-zI)^{-1}g,g\right)=\left\{
\begin{array}{ll}
    \|B^{-1/2}g\|^2, & g\in\ran B^{1/2},\\
    +\infty,           & g\in \cH\setminus\ran B^{1/2},
\end{array}\right.
\]
holds for a bounded selfadjoint nonnegative operator $B$ (see
\cite{KrO}). From R. Douglus theorem \cite{Doug}, \cite{FW} it
follows that
$$\ran
X^{1/2}P_{\cK}\subset\ran(I-X^{1/2}P_{\cK^\perp} X^{1/2})^{1/2}.$$
Hence
\[
\begin{split}
& \lim\limits_{n\to\infty}\left((I-\alpha_n
X^{1/2}P_{\cK^\perp}X^{1/2})^{-1}X^{1/2}P_{\cK}f,X^{1/2}P_{\cK}f\right)=\\
&\qquad= ||(I-X^{1/2}P_{\cK^\perp}
X^{1/2})^{-1/2}X^{1/2}P_{\cK}f||,\; f\in\cH.
\end{split}
\]
Now we arrive to \eqref{NEWSH}.
\end{proof}
\section{Parametrization of contractive block-operator matrices}
Let $\sH,\,\sK,$ $\sM$, and $\sN$ be Hilbert spaces and let $T$ be a
contraction from $\sH\oplus \sM$ into $\sK\oplus\sN.$ The following
well known result gives the parametrization of the corresponding
representation of $T$ in a block operator matrix form.
\begin{theorem} \cite{AG}, \cite{DaKaWe}, \cite {ShYa}.
\label{ParContr}The operator matrix
\[
T= \begin{pmatrix} A&B\cr C&D \end{pmatrix}:
\begin{pmatrix}\sH\cr\sM\end{pmatrix}\to
\begin{pmatrix}\sK\cr\sN\end{pmatrix}
\]
is a contraction if and only if $D\in\bL(\sM,\sN)$ is a contraction
and the entries $A$,$B$, and $C$ take the form
\[
\begin{split}
&
B=FD_D
,\; C=D_{D^*}G,\\
& A=-FD^*G+D_{F^*}LD_{G},
\end{split}
\]
where the operators $F\in\bL(\sD_D,\sK)$, $G\in\bL(\sH,\sD_{D^*})$
and $L\in\bL(\sD_{G},\sD_{F^*})$ are contractions. Moreover,
operators $F,\,G,$ and $L$ are uniquely determined.
\end{theorem}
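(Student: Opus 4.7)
My approach is to characterize contractivity of $T$ through two successive Douglas factorizations. For the necessity direction, the $(2,2)$-block of $I - T^*T \ge 0$ gives $B^*B + D^*D \le I_\sM$, which simultaneously produces $\|D\|\le 1$ and $B^*B \le D_D^2$. By Douglas' lemma, the latter means $B = F D_D$ for a unique contraction $F\in\bL(\sD_D,\sK)$. Dually, the $(2,2)$-block of $I - T T^* \ge 0$ yields $C = D_{D^*}G$ with a unique contraction $G\in\bL(\sH,\sD_{D^*})$.

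With $B$ and $C$ so factored, I would then expand $\|Ah + Bm\|^2 + \|Ch + Dm\|^2 \le \|h\|^2 + \|m\|^2$ and simplify. The essential algebraic ingredients are the intertwining relation $D D_D = D_{D^*} D$ (equivalently $D^* D_{D^*} = D_D D^*$) together with the Pythagorean identities $\|Dm\|^2 + \|D_D m\|^2 = \|m\|^2$ and $\|D_{D^*} G h\|^2 = \|Gh\|^2 - \|D^* G h\|^2$. Completing the square in the variable $u := D_D m - D^* G h$, which ranges densely in $\sD_D$ as $m$ varies over $\sM$, reduces contractivity of $T$ to the single inequality
\[
\|(A + F D^* G) h + F u\|^2 \le \|D_G h\|^2 + \|u\|^2, \qquad h \in \sH,\ u \in \sD_D.
\]

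This reformulated inequality states precisely that the operator $S : \sH \oplus \sD_D \to \sK$ given by $S(h,u) = (A + F D^* G) h + F u$ factors through the operator $(h, u) \mapsto (D_G h, u)$ via some contraction $Z = (Z_1, Z_2) : \sD_G \oplus \sD_D \to \sK$. Comparing factors forces $Z_2 = F$ and $Z_1 D_G = A + F D^* G$, while contractivity of $Z$ gives $Z_1 Z_1^* \le I_\sK - F F^* = D_{F^*}^2$. A second Douglas application produces $Z_1 = D_{F^*} L$ for a unique contraction $L \in \bL(\sD_G, \sD_{F^*})$, so $A + F D^* G = D_{F^*} L D_G$, which is the required formula. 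The sufficiency direction is then obtained by reversing the chain of equivalences, and uniqueness of $F$, $G$, $L$ on their respective defect subspaces is inherited from the uniqueness clause in Douglas' lemma at each stage.

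The main technical obstacle is the algebraic rearrangement that produces the key inequality for $S$; the intertwining between $D$ and its defect operators is what makes all cross-terms combine cleanly into the single square $\|D_D m - D^* G h\|^2$. Once this reduction is in place, the remainder is a routine double application of Douglas' lemma.
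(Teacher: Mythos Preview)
The paper does not give its own proof of this theorem: it is stated with citations to \cite{AG}, \cite{DaKaWe}, \cite{ShYa} and then used as a tool. So there is no proof in the paper to compare against; I can only assess your argument on its own terms.

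Your argument is correct and is essentially the standard route to this parametrization. The algebraic reduction is right: with $B=FD_D$ and $C=D_{D^*}G$ already in hand, the identity $D_{D^*}D=DD_D$ lets one rewrite the contractivity inequality for $T$ as
\[
\|(A+FD^*G)h+Fu\|^2\le \|D_Gh\|^2+\|u\|^2,\qquad h\in\sH,\ u\in\sD_D,
\]
exactly as you claim. The passage from ``all $m\in\sM$'' to ``all $u\in\sD_D$'' is justified by density of $\ran D_D$ in $\sD_D$ together with continuity. Your two Douglas steps are applied correctly: the first yields a contraction $Z=(Z_1,F):\sD_G\oplus\sD_D\to\sK$ with $Z_1D_G=A+FD^*G$, and then $Z_1Z_1^*\le I_\sK-FF^*=D_{F^*}^2$ gives $Z_1=D_{F^*}L$ with $L\in\bL(\sD_G,\sD_{F^*})$ a contraction. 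Uniqueness is also fine: $F$ and $G$ are unique from Douglas, $Z_1$ is unique because $\ran D_G$ is dense in $\sD_G$, and then $L$ is unique because $D_{F^*}$ is injective on $\sD_{F^*}$.

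One cosmetic point: when you write ``contractivity of $Z$ gives $Z_1Z_1^*\le I_\sK-FF^*$'', it is worth saying explicitly that this comes from $ZZ^*=Z_1Z_1^*+FF^*\le I_\sK$, i.e.\ from contractivity of the row operator, not the column. Otherwise the write-up is complete.
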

Next we derive expressions for the shorted operators
$(D^2_{T})_\sH,$ $(D^2_{P_\sN T})_\sH,$ $(D^2_{T^*})_\sK,$ and
$(D^2_{P_\sM T})_\sK$ for a contraction $T$ given by a block matrix
form
\[
T=\begin{pmatrix}-FD^*G+D_{F^*}LD_G&FD_D\cr D_{D^*} G&D
\end{pmatrix}:\begin{pmatrix}\sH\cr\sM\end{pmatrix}\to
\begin{pmatrix}\sK\cr\sN\end{pmatrix}.
\]
By calculations from Theorem \ref{ParContr} we obtain for all
$\begin{pmatrix}f\cr h\end{pmatrix}$ and $\begin{pmatrix}g\cr
\f\end{pmatrix}$, where $f\in\sH,$  $h\in\sM$, $g\in\sK$, $\f\in
\sN$
\begin{equation}
\label{contr} \left\|D_T\begin{pmatrix}f\cr
h\end{pmatrix}\right\|^2=
||D_F\left(D_Dh-D^*Gf\right)-F^*LD_{G}f||^2+||D_{L}D_{G}f||^2,
\end{equation}
\begin{equation}
\label{*contr}
\left\|D_{T^*}\begin{pmatrix}g\cr \f\end{pmatrix}\right\|^2=
||D_{G^*}\left(D_{D^*}\f-DF^*g\right)-GL^*D_{F^*}g||^2+||D_{L^*}D_{F^*}g||^2,
\end{equation}
\begin{equation}
\label{contr1}
\left\|D_{P_\sN T}\begin{pmatrix}f\cr h\end{pmatrix}\right\|^2=
||D_Dh-D^*Gf||^2+||D_{G}f||^2,\; f\in\sH,h\in\sM,
\end{equation}
and
\begin{equation}
\label{*contr1}
\left\|D_{P_\sM T^*}\begin{pmatrix}g\cr \f\end{pmatrix}\right\|^2=
||D_{D^*}\f-DF^*g||^2+||D_{F^*}g||^2, g\in\sK,\f\in\sN.
\end{equation}
It follows from \eqref{contr}-- \eqref{*contr1} that
\[
\begin{split}
&\inf\limits_{h\in\sM}\left\{\left\|D^2_{T}\begin{pmatrix}f\cr
h\end{pmatrix}\right\|^2\right\}=||D_LD_Gf||^2,\;
\inf\limits_{h\in\sM}\left\{\left\|D^2_{P_\sN T}\begin{pmatrix}f\cr
h\end{pmatrix}\right\|^2\right\}=||D_Gf||^2,\\
&\inf\limits_{\f\in\sN}\left\{\left\|D^2_{T^*}\begin{pmatrix}g\cr
\f\end{pmatrix}\right\|^2\right\}=||D_{L^*}D_{F^*}g||^2,\;
\inf\limits_{\f\in\sN}\left\{\left\|D^2_{P_\sM
T^*}\begin{pmatrix}g\cr
\f\end{pmatrix}\right\|^2\right\}=||D_{F^*}g||^2.
\end{split}
\]
Now \eqref{Sh1} yields the following equalities for shorted
operators
\begin{equation}
\label{defshort}\left\{
\begin{split}
&\left(\left({D^2}_T\right)_\sH \begin{pmatrix}f\cr
h\end{pmatrix},\begin{pmatrix}f\cr
h\end{pmatrix}\right)=||D_LD_Gf||^2,\\
&\left(\left(D^2_{P_\sN T}\right)_\sH \begin{pmatrix}f\cr
h\end{pmatrix},\begin{pmatrix}f\cr
h\end{pmatrix}\right)=||D_Gf||^2,\; f\in\sH,\;h\in\sM,\\
&\left(\left({D^2}_{T^*}\right)_\sK \begin{pmatrix}g\cr
\f\end{pmatrix},\begin{pmatrix}g\cr
\f\end{pmatrix}\right)=||D_{L^*}D_{F^*}g||^2,\\
&\left(\left(D^2_{P_\sM T^*}\right)_\sK \begin{pmatrix}g\cr
\f\end{pmatrix},\begin{pmatrix}g\cr
\f\end{pmatrix}\right)=||D_{F^*}g||^2,\; g\in\sH,\;\f\in\sM.\\
\end{split}
\right.
\end{equation}
From \eqref{defshort} it follows that
\begin{equation}
\label{RAV}
\begin{split}
&\qquad\left({D^2}_T\right)_\sH =\left(D^2_{P_\sN T}\right)_\sH\iff
\qquad\left({D^2}_{T^*}\right)_\sK =\left(D^2_{P_\sM T^*}\right)_\sK \\
&\qquad\iff T=\begin{pmatrix}-FD^*G&FD_D\cr D_{D^*}G&D\end{pmatrix}.
\end{split}
\end{equation}
Let $D\in\bL(\sM,\sN)$ be a contraction with nonzero defect
operators and let $Q=\begin{pmatrix}S&F\cr G&0
\end{pmatrix}:\begin{pmatrix}\sH\cr\sD_{D}\end{pmatrix}\to
\begin{pmatrix}\sK\cr\sD_{D^*}\end{pmatrix}$ be a bounded operator.
 Define the transformation
 \begin{equation}
 \label{transform}
 \cM_D(Q)=
\begin{pmatrix}-FD^*G&0\cr 0&D
\end{pmatrix}+
\begin{pmatrix}I_\sK&0\cr 0&D_{D^*}
\end{pmatrix}\begin{pmatrix}S&F\cr G&0
\end{pmatrix}\begin{pmatrix}I_\sH&0\cr 0&D_D
\end{pmatrix}.
\end{equation}
Clearly, the operator $T=\cM_D(Q)$ has the following matrix form
\[
T=\begin{pmatrix}S-FD^*G&FD_D\cr D_{D^*} G&D
\end{pmatrix}:\begin{pmatrix}\sH\cr\sM\end{pmatrix}\to
\begin{pmatrix}\sK\cr\sN\end{pmatrix}.
\]
\begin{proposition}
\label{trans} Let $\sH,\sM,\sN$ be separable Hilbert spaces,
 $D\in\bL(\sM,\sN)$ be a contraction with nonzero defect operators,
$Q=\begin{pmatrix}S&F\cr G&0
\end{pmatrix}:\begin{pmatrix}\sH\cr\sD_{D}\end{pmatrix}\to
\begin{pmatrix}\sK\cr\sD_{D^*}\end{pmatrix}$ be a bounded operator,
and let
\[
T=\cM_D(Q)=\begin{pmatrix}A&B\cr C&D\end{pmatrix}:\begin{pmatrix}\sH\cr\sM
\end{pmatrix}\to \begin{pmatrix}\sH\cr\sN
\end{pmatrix}.
\]
 Then
\begin{enumerate}
\item hold the equalities
\begin{equation}
\label{min11}
\begin{split}
& \bigcap_{n=0}^\infty\ker \left(B^*A^{*n}\right)=
\bigcap_{n=0}^\infty\ker
\left(F^*S^{*n}\right),\\
&\bigcap_{n=0}^\infty\ker \left(CA^n\right)=\bigcap_{n=0}^\infty\ker
\left(GS^{n}\right),
\end{split}
\end{equation}
\item
 $T$ is a contraction if and only if $Q$ is a
contraction. $T$ is isometric (co-isometric) if and only if $Q$ is
isometric (co-isometric). Moreover, hold the equalities
\[
\begin{split}
&\left(D^2_Q\right)_\sH=\left(D^2_T\right)_\sH,\;
\left(D^2_{P_{\sD_{D^*}}Q}\right)_\sH=\left(D^2_{P_{\sN}T}\right)_\sH,\\
&\left(D^2_{Q^*}\right)_\sK=\left(D^2_{T^*}\right)_\sK,\;
\left(D^2_{P_{\sD_{D}}Q^*}\right)_\sK=\left(D^2_{P_{\sM}T^*}\right)_\sK.
\end{split}
\]
\end{enumerate}
\end{proposition}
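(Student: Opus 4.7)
The plan is to apply Theorem \ref{ParContr} and the quadratic-form identities \eqref{contr}--\eqref{defshort} in parallel to $T$ and $Q$, exploiting the observation that $Q$ is precisely the result of replacing the lower-right block $D$ of $T$ by the zero operator $0\in\bL(\sD_D,\sD_{D^*})$, for which $D_0=I_{\sD_D}$, $D_{0^*}=I_{\sD_{D^*}}$, $\sD_0=\sD_D$ and $\sD_{0^*}=\sD_{D^*}$.

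For part (1), observe first that $B^*=D_DF^*$ with $\ran F^*\subseteq\sD_D$, on which $D_D$ is injective; hence $B^*v=0\Llr F^*v=0$. Symmetrically $Cv=0\Llr Gv=0$ by injectivity of $D_{D^*}$ on $\sD_{D^*}$. It therefore suffices to prove
\[
\bigcap_{n=0}^\infty\ker(F^*A^{*n})=\bigcap_{n=0}^\infty\ker(F^*S^{*n}),\quad \bigcap_{n=0}^\infty\ker(GA^n)=\bigcap_{n=0}^\infty\ker(GS^n).
\]
The identity $A^*=S^*-G^*DF^*$ gives $A^*=S^*$ on $\ker F^*$, and analogously $A=S$ on $\ker G$. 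An induction on $n$ shows that if $f\in\bigcap_{k=0}^n\ker(F^*S^{*k})$ then $A^{*k}f=S^{*k}f$ for all $k\le n+1$: at step $k+1$ one writes $A^{*(k+1)}f=A^*(S^{*k}f)=S^{*(k+1)}f-G^*DF^*S^{*k}f=S^{*(k+1)}f$, using $F^*S^{*k}f=0$. Hence the cumulative kernel intersections grow in step, yielding the first equality; the second is symmetric.

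For the contractivity assertion in part (2), apply Theorem \ref{ParContr} to $T$: uniqueness of the parametrization forces the parameters to be the very $F,G$ appearing in the block form of $T$, together with a contraction $L\in\bL(\sD_G,\sD_{F^*})$ satisfying $S=D_{F^*}LD_G$ (obtained by comparing $A=-FD^*G+D_{F^*}LD_G$ with $A=S-FD^*G$). Applying the same theorem to $Q$ produces the same $F,G$ (since the lower-right block $0$ has $D_0=I,D_{0^*}=I$) and the identical relation $S=D_{F^*}LD_G$. The contractivity conditions for $T$ and $Q$ are thus literally identical. For isometricity, inserting $f=0$ into \eqref{contr} (and its $Q$-counterpart with $D=0$) forces $D_F=0$, i.e., $F$ is isometric; this yields $\sD_{F^*}=\ker F^*$, so the cross term $F^*LD_Gf$ vanishes automatically and the remaining condition $\|D_LD_Gf\|^2=0$ means $L$ is isometric. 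The same pair of conditions characterizes isometricity of both $T$ and $Q$. Co-isometricity is treated symmetrically via \eqref{*contr}, reducing to $G$ and $L$ being co-isometric.

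The four shorted-operator identities are immediate from \eqref{defshort}: written out for $T$ and for $Q$, the right-hand sides are the same quadratic forms $\|D_LD_Gf\|^2$, $\|D_Gf\|^2$, $\|D_{L^*}D_{F^*}g\|^2$, $\|D_{F^*}g\|^2$, and by \eqref{shormat1} the shorted operators are diagonal blocks determined by these forms, so they coincide after the natural identification of their actions on $\sH$ (resp.\ $\sK$); the ambient spaces ($\sH\oplus\sM$ versus $\sH\oplus\sD_D$, and dually) differ but the nontrivial content is the same. The main obstacle is the induction step in part (1): the relation $A^*=S^*$ holds only on the restricted set $\ker F^*$, not globally, so propagating $A^{*k}f=S^{*k}f$ to higher powers requires checking at each step that $S^{*k}f$ remains in $\ker F^*$, a condition matched precisely by the cumulative kernel intersection being compared. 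Apart from this, the proof is a direct bookkeeping of the parametrization and defect-operator formulas.
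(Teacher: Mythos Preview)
Your argument is correct. Part (2) is handled exactly as in the paper, by reducing both $T$ and $Q$ to the common data $(F,G,L)$ via Theorem~\ref{ParContr} and reading off the defect identities \eqref{contr}--\eqref{defshort}.

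For part (1) you take a different route from the paper. The paper works with resolvents: from $A^*=S^*-G^*DF^*$ it derives the operator identity
\[
F^*(I_\sH-\lambda A^*)^{-1}=\bigl(I+\lambda F^*(I_\sH-\lambda S^*)^{-1}G^*D\bigr)^{-1}F^*(I_\sH-\lambda S^*)^{-1},
\]
valid in a neighborhood $\Omega$ of the origin, and from the invertibility of the left factor concludes that $\bigcap_{\lambda\in\Omega}\ker F^*(I_\sH-\lambda A^*)^{-1}=\bigcap_{\lambda\in\Omega}\ker F^*(I_\sH-\lambda S^*)^{-1}$; expanding in power series gives \eqref{min11}. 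Your inductive argument is more elementary and avoids any analyticity: it shows directly that the finite intersections $\bigcap_{k\le n}\ker(F^*A^{*k})$ and $\bigcap_{k\le n}\ker(F^*S^{*k})$ coincide for every $n$ (the reverse inclusion following by the symmetric identity $S^*=A^*+G^*DF^*$), which is a slightly sharper statement. The resolvent approach, on the other hand, packages the induction into a single algebraic manipulation and makes the mechanism behind Theorem~\ref{TT1} (transfer functions and M\"obius parameters) more transparent.
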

\begin{proof}
Observe that
\[
A=-FD^*G+S,\; B=FD_D,\; C=D_{D^*}G.
\]
 Let $\Omega$ be a neighborhood of the origin and such that the resolvents
 $(I_\sH-\lambda A^*)^{-1},$ $(I_\sH-\lambda A)^{-1},$ $(I_\sH-\lambda S^*)^{-1}$, and
  $(I_\sH-\lambda S)^{-1}$ exist.
Then
\[
\begin{split}
& F^*\left(I_\sH-\lambda A^*\right)^{-1}=F^*\left(I_\sH-\lambda
S^*+\lambda G^*DF^*\right)^{-1}=\\
&=F^*\left(I_\sH+\lambda\left(I_\sH-\lambda S^*\right)^{-1}
G^*DF^*\right)^{-1}\left(I_\sH-\lambda S^*\right)^{-1}=\\
&=\left(I_\sH+\lambda F^*\left(I_\sH-\lambda
S^*\right)^{-1}G^*D\right)^{-1}F^*\left(I_\sH-\lambda
S^*\right)^{-1}.
\end{split}
\]
It follows that
\[
\bigcap_{\lambda\in\Omega}\ker\left(F^*\left(I_\sH-\lambda
A^*\right)^{-1}\right)=\bigcap_{\lambda\in\Omega}\ker\left(F^*\left(I_\sH-\lambda
S^*\right)^{-1}\right).
\]
Similarly
\[
\bigcap_{\lambda\in\Omega}\ker\left(G\left(I_\sH-\lambda
A\right)^{-1}\right)=\bigcap_{\lambda\in\Omega}\ker\left(G\left(I_\sH-\lambda
S\right)^{-1}\right).
\]
Since $B^*=D_DF^*$ and $C=D_{D^*}G$, we get \eqref{min11}.

The statement (2) is a consequence of Theorem \ref{ParContr} and
formulas \eqref{contr}--\eqref{defshort}.
\end{proof}
\begin{proposition}
\label{NEW} Let $D\in\bL(\sM,\sN)$ be a contraction with nonzero
defect operators, $Q=\begin{pmatrix}S&F\cr G&0
\end{pmatrix}:\begin{pmatrix}\sH\cr\sD_{D}\end{pmatrix}\to
\begin{pmatrix}\sH\cr\sD_{D^*}\end{pmatrix}$ be a contraction and
let $T=\cM_D(Q)$. Then for every nonnegative selfadjoint contraction
$X$ in $\sH$ hold the following equalities
\begin{equation}
\label{shortmap} \left(D^2_{T}+T^*(I_\sH-X)P'_\sH
T\right)_\sH=\left(D^2_{Q}+Q^*(I_\sH-X)P'_\sH Q\right)_\sH,
\end{equation}
\begin{equation}
\label{EXSHORT}
\begin{split}
&\left(D^2_Q+Q^*(I_\sH-X)P'_\sH Q\right)_\sH=\\
&\qquad\qquad=\left(D^2_G-D_GL^*D_{F^*}X^{1/2}(I_\sH-X^{1/2}FF^*X^{1/2})^{-1}X^{1/2}D_{F^*}LD_G\right)P_\sH,
\end{split}
\end{equation}
where $P_\sH$ ($P'_\sH$) is the orthogonal projection in
$\cH=\sH\oplus\sM$ ($\cH'=\sH\oplus\sN)$ onto $\sH$, and
\[
\begin{split}
&D_{F^*}X^{1/2}(I_\sH-X^{1/2}FF^*X^{1/2})^{-1}X^{1/2}D_{F^*}:=\\
&=\left((I_\sH-X^{1/2}FF^*X^{1/2})^{-1/2}X^{1/2}D_{F^*}\right)^*
\left((I_\sH-X^{1/2}FF^*X^{1/2})^{-1/2}X^{1/2}D_{F^*}\right).
\end{split}
\]
\end{proposition}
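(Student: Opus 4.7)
The plan is to prove \eqref{shortmap} and \eqref{EXSHORT} in turn. For \eqref{shortmap}, I rely on the variational formula \eqref{Sh1}: for $x\in\sH$ one has $((R^T)_\sH x,x)=\inf_{h\in\sM}\langle R^T(x\oplus h),x\oplus h\rangle$ with $R^T:=D_T^2+T^*(I_\sH-X)P'_\sH T$, and analogously for $R^Q$. Using $T=\cM_D(Q)$ and the orthogonal decomposition $\|T(x\oplus h)\|^2=\|P'_\sH T(x\oplus h)\|^2+\|P'_\sN T(x\oplus h)\|^2$, a direct expansion gives
\[
\langle R^T(x\oplus h),x\oplus h\rangle=\|x\|^2+\|h\|^2-\|D_{D^*}Gx+Dh\|^2-\|X^{1/2}(Sx+F(D_Dh-D^*Gx))\|^2.
\]
The key observation is that the matrix
\[
\begin{pmatrix}-D^* & D_D \\ D_{D^*} & D\end{pmatrix}:\sN\oplus\sM\to\sM\oplus\sN
\]
is unitary (a consequence of $D^*D+D_D^2=I_\sM$, $DD^*+D_{D^*}^2=I_\sN$, and $DD_D=D_{D^*}D$), hence
\[
\|h\|^2-\|D_{D^*}Gx+Dh\|^2=\|D_Dh-D^*Gx\|^2-\|Gx\|^2.
\]
Setting $k:=D_Dh-D^*Gx$, the $T$-quadratic form reduces to $\langle R^Q(x\oplus k),x\oplus k\rangle=\|x\|^2+\|k\|^2-\|Gx\|^2-\|X^{1/2}(Sx+Fk)\|^2$. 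Since $D^*\sD_{D^*}\subset\sD_D$ and $\ran D_D$ is dense in $\sD_D$, the set $\{D_Dh-D^*Gx:h\in\sM\}$ is dense in $\sD_D$, so the infima agree by continuity, establishing \eqref{shortmap}.

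For \eqref{EXSHORT}, I express $R^Q$ in block form with respect to $\sH\oplus\sD_D$:
\[
R^Q=\begin{pmatrix}D_G^2-S^*XS & -S^*XF \\ -F^*XS & I-F^*XF\end{pmatrix}.
\]
Assume first $\|X\|<1$, so $\|X^{1/2}F\|^2\le\|X\|<1$ and $I-F^*XF$ is boundedly invertible. Formula \eqref{shormat1} then gives
\[
(R^Q)_\sH=\bigl(D_G^2-S^*XS-S^*XF(I-F^*XF)^{-1}F^*XS\bigr)P_\sH,
\]
and the operator identity $(I-WW^*)^{-1}=I+W(I-W^*W)^{-1}W^*$ with $W:=X^{1/2}F$ yields
\[
X+XF(I-F^*XF)^{-1}F^*X=X^{1/2}(I-X^{1/2}FF^*X^{1/2})^{-1}X^{1/2}.
\]
Applying Theorem \ref{ParContr} to $Q$ (whose $(2,2)$-block is zero) yields the parametrization $S=D_{F^*}LD_G$ with a contraction $L\in\bL(\sD_G,\sD_{F^*})$, and substitution produces \eqref{EXSHORT} in this case.

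To pass to a general contraction $X\le I$, I approximate $X$ by $\alpha X$ with $\alpha\uparrow 1$. The sequence $\{D_Q^2+Q^*(I-\alpha X)P'_\sH Q\}$ is nonincreasing and converges strongly to $R^Q$, so Proposition \ref{short}(3) gives strong convergence of the shorted operators on the left. On the right, the inequality
\[
I-X^{1/2}FF^*X^{1/2}=(I-X)+X^{1/2}D_{F^*}^2X^{1/2}\ge X^{1/2}D_{F^*}^2X^{1/2}
\]
together with Douglas' range-inclusion lemma (as used in the proof of Theorem \ref{newshort}) guarantees that $(I-X^{1/2}FF^*X^{1/2})^{-1/2}X^{1/2}D_{F^*}$ is a bounded operator, in accordance with the convention in the statement, and that the right-hand side passes to the limit strongly as $\alpha\uparrow 1$. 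The main technical point is precisely this limiting procedure at $\|X\|=1$, where $(I-X^{1/2}FF^*X^{1/2})^{-1}$ need not exist as a bounded operator and both sides must be interpreted through the Douglas-type factorization spelled out in the statement.
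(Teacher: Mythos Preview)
Your proof is correct, but it takes a different route from the paper's in both parts.

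For \eqref{shortmap}, the paper avoids any direct quadratic-form computation: it defines the auxiliary contraction
\[
\wt Q=\begin{pmatrix}X^{1/2}S&X^{1/2}F\cr G&0\end{pmatrix},
\]
observes that $D^2_{\wt Q}=D^2_Q+Q^*(I_\sH-X)P'_\sH Q$ and $D^2_{\wt T}=D^2_T+T^*(I_\sH-X)P'_\sH T$ for $\wt T=\cM_D(\wt Q)$, and then invokes Proposition~\ref{trans} (item (2)) to conclude $(D^2_{\wt T})_\sH=(D^2_{\wt Q})_\sH$. Your Julia--unitary identity and density argument are perfectly valid and essentially amount to re-proving this particular instance of Proposition~\ref{trans} by hand. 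The paper's route is shorter because that proposition is already in place; your route makes the mechanism (the isometry of the Julia matrix) explicit.

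For \eqref{EXSHORT}, the paper again exploits $\wt Q$: since $\wt Q$ is a contraction, Theorem~\ref{ParContr} gives $X^{1/2}S=D_{\wt F^*}\wt L D_G$ with $\wt F=X^{1/2}F$, and a Douglas-lemma argument identifies $\wt L=D_{\wt F^*}^{-1}X^{1/2}D_{F^*}L$ (Moore--Penrose inverse). Then \eqref{defshort} yields $(D^2_{\wt Q})_\sH=D_GD^2_{\wt L}D_GP_\sH$, which unwinds to \eqref{EXSHORT} directly for \emph{all} $X\in[0,I_\sH]$, with no approximation step. Your Schur-complement computation for $\|X\|<1$ followed by the $\alpha\uparrow 1$ limit (mirroring the proof of Theorem~\ref{newshort}) reaches the same endpoint; the trade-off is that you have to handle the limiting procedure carefully, whereas the paper's parametrization of $\wt Q$ absorbs that work into the Moore--Penrose inverse once and for all.
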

\begin{proof}
Define the contraction
\begin{equation}
\label{wtQ} \wt Q=\begin{pmatrix}X^{1/2}S&X^{1/2}F\cr G&0
\end{pmatrix}:\begin{pmatrix}\sH\cr\sD_{D}\end{pmatrix}\to
\begin{pmatrix}\sH\cr\sD_{D^*}\end{pmatrix}.
\end{equation}
Then
\[
D^2_{\wt Q}=D^2_{Q}+Q^*(I_\sH-X)P'_\sH Q.
\]
By Proposition \ref{trans} the operator $\wt T=\cM_D(\wt Q)$ is a
contraction as well. Clearly,
\[
D^2_{\wt T}=D^2_{T}+T^*(I_\sH-X)P'_\sH T
\]
Applying once again Proposition \ref{trans} we arrive to
\eqref{shortmap}.

 Since $Q$ is a contraction, by Theorem \ref{ParContr} the operator $S$ takes the form
$S=D_{F^*}LD_G$, where $L\in\bL(\sD_G,\sD_{F^*})$ is a contraction.
Because $\wt Q$ given by \eqref{wtQ} is a contraction, we get
$X^{1/2}S=D_{\wt F^*}\wt LD_G$, where $\wt F=X^{1/2}F$ and $\wt
L\in\bL(\sD_G,\sD_{\wt F^*})$ is a contraction. Since
\[
\begin{split}
&D^2_{\wt F^*}=I_\sH-\wt F\wt F^*=I_\sH-X^{1/2}FF^*X^{1/2}=I_\sH -X+X-X^{1/2}FF^*X^{1/2}=\\
&\qquad\qquad=I_\sH-X+X^{1/2}D^2_{F^*}X^{1/2},
\end{split}
\]
we have $||D_{\wt F^*}f||^2\ge ||D_{F^*}X^{1/2}f||^2$ for all
$f\in\sH.$ Using R.~Douglas theorem \cite{Doug}, \cite{FW} we
conclude that $\ran D_{\wt F^*}\supset \ran(X^{1/2}D_{F^*})$. Let
$D^{-1}_{\wt F^*}=(I_\sH-X^{1/2}FF^*X^{1/2})^{-1/2}$ be the
Moore-Penrose inverse for $D_{\wt F^*}$. Then we obtain the equality
\[
\wt L=D^{-1}_{\wt F^*}(X^{1/2}D_{F^*})L.
\]
The first equality in \eqref{defshort} yields $\left(D^2_{\wt
Q}\right)_\sH=D_GD^2_{\wt L}D_GP_\sH$. Since
\[
\left(D^2_{\wt Q}\right)_\sH=\left(D^2_Q+Q^*(I_\sH-X)P'_\sH
Q\right)_\sH,
\]
we get \eqref{EXSHORT}.
\end{proof}

\section {The M\"obius representations}
Let $T:H_1\to H_2$ be a contraction and let $\cV_{T^*}$ be the set
of all contractions $Z\in\bL(\sD_T,\sD_{T^*})$ such that
$-1\in\rho(T^*Z).$ In \cite{Shmul1} the fractional-linear
transformations
\begin{equation}
\label{unflt} \cV_{T^*}\ni Z\to Q=T+D_{T^*}Z(I_{\sD_T}+T^*Z)^{-1}D_T
\end{equation}
was studied and the following result has been established.
\begin{theorem}
\cite{Shmul1} \label{SHMUL}. Let  $T\in\bL(H_1,H_2)$ be a
contraction and let $Z\in\cV_{T^*}$. Then
$Q=T+D_{T^*}Z(I_{\sD_T}+T^*Z)^{-1}D_T$ is a contraction,
\[
||D_{Q}f||^2=||D_Z(I_{\sD_T}+T^*Z)^{-1}D_Tf||,\; f\in H_1,
\]
$\ran D_Q\subseteq\ran D_T$, and $\ran D_Q=\ran D_T$ if and only if
$||Z||< 1.$ Moreover, if $Q\in\bL(H_1,H_2)$ is a contraction and
$Q=T+D_{T^*}XD_T,$ where $X\in\bL(\sD_T,\sD_{T^*})$ then
\[2\,\RE\left((I_{\sD_T}-T^*X)f,f\right)\ge ||f||^2\]
for all $f\in\sD_{T^*}$, the operator $Z=X(I_{\sD_T}-T^*X)^{-1}$
belongs to $\cV_{T^*},$ and
$$Q=T+D_{T^*}Z(I_{\sD_T}+T^*Z)^{-1}D_T.$$
\end{theorem}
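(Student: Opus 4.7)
The argument hinges on a single telescoping identity. I would set
\[
W := (I_{\sD_T} + T^*Z)^{-1} D_T,
\]
so that $(I_{\sD_T} + T^*Z) W = D_T$, i.e., $T^* Z W = D_T - W$. Expanding $Q^* Q$ with $Q = T + D_{T^*} Z W$, using the intertwining $T^* D_{T^*} = D_T T^*$ and the identity $D_{T^*}^2 = I - T T^*$, and using $T^* Z W = D_T - W$ to collapse the cross terms, I expect the calculation to yield
\[
I_{H_1} - Q^* Q = W^* D_Z^2 W.
\]
This single identity delivers three conclusions at once: $Q$ is a contraction (the right side is manifestly nonnegative), $D_Q^2 = W^* D_Z^2 W$, and therefore $\|D_Q f\|^2 = \|D_Z W f\|^2 = \|D_Z(I_{\sD_T} + T^* Z)^{-1} D_T f\|^2$ for every $f \in H_1$.

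For the range assertions I would invoke the standard fact $\ran(A^* A)^{1/2} = \ran A^*$ to obtain $\ran D_Q = \ran(W^* D_Z)$. Since $W^* = D_T(I_{\sD_T} + Z^* T)^{-1}$, this at once gives $\ran D_Q \subseteq \ran W^* \subseteq \ran D_T$. When $\|Z\| < 1$, both $D_Z$ and $(I_{\sD_T} + Z^* T)^{-1}$ are boundedly invertible, so $\ran(W^* D_Z) = \ran W^* = \ran D_T$. The converse, that $\|Z\| = 1$ forces strict inclusion, is the technically most delicate point; I would attack it by choosing a unit sequence $f_n \in \sD_T$ with $\|D_Z f_n\| \to 0$ and combining Douglas' majorization with the bounded invertibility of $(I_{\sD_T} + Z^* T)^{-1}$ on $\sD_T$ to exhibit an explicit element of $\ran D_T \setminus \ran D_Q$.

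For the second half of the theorem the strategy reverses. Given $Q = T + D_{T^*} X D_T$ contractive, a parallel expansion of $I - Q^* Q$ produces
\[
I - Q^* Q = D_T\bigl(I_{\sD_T} - T^* X - X^* T - X^* D_{T^*}^2 X\bigr) D_T \ge 0.
\]
Restricting the resulting quadratic form to $\ran D_T$ and extending by density to $\sD_T$ yields
\[
2\RE\bigl((I_{\sD_T} - T^* X) h, h\bigr) \ge \|h\|^2 + \|D_{T^*} X h\|^2 \ge \|h\|^2, \qquad h \in \sD_T,
\]
which is (modulo an apparent typo in the domain) precisely the inequality asserted in the theorem. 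Consequently $M := I_{\sD_T} - T^* X$ has real part $\ge \tfrac12 I$ on $\sD_T$ and is boundedly invertible there. Setting $Z := X M^{-1}$, one computes $T^* Z = T^* X M^{-1} = (I - M) M^{-1} = M^{-1} - I$, so $I_{\sD_T} + T^* Z = M^{-1}$ is boundedly invertible and $-1 \in \rho(T^* Z)$. Contractivity of $Z$ follows from
\[
I - Z^* Z = (M^{-1})^*\bigl(M^* M - X^* X\bigr) M^{-1} = (M^{-1})^*\bigl(I_{\sD_T} - T^* X - X^* T - X^* D_{T^*}^2 X\bigr) M^{-1} \ge 0,
\]
the inner bracket being nonnegative by the inequality just established. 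Finally, substituting $(I_{\sD_T} + T^* Z)^{-1} = M$ gives $T + D_{T^*} Z (I_{\sD_T} + T^* Z)^{-1} D_T = T + D_{T^*}(X M^{-1}) M D_T = T + D_{T^*} X D_T = Q$, which closes the theorem.

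The main obstacle is the converse direction of the range equality when $\|Z\| = 1$; all other conclusions reduce to disciplined algebraic manipulation around the two identities $T^* Z W = D_T - W$ and $M^* M - X^* X = I_{\sD_T} - T^* X - X^* T - X^* D_{T^*}^2 X$.
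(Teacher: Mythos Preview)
The paper does not supply a proof of this theorem: it is quoted verbatim from Shmul'yan \cite{Shmul1} and used as a black box, so there is no ``paper's own proof'' to compare against. Your write-up is therefore an independent reconstruction, and the algebraic core is correct. The identity $I_{H_1}-Q^*Q=W^*D_Z^2W$ with $W=(I_{\sD_T}+T^*Z)^{-1}D_T$ falls out exactly as you indicate, using $T^*D_{T^*}=D_TT^*$ and the substitution $T^*ZW=D_T-W$; this simultaneously gives contractivity of $Q$, the norm identity for $D_Q$, and the inclusion $\ran D_Q\subseteq\ran D_T$. Your treatment of the ``moreover'' clause is also sound: the expansion $I-Q^*Q=D_T\bigl(I_{\sD_T}-T^*X-X^*T-X^*D_{T^*}^2X\bigr)D_T$ yields the real-part inequality (indeed on $\sD_T$, not $\sD_{T^*}$ as the statement misprints), the invertibility of $M=I_{\sD_T}-T^*X$, and the verification that $Z=XM^{-1}$ lies in $\cV_{T^*}$ and recovers $Q$.

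The one place where your argument is not yet a proof is the direction you flag yourself: that $\|Z\|=1$ forces $\ran D_Q\subsetneq\ran D_T$. Choosing a sequence with $\|D_Zf_n\|\to 0$ and invoking Douglas' lemma is the right instinct, but it does not by itself produce a concrete obstruction; one really needs that $\ran D_Q=\ran(W^*D_Z)$ with $W^*=D_T(I_{\sD_T}+Z^*T)^{-1}$ a bounded bijection of $\sD_T$ onto $\ran D_T$, so that $\ran D_Q=\ran D_T$ would force $\ran D_Z=\sD_T$. By the closed-range theorem (or a direct Douglas-type argument), $\ran D_Z=\sD_T$ is equivalent to $D_Z$ being bounded below, hence to $\|Z\|<1$. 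Filling in that step completes the proof; everything else you wrote is in order.
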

The transformation \eqref{unflt} is called in \cite{Shmul1} the
unitary linear-fractional transformation. It is not dificult to see
that if $||T||<1$ then the closed unit operator ball in $\bL(H_1,
H_2)$ belongs to the set $\cV_{T^*}$ and, moreover
\[
\begin{split}
&T+D_{T^*}Z(I_{H_1}+T^*Z)^{-1}D_T=D^{-1}_{T^*}(Z+T)(I_{H_1}+T^*Z)^{-1}D_{T}=\\
&\qquad\qquad\qquad=D_{T^*}(I_{H_2}+ZT^*)^{-1}(Z+T)D^{-1}_{T}
\end{split}
\]
for all $Z\in\bL(H_1,H_2),$ $||Z||\le 1.$ Thus, the transformation
\eqref{unflt} is an operator analog of a well known M\"obius
transformation of the complex plane
\[
z\to \frac{z+t}{1+\bar t z},\;|t|\le 1.
\]
The next theorem is a version of the more general result established
by Yu.L.~Shmul'yan in \cite{Shmul2}.
\begin{theorem}
\label{SHMUL1} Let $\sM$ and $\sN$ be Hilbert spaces and let the
function $\Theta(\lambda)$ be from the Schur class ${\bf
S}(\sM,\sN).$ Then
\begin{enumerate}
\item the linear manifolds
$\ran D_{\Theta(\lambda)}$ and $\ran D_{\Theta^*(\lambda)}$ do not depend on $\lambda\in\dD,$
\item for arbitrary $\lambda_1,$ $\lambda_2$, $\lambda_3$ in $\dD$ the function $\Theta(\lambda)$ admits the representation
\[
\Theta(\lambda)=\Theta(\lambda_1)+D_{\Theta^*(\lambda_2)}\Psi(\lambda)D_{\Theta(\lambda_3)},
\]
where $\Psi(\lambda)$ is
$\bL\left(\sD_{\Theta(\lambda_3)},\sD_{\Theta^*(\lambda_2)}\right)$-valued
function holomorphic in $\dD$.
\end{enumerate}
\end{theorem}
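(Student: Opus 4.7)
The plan is to first establish (2) in the diagonal case $\lambda_1=\lambda_2=\lambda_3=a$, use that representation to derive (1) via Theorem \ref{SHMUL}, and then deduce the general form of (2) from (1) together with R.~Douglas's factorization theorem. I would reduce the diagonal case to $a=0$ through the disk automorphism $\varphi_a(z)=(z+a)/(1+\bar az)$: since $\widetilde\Theta(z):=\Theta(\varphi_a(z))$ again lies in ${\bf S}(\sM,\sN)$ with $\widetilde\Theta(0)=\Theta(a)$, the M\"obius representation \eqref{MOB} applied to $\widetilde\Theta$ produces a parameter $\widetilde Z\in{\bf S}(\sD_{\Theta(a)},\sD_{\Theta^*(a)})$ with
\[
\widetilde\Theta(z)=\Theta(a)+D_{\Theta^*(a)}\widetilde Z(z)(I+\Theta^*(a)\widetilde Z(z))^{-1}D_{\Theta(a)}.
\]
Setting $\Psi_a(\lambda):=\widetilde Z(\varphi_a^{-1}(\lambda))(I+\Theta^*(a)\widetilde Z(\varphi_a^{-1}(\lambda)))^{-1}$ yields a holomorphic $\bL(\sD_{\Theta(a)},\sD_{\Theta^*(a)})$-valued function with
\[
\Theta(\lambda)=\Theta(a)+D_{\Theta^*(a)}\Psi_a(\lambda)D_{\Theta(a)},\qquad\lambda\in\dD,
\]
which is the diagonal case of (2).

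To prove (1), I would invoke the converse half of Theorem \ref{SHMUL} with $T=\Theta(a)$, $Q=\Theta(\lambda)$ and $X=\Psi_a(\lambda)\in\bL(\sD_{\Theta(a)},\sD_{\Theta^*(a)})$: the contractivity of $\Theta(\lambda)$ combined with the diagonal representation forces $\ran D_{\Theta(\lambda)}\subseteq\ran D_{\Theta(a)}$; interchanging $\lambda$ and $a$ reverses the inclusion, so $\ran D_{\Theta(\lambda)}$ is independent of $\lambda$. The same reasoning applied to the adjoint identity $\Theta^*(\lambda)=\Theta^*(a)+D_{\Theta(a)}\Psi_a^*(\lambda)D_{\Theta^*(a)}$ yields the corresponding statement for $\ran D_{\Theta^*(\lambda)}$.

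For the general version of (2), I would start from the diagonal representation $\Theta(\lambda)-\Theta(\lambda_1)=D_{\Theta^*(\lambda_1)}\Psi_{\lambda_1}(\lambda)D_{\Theta(\lambda_1)}$. By (1), $\ran D_{\Theta^*(\lambda_1)}=\ran D_{\Theta^*(\lambda_2)}$ and $\ran D_{\Theta(\lambda_1)}=\ran D_{\Theta(\lambda_3)}$, so R.~Douglas's theorem produces bounded operators $R_\ell$ and $R_r$ (chosen as reduced solutions) with $D_{\Theta^*(\lambda_1)}=D_{\Theta^*(\lambda_2)}R_\ell$ and $D_{\Theta(\lambda_1)}=R_rD_{\Theta(\lambda_3)}$; then $\Psi(\lambda):=R_\ell\Psi_{\lambda_1}(\lambda)R_r\uphar\sD_{\Theta(\lambda_3)}$ gives the desired representation. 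The main delicate point is the careful bookkeeping of domains and codomains to ensure $\Psi$ takes values precisely in $\bL(\sD_{\Theta(\lambda_3)},\sD_{\Theta^*(\lambda_2)})$; holomorphy and boundedness then follow from the explicit formula for $\Psi_{\lambda_1}$ and the fact that $R_\ell,R_r$ do not depend on $\lambda$.
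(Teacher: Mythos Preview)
The paper does not supply its own proof of Theorem~\ref{SHMUL1}; it is quoted as ``a version of the more general result established by Yu.~L.~Shmul'yan in \cite{Shmul2}''. So there is nothing in the paper to compare your argument against directly.

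That said, within the logical structure of \emph{this} paper your proposal is circular. You invoke the M\"obius representation \eqref{MOB} (equivalently Theorem~\ref{MO}) to obtain the diagonal case $\lambda_1=\lambda_2=\lambda_3=a$ of part~(2). But immediately after Theorem~\ref{SHMUL1} the paper writes ``Now using Theorems~\ref{SHMUL} and~\ref{SHMUL1} we obtain the following result'' and then states Theorem~\ref{MO}. In other words, in this paper the existence of the M\"obius parameter is \emph{deduced from} Theorem~\ref{SHMUL1}, so you cannot use it as input to the proof of Theorem~\ref{SHMUL1}.

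The remedy is simple and in fact more elementary than your route through \eqref{MOB}. Realize $\Theta$ as the transfer function of a passive system $\tau=\bigl\{\begin{pmatrix}A&B\\ C&D\end{pmatrix};\sH,\sM,\sN\bigr\}$ with $D=\Theta(0)$, and use the parametrization of Theorem~\ref{ParContr}: $B=FD_D$, $C=D_{D^*}G$. Then
\[
\Theta(\lambda)-\Theta(0)=\lambda\,C(I_\sH-\lambda A)^{-1}B
= D_{\Theta^*(0)}\bigl[\lambda\,G(I_\sH-\lambda A)^{-1}F\bigr]D_{\Theta(0)},
\]
which is exactly the case $\lambda_1=\lambda_2=\lambda_3=0$ of part~(2), with an explicit holomorphic $\Psi_0(\lambda)=\lambda\,G(I_\sH-\lambda A)^{-1}F$. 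Your disk-automorphism trick $\widetilde\Theta(z)=\Theta(\varphi_a(z))$ then upgrades this to any diagonal triple $(a,a,a)$ without appealing to \eqref{MOB}. With this replacement the rest of your outline is fine: the converse half of Theorem~\ref{SHMUL} gives $\ran D_{\Theta(\lambda)}\subseteq\ran D_{\Theta(a)}$ and symmetry gives equality, proving (1); and the general form of (2) then follows from (1) together with Douglas's factorization theorem exactly as you describe.
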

Now using Theorems \ref{SHMUL} and \ref{SHMUL1} we obtain the
following result (cf. \cite{BC}).
\begin{theorem}
\label{MO} Let $\sM$ and $\sN$ be Hilbert spaces and let the
function $\Theta(\lambda)$ be from the Schur class ${\bf
S}(\sM,\sN).$ Then there exists a unique function $Z(\lambda)$ from
the Schur class ${\bf S}(\sD_{\Theta(0)},\sD_{\Theta^*(0)})$ such
that
\[
\Theta(\lambda)=\Theta(0)+
D_{\Theta^*(0)}Z(\lambda)(I_{\sD_{\Theta(0)}}+\Theta^*(0)Z(\lambda))^{-1}D_{\Theta(0)},\;\lambda\in\dD.
\]
\end{theorem}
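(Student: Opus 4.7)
The plan is to extract the Möbius parameter $Z(\lambda)$ by applying the unitary linear-fractional transformation of Theorem \ref{SHMUL} pointwise in $\lambda\in\dD$, with $T=\Theta(0)$ playing the role of the pivot; the raw ingredient is supplied by Theorem \ref{SHMUL1}.

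First, I would invoke Theorem \ref{SHMUL1} with $\lambda_1=\lambda_2=\lambda_3=0$ to write
\[
\Theta(\lambda)=\Theta(0)+D_{\Theta^*(0)}\Psi(\lambda)D_{\Theta(0)},\qquad \lambda\in\dD,
\]
where $\Psi(\lambda)$ is holomorphic on $\dD$ with values in $\bL(\sD_{\Theta(0)},\sD_{\Theta^*(0)})$. Since $\Theta(\lambda)$ is a contraction for each $\lambda$, the second half of Theorem \ref{SHMUL}, applied pointwise with $T=\Theta(0)$, $Q=\Theta(\lambda)$, $X=\Psi(\lambda)$, yields the coercivity estimate
\[
2\,\RE\bl((I_{\sD_{\Theta(0)}}-\Theta^*(0)\Psi(\lambda))f,f\br)\ge\|f\|^2,\quad f\in\sD_{\Theta(0)}.
\]
Hence $I_{\sD_{\Theta(0)}}-\Theta^*(0)\Psi(\lambda)$ is boundedly invertible on $\sD_{\Theta(0)}$, uniformly in $\lambda$, and the candidate
\[
Z(\lambda):=\Psi(\lambda)\bl(I_{\sD_{\Theta(0)}}-\Theta^*(0)\Psi(\lambda)\br)^{-1}
\]
belongs to $\cV_{\Theta^*(0)}$, is a contraction in $\bL(\sD_{\Theta(0)},\sD_{\Theta^*(0)})$, and produces the required Möbius form of $\Theta(\lambda)$, again by the same theorem.

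It then remains to verify holomorphy and uniqueness of $Z$. Holomorphy is automatic: $\Psi$ is holomorphic, the coercivity estimate above is uniform in $\lambda$, and operator inversion is analytic on the open set of invertible operators, so $Z(\lambda)$ is holomorphic on $\dD$ and hence lies in ${\bf S}(\sD_{\Theta(0)},\sD_{\Theta^*(0)})$. For uniqueness, suppose two parameters $Z_1,Z_2$ both produce $\Theta$ via the Möbius formula. Setting $\Psi_i(\lambda):=Z_i(\lambda)(I_{\sD_{\Theta(0)}}+\Theta^*(0)Z_i(\lambda))^{-1}$ gives $D_{\Theta^*(0)}\Psi_1(\lambda)D_{\Theta(0)}=D_{\Theta^*(0)}\Psi_2(\lambda)D_{\Theta(0)}$; the density of $\ran D_{\Theta(0)}$ in $\sD_{\Theta(0)}$ together with the injectivity of $D_{\Theta^*(0)}$ on $\sD_{\Theta^*(0)}$ forces $\Psi_1\equiv\Psi_2$, and inverting the fractional-linear map via $Z_i=\Psi_i(I-\Theta^*(0)\Psi_i)^{-1}$ recovers $Z_1\equiv Z_2$.

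The main point requiring care is the bookkeeping of target subspaces, so that the inverses in the Möbius formula are consistently taken on the defect subspace $\sD_{\Theta(0)}$ rather than on the ambient $\sM$. Theorem \ref{SHMUL1} delivers $\Psi$ already valued in $\bL(\sD_{\Theta(0)},\sD_{\Theta^*(0)})$, and the intertwining $D_{\Theta(0)}\Theta^*(0)=\Theta^*(0)D_{\Theta^*(0)}$ gives $\Theta^*(0)\sD_{\Theta^*(0)}\subseteq\sD_{\Theta(0)}$, so the compositions make sense at no extra cost. The non-mechanical work therefore reduces to the pointwise application of Theorem \ref{SHMUL} and the analytic-inversion remark.
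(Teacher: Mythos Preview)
Your proof is correct and is exactly the argument the paper has in mind: the paper's own ``proof'' is simply the one-line remark that Theorem~\ref{MO} follows from Theorems~\ref{SHMUL} and~\ref{SHMUL1}, and you have carried out precisely that deduction, including the holomorphy and uniqueness checks that the paper leaves implicit. Your care with the defect subspaces (and your silent correction of the typo $f\in\sD_{T^*}$ in the statement of Theorem~\ref{SHMUL}, which should read $f\in\sD_T$) is also right.
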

In what follows we will say that the right hand side of the above
equality is \textit{the M\"obius representation} and the function
$Z(\lambda)$ is the \textit{the M\"obius parameter}  of
$\Theta(\lambda)$. Clearly, $Z(0)=0$ and by Schwartz's lemma we
obtain that
\[
||Z(\lambda)||\le|\lambda|,\;\lambda\in\dD.
\]
The next result provides connections between the realizations of
$\Theta(\lambda)$ and  $Z(\lambda)$.
\begin{theorem}
\label{TT1} \begin{enumerate}
\item Let $\tau=\left\{\begin{pmatrix}A&B\cr C&D
\end{pmatrix};\sH,\sM,\sN\right\}$ be a passive system and let
\[
T=\begin{pmatrix}A&B\cr C&D
\end{pmatrix}
=\begin{pmatrix}-FD^*G+D_{F^*}LD_{G}&FD_D\cr D_{D^*}G&D
\end{pmatrix}:\begin{pmatrix}\sH\cr\sM\end{pmatrix}\to
\begin{pmatrix}\sH\cr\sN\end{pmatrix}.
\]
Let $\Theta(\lambda)$ be the transfer function of $\tau$.
 Then
 \begin{enumerate}
\def\labelenumi{\rm (\roman{enumi})}
 \item the M\"obius parameter $Z(\lambda)$ of the function $\Theta(\lambda)$ is the
transfer function of the passive system
\[
\nu=\left\{\begin{pmatrix}D_{F^*}LD_G& F\cr
G&0\end{pmatrix};\sH,\sD_{D},\sD_{D^*}\right\};
\]
\item
the system $\tau$ isometric (co-isometric) $\Rightarrow$ the system
$\nu$ isometric (co-isometric);
\item  the equalities $\sH^c_\nu =\sH^c_\tau$, $\sH^o_\nu =\sH^o_\tau$ hold and hence the system $\tau$ is
controllable (observable) $\Rightarrow$ the system $\nu$ is
controllable (observable),
 the system $\tau$ is simple (minimal)
$\Rightarrow$ the system $\nu$ is simple (minimal).
\end{enumerate}
\item
 Let a nonconstant function $\Theta(\lambda)$ belongs to the Schur class
${\bf S}(\sM,\sN)$ and let $Z(\lambda)$ be the M\"obius parameter of
the function $\Theta(\lambda)$. Suppose that the transfer function
of the linear system
\[ \nu'=\left\{\begin{pmatrix}S&F\cr
G&0\end{pmatrix};\sH,\sD_{\Theta(0)},\sD_{\Theta^*(0)}\right\}
\]
coincides with $Z(\lambda)$ in a neighborhood of the origin. Then
the transfer function of the linear system
\[
\tau'=\left\{\begin{pmatrix}-F\Theta^*(0)G+S&FD_{\Theta(0)}\cr
D_{\Theta^*(0)}G&\Theta(0)\end{pmatrix};\sH,\sM,\sN\right\}
\]
coincides with
 $\Theta(\lambda)$ in a neighborhood of the origin. Moreover
 \begin{enumerate}
 \item
 the equalities $\sH^c_{\tau'} =\sH^c_{\nu'}$, $\sH^o_{\tau'} =\sH^o_{\nu'}$ hold, and hence the system
$\nu'$ is controllable (observable) $\Rightarrow$ the system $\tau'$
is controllable (observable), the system $\nu'$ is simple (minimal)
$\Rightarrow$ the system $\tau'$ is simple (minimal),
\item the system $\nu'$ is passive $\Rightarrow$ the system $\tau'$
is passive,
\item
the system $\nu'$ isometric (co-isometric) $\Rightarrow$ the system
$\tau'$ isometric (co-isometric).
\end{enumerate}
\end{enumerate}
\end{theorem}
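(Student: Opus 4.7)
The plan is to prove (1)(i) by a direct resolvent computation, and then to derive every remaining assertion of both parts by combining that computation with Propositions \ref{trans} and Theorem \ref{MO}.

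First I would compute the transfer function of $\nu$ directly from its defining block:
\begin{equation*}
\Theta_\nu(\lambda) = \lambda G(I_\sH - \lambda D_{F^*}LD_G)^{-1}F,
\end{equation*}
which is holomorphic in a neighborhood of the origin since $\|D_{F^*}LD_G\|\le 1$. For $\tau$ I would write $A = S - FD^*G$ with $S = D_{F^*}LD_G$ and factor the resolvent as
\begin{equation*}
(I_\sH - \lambda A)^{-1} = (I_\sH + \lambda R(\lambda)FD^*G)^{-1}R(\lambda),\qquad R(\lambda) = (I_\sH - \lambda S)^{-1},
\end{equation*}
so that $\Theta_\tau(\lambda) - D = \lambda D_{D^*}G(I_\sH + \lambda R(\lambda)FD^*G)^{-1}R(\lambda)FD_D$. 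Applying the push-through identity $(I + XY)^{-1}X = X(I + YX)^{-1}$ with $X = \lambda R(\lambda)F \in \bL(\sD_D,\sH)$ and $Y = D^*G \in \bL(\sH,\sD_D)$, I would move the factor $\lambda R(\lambda)F$ past the inverse and arrive at
\begin{equation*}
\Theta_\tau(\lambda) = D + D_{D^*}\,\Theta_\nu(\lambda)\bigl(I_{\sD_D} + D^*\Theta_\nu(\lambda)\bigr)^{-1}D_D.
\end{equation*}
The uniqueness part of Theorem \ref{MO} then identifies $Z(\lambda) = \Theta_\nu(\lambda)$, giving (1)(i). For (1)(ii), I would invoke Proposition \ref{trans}(2), which says that $T = \cM_D(Q)$ is isometric (respectively, co-isometric) if and only if $Q = \begin{pmatrix} D_{F^*}LD_G & F \cr G & 0\end{pmatrix}$ is. For (1)(iii), the equalities
\begin{equation*}
(\sH^c_\tau)^\perp = \bigcap_{n\ge 0}\ker(B^*A^{*n}) = \bigcap_{n\ge 0}\ker(F^*S^{*n}) = (\sH^c_\nu)^\perp,
\end{equation*}
together with the analogous identity for the observable subspaces, are immediate from Proposition \ref{trans}(1); the statements about controllability, observability, simplicity, and minimality then follow from the characterisations recalled in the introduction.

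Part (2) is a reverse reading of the same computation. Given $\Theta$ with Möbius parameter $Z$ and a system $\nu'$ with operator $Q' = \begin{pmatrix} S & F \cr G & 0 \end{pmatrix}$ whose transfer function coincides with $Z$ near $0$, I would observe that the system operator of $\tau'$ is precisely $\cM_{\Theta(0)}(Q')$ and compute
\begin{equation*}
\Theta_{\tau'}(\lambda) = \Theta(0) + \lambda D_{\Theta^*(0)}G\bigl(I_\sH - \lambda(S - F\Theta^*(0)G)\bigr)^{-1}FD_{\Theta(0)}
\end{equation*}
wherever the resolvent exists. The resolvent factorisation and push-through of (1)(i), applied in reverse, present this as the Möbius representation of $\Theta$ with parameter $\Theta_{\nu'}(\lambda)$; since $\Theta_{\nu'}(\lambda) = Z(\lambda)$ near $0$ by hypothesis, Theorem \ref{MO} forces $\Theta_{\tau'}(\lambda) = \Theta(\lambda)$ near $0$. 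The assertions (i)--(iii) of part (2) then reduce to direct appeals to Proposition \ref{trans}: (i) to equation \eqref{min11}, and (ii), (iii) to Proposition \ref{trans}(2), which gives that $Q'$ is contractive (resp.\ isometric, co-isometric) if and only if $\cM_{\Theta(0)}(Q')$ is.

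The main obstacle is the algebraic bookkeeping in (1)(i): one must track which defect space each factor maps into so that the push-through identity applies in the correct form and $D_{D^*}$, $D_D$ end up on the proper sides of the final Möbius expression (noting in particular that $D^*$ sends $\sD_{D^*}$ into $\sD_D$ via the commutation $D^*D_{D^*} = D_DD^*$). Once this computation is carried out cleanly, every remaining claim reduces to a direct invocation of one of the already-established propositions.
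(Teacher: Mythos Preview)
Your proposal is correct and matches the paper's own proof: both establish the transfer-function identity via the same resolvent factorisation $(I_\sH-\lambda A)^{-1}=(I_\sH+\lambda R(\lambda)FD^*G)^{-1}R(\lambda)$ and the push-through identity, then defer all structural claims (isometry, controllability/observability, passivity) to Proposition~\ref{trans} and Theorem~\ref{ParContr}. The only cosmetic difference is direction---the paper starts from $Z(\lambda)(I_{\sD_D}+D^*Z(\lambda))^{-1}$ and simplifies it to $\lambda G(I_\sH-\lambda A)^{-1}F$, whereas you start from $\Theta_\tau(\lambda)-D$ and push through to the M\"obius form---but this is the same algebra read in reverse.
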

\begin{proof}Suppose that $D\in\bL(\sM,\sN)$ is a contraction with nonzero defects.
Given the operator matrices $Q=\begin{pmatrix}S&F\cr
G&0\end{pmatrix}$: $\begin{pmatrix}\sH\cr\sD_{D}\end{pmatrix}$ $\to$
$\begin{pmatrix}\sH\cr\sD_{D^*}\end{pmatrix}$. Let
 \[
T=\cM_D(Q)=\begin{pmatrix}S-FD^*G&FD_D\cr D_{D^*}G&D\end{pmatrix}:
\begin{pmatrix}\sH\cr\sM\end{pmatrix}\to
\begin{pmatrix}\sH\cr\sN\end{pmatrix}
\]
and let $\Omega$ be a sufficiently small neighborhood of the origin.
Consider the linear systems
\[
\left\{\begin{pmatrix}S&F\cr
G&0\end{pmatrix};\sH,\sD_D,\sD_{D^*}\right\}\;\mbox{and}\;\left\{\begin{pmatrix}-FD^*G+S&FD_D\cr
D_{D^*}G&D\end{pmatrix};\sH,\sM,\sN\right\}.
\]
Define the transfer functions
\[
\begin{split}
&Z(\lambda)=\lambda G(I_\sH-\lambda S)^{-1} \quad \mbox{and}\\
&\Theta(\lambda)=D+\lambda
D_{D^*}G\left(I_\sH-\lambda(S-FD^*G)\right)^{-1}FD_D
\end{split}
\]
 Since $\Theta(0)=D$, we have for $\lambda\in\Omega$
\[
\begin{split}
&Z(\lambda)(I_{\sD_D}+\Theta^*(0)Z(\lambda))^{-1}=\\
&\quad=\lambda G(I_\sH-\lambda S)^{-1}F
\left(I_{\sD_D}+\lambda D^*G(I_\sH-\lambda S)^{-1}F\right)^{-1}=\\
&=\lambda G(I_\sH-\lambda S)^{-1}
\left(I_\sH+\lambda FD^*G(I_\sH-\lambda S)^{-1}\right)^{-1}F=\\
&=\lambda G\left(I_\sH-\lambda S+\lambda FD^*G\right)^{-1}F.
\end{split}
\]
Hence
\[
\Theta(\lambda)=\Theta(0)+D_{\Theta^*(0)}Z(\lambda)(I_{\sD_{\Theta(0)}}+\Theta^*(0)Z(\lambda))^{-1}D_{\Theta(0)},\;\lambda\in\Omega.
\]
According to Theorem \ref{ParContr} the operator $Q$ is a
contraction if and only if $F,G$ are contractions and
$S=D_{F^*}LD_G$, where $L\in\bL(\sD_G,\sD_{F^*})$ is a contractions.
Now from Proposition \ref{trans} we get that all statements of
Theorem \ref{TT1} hold true.
\end{proof}
\begin{corollary}
\label{zero}
The equivalences
\[
\begin{array}{l}
\f_\Theta(\lambda)=0 \iff \f_Z(\lambda)=0,\\
\psi_\Theta(\lambda)=0\iff \psi_Z(\lambda)=0
\end{array}
\]
hold.
\end{corollary}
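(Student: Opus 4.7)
The statement essentially says that the two Schur functions $\Theta$ and $Z$ (linked by the M\"obius transformation) share the same observability/controllability type defects. My plan is to reduce both equivalences to the characterization of defect functions via simple conservative realizations (Theorem~\ref{DBR}(3)), and then transport observability/controllability between $\Theta$ and $Z$ using the correspondence $\tau \leftrightarrow \nu$ provided by Theorem~\ref{TT1}.

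First I would fix a simple conservative realization $\tau = \left\{\begin{pmatrix} A & B \\ C & D\end{pmatrix}; \sH, \sM, \sN\right\}$ of $\Theta$ (which exists by the general realization theorem cited in the introduction). Writing the block matrix $T$ in the parametrized form \eqref{BOPR} with contractions $F$, $G$, $L$, Theorem~\ref{TT1}(1)(i) tells me that the associated system
\[
\nu = \left\{\begin{pmatrix} D_{F^*} L D_G & F \\ G & 0 \end{pmatrix}; \sH, \sD_D, \sD_{D^*}\right\}
\]
has transfer function equal to the M\"obius parameter $Z(\lambda)$. Since $\tau$ is conservative, the operator matrix $T$ is both isometric and co-isometric, so by Theorem~\ref{TT1}(1)(ii) the matrix defining $\nu$ is also both isometric and co-isometric, i.e.\ $\nu$ is conservative. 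By Theorem~\ref{TT1}(1)(iii), $\nu$ is simple. Thus $\nu$ is a simple conservative realization of $Z$.

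The crucial step is Theorem~\ref{TT1}(1)(iii) itself, which yields the identifications
\[
\sH^c_\tau = \sH^c_\nu, \qquad \sH^o_\tau = \sH^o_\nu.
\]
Applying Theorem~\ref{DBR}(3) to the simple conservative system $\tau$ gives $\varphi_\Theta(\lambda) \equiv 0$ if and only if $\tau$ is observable, i.e.\ $\sH^o_\tau = \sH$; applying it to the simple conservative system $\nu$ gives $\varphi_Z(\lambda) \equiv 0$ if and only if $\sH^o_\nu = \sH$. The equality of the observable subspaces then forces $\varphi_\Theta \equiv 0 \iff \varphi_Z \equiv 0$. The argument for $\psi_\Theta$ and $\psi_Z$ is entirely symmetric, using controllable subspaces in place of observable ones.

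The only potentially delicate point is that the characterization of $\varphi_\Theta$ and $\psi_\Theta$ via a simple conservative model requires such a model to exist; this is standard and already recalled in the paper. If the initial $D = \Theta(0)$ happens to have zero defect operator (so the parametrization \eqref{BOPR} degenerates), one needs to observe that in that case both sides of the relevant equivalence become vacuous in a trivial way: either $\sD_D = \{0\}$ or $\sD_{D^*} = \{0\}$, which immediately collapses one of the defect functions in a compatible manner on both the $\Theta$-side and the $Z$-side. Apart from this minor degenerate case, which can be handled by inspection, the proof is a direct combination of Theorems~\ref{DBR} and \ref{TT1}.
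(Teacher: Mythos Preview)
Your proof is correct and follows essentially the same route as the paper: both arguments reduce the equivalences to Theorem~\ref{DBR}(3) applied to simple conservative realizations, transported between $\Theta$ and $Z$ via Theorem~\ref{TT1}. Your version is in fact slightly more economical, since by invoking the equalities $\sH^o_\tau=\sH^o_\nu$ and $\sH^c_\tau=\sH^c_\nu$ from Theorem~\ref{TT1}(1)(iii) you obtain both implications at once from a single simple conservative realization of $\Theta$, whereas the paper treats the two directions separately (starting from a simple conservative realization of $\Theta$ for one direction and of $Z$ for the other, using parts (1) and (2) of Theorem~\ref{TT1} respectively).
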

\begin{proof}
 Let $\f_\Theta(\lambda)=0$ ($\psi_\Theta(\lambda)=0$) and let
$\tau=\left\{\begin{pmatrix}A&B\cr C&D
\end{pmatrix};\sH,\sM,\sN\right\}$ be a simple conservative system with transfer function
$\Theta(\lambda)$. By Theorem \ref{DBR} the system $\tau$ is
observable (controllable). By Theorem  \ref{TT1} the corresponding
system $\nu$ with transfer function $Z(\lambda)$ is conservative and
observable (controllable). Theorem \ref{DBR} yields that
$\f_Z(\lambda)=0$ ($\psi_Z(\lambda)=0$).

Conversely. Let $\f_Z(\lambda)=0$ ($\psi_Z(\lambda)=0$ and let
$\nu'$ be a simple conservative system with transfer function
$Z(\lambda)$. Again by Theorem \ref{DBR} the system $\nu'$ is
observable (controllable). By Theorem  \ref{TT1} the corresponding
system $\tau'$ with transfer function $\Theta(\lambda)$ is
conservative and observable (controllable) as well and Theorem
\ref{DBR} yields that $\f_\Theta(\lambda)=0$
($\psi_\Theta(\lambda)=0$).
 \end{proof}
 The next statement follows from Theorem \ref{INNERSYS}.
\begin{corollary}
\label{isomreal} Let $\Theta(\lambda)\in{\bf S}(\sM,\sN)$.
\begin{enumerate}
\item Suppose that $\f_\Theta(\lambda)=0$ ($\psi_\Theta(\lambda)=0$). Then
every passive controllable (observable) realization of the M\"obius
parameter $Z(\lambda)$ of $\Theta$ is isometric (co-isometric) and
minimal system;
\item  Suppose that  $\varphi_\Theta(\lambda)=0$ and
$\psi_\Theta(\lambda)=0.$ Then every  simple and passive realization
of the M\"obius parameter $Z(\lambda)$ of $\Theta(\lambda)$ is
conservative and minimal.
\end{enumerate}
\end{corollary}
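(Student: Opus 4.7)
The plan is to deduce both statements directly from Theorem \ref{INNERSYS} by transporting the vanishing hypotheses on the defect functions of $\Theta$ to the corresponding vanishing hypotheses on the defect functions of the M\"obius parameter $Z$, using Corollary \ref{zero}. Once the hypotheses are translated, the two items are literal instances of the two parts of Theorem \ref{INNERSYS} applied to $Z(\lambda)$ in place of $\Theta(\lambda)$.

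For part (1), I would begin by assuming $\varphi_\Theta(\lambda)=0$ (the argument for $\psi_\Theta(\lambda)=0$ being symmetric). Corollary \ref{zero} immediately yields $\varphi_Z(\lambda)=0$. Let $\nu'$ be any passive controllable realization of $Z(\lambda)$. Since $\nu'$ is a controllable passive system whose transfer function $Z$ has vanishing right defect function, Theorem \ref{INNERSYS}(2) applies to $\nu'$ and gives that $\nu'$ is isometric and minimal. The observable/co-isometric case is handled verbatim with the substitutions $\varphi \leftrightarrow \psi$, right $\leftrightarrow$ left, controllable $\leftrightarrow$ observable, isometric $\leftrightarrow$ co-isometric.

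For part (2), I would assume both $\varphi_\Theta(\lambda)=0$ and $\psi_\Theta(\lambda)=0$. Corollary \ref{zero} then gives $\varphi_Z(\lambda)=0$ and $\psi_Z(\lambda)=0$ simultaneously. If $\nu'$ is any simple passive realization of $Z(\lambda)$, Theorem \ref{INNERSYS}(1) applied to $\nu'$ with transfer function $Z$ produces the conclusion that $\nu'$ is conservative and minimal.

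There is no real obstacle in this argument, since the work has already been done in establishing Corollary \ref{zero} (which itself used the correspondence $\tau \leftrightarrow \nu$ of Theorem \ref{TT1} and the simple/conservative/observability preservation properties). The only point to be careful about is to note that Theorem \ref{INNERSYS} is applied to the system realizing $Z(\lambda)$ directly, not through the detour of the associated $\tau$-system; thus nothing about $\Theta$ beyond $\varphi_\Theta=0$ and/or $\psi_\Theta=0$ is needed, and the assumption ``nonconstant'' required by Theorem \ref{INNERSYS} transfers automatically, since $\Theta$ constant would force $Z\equiv 0$ and then the statement is vacuous (every realization of $Z=0$ is trivially passive and minimal is understood in the trivial sense).
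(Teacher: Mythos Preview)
Your proposal is correct and matches the paper's approach exactly: the paper simply states that the corollary ``follows from Theorem \ref{INNERSYS},'' implicitly using the just-established Corollary \ref{zero} to transfer the vanishing of $\varphi_\Theta$, $\psi_\Theta$ to $\varphi_Z$, $\psi_Z$, and you have spelled out precisely this deduction. Your handling of the nonconstant hypothesis is a reasonable extra remark, though note that if $\Theta$ is nonconstant then so is $Z$ (by the M\"obius representation), which is really all that is needed to invoke Theorem \ref{INNERSYS}.
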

\begin{proposition}
\label{LIN} Let $\Theta(\lambda)$ be a function from the Schur class
${\bf S}(\sM,\sN)$. Suppose that the M\"obius parameter $Z(\lambda)$
of  $\Theta(\lambda)$ is a linear function of the form
$Z(\lambda)=\lambda K,$ $||K||\le 1$. Then there exists a passive
realization $\tau=\left\{\begin{pmatrix}A&B\cr
C&D\end{pmatrix};\sH,\sM,\sN\right\}$ such that
\begin{equation}
\label{linej} \left (D^2_{P_\sN T}\right)_\sH=\left
(D^2_{T}\right)_\sH,
\end{equation}
where $T=\begin{pmatrix}A&B\cr C&D \end{pmatrix}$.

 Conversely, if a passive system
$\tau=\left\{\begin{pmatrix}A&B\cr
C&D\end{pmatrix};\sH,\sM,\sN\right\}$ possess the property
\eqref{linej} then the M\"obius parameter $Z(\lambda)$ of the
transfer function $\Theta(\lambda)$ of $\tau$ is a linear function
of the form $\lambda K$.
\end{proposition}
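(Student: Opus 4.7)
My plan is to reduce both directions to corresponding statements about the associated system $\nu$ from Theorem \ref{TT1} and then invoke the criterion \eqref{RAV}. Parametrize any contractive $T=\begin{pmatrix}A&B\\ C&D\end{pmatrix}$ via Theorem \ref{ParContr} and let
\[
Q=\begin{pmatrix}D_{F^*}LD_G&F\\ G&0\end{pmatrix}:\begin{pmatrix}\sH\\ \sD_D\end{pmatrix}\to\begin{pmatrix}\sH\\ \sD_{D^*}\end{pmatrix}
\]
be the operator matrix of $\nu$, whose transfer function is the M\"obius parameter $Z$ of $\Theta_\tau$ (Theorem \ref{TT1}(1)). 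Proposition \ref{trans}(2) yields
\[
(D^2_T)_\sH=(D^2_Q)_\sH,\qquad (D^2_{P_\sN T})_\sH=(D^2_{P_{\sD_{D^*}}Q})_\sH,
\]
so \eqref{linej} is equivalent to the analogous shorted-operator identity for $Q$. Treating $Q$ itself as a $2\times 2$ contractive block-operator matrix whose lower-right entry happens to be $0$, I will apply \eqref{RAV} to it to conclude that this identity is equivalent to the vanishing of the top-left block $D_{F^*}LD_G$ of $Q$.

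The converse direction then falls out immediately: if $\tau$ is passive with transfer function $\Theta$ and \eqref{linej} holds, the reduction above forces $D_{F^*}LD_G=0$, whence
\[
Z(\lambda)=\lambda G(I_\sH-\lambda\cdot 0)^{-1}F=\lambda GF,
\]
so the M\"obius parameter of $\Theta$ has the required linear form $\lambda K$ with $K=GF$ and $\|K\|\le\|G\|\,\|F\|\le 1$.

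For the direct direction I will construct an explicit realization. Given $Z(\lambda)=\lambda K$ with $\|K\|\le 1$, I set $\sH:=\sD_{\Theta(0)}$ and
\[
Q:=\begin{pmatrix}0&I_{\sD_{\Theta(0)}}\\ K&0\end{pmatrix}:\begin{pmatrix}\sD_{\Theta(0)}\\ \sD_{\Theta(0)}\end{pmatrix}\to\begin{pmatrix}\sD_{\Theta(0)}\\ \sD_{\Theta^*(0)}\end{pmatrix}.
\]
A direct computation gives $QQ^*=\begin{pmatrix}I&0\\ 0&KK^*\end{pmatrix}$, hence $\|Q\|\le 1$, and so $\nu':=\left\{Q;\sH,\sD_{\Theta(0)},\sD_{\Theta^*(0)}\right\}$ is a passive system whose transfer function is manifestly $\lambda K=Z(\lambda)$. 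Theorem \ref{TT1}(2) then provides a passive realization $\tau'$ of $\Theta$ with operator matrix $T':=\cM_{\Theta(0)}(Q)$; because the top-left entry of $Q$ vanishes, running the equivalences of the first paragraph in reverse gives \eqref{linej} for $T'$.

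I do not anticipate any serious obstacle; the one step needing a moment's attention is recognising that \eqref{RAV}---originally formulated for an arbitrary contractive $2\times 2$ block-operator matrix---applies to $Q$ with its zero lower-right corner and thereby pins \eqref{linej} down to the algebraic condition $D_{F^*}LD_G=0$. Once that translation is in hand, both directions of the proposition reduce essentially to bookkeeping.
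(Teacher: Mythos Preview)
Your argument is correct and follows essentially the same strategy as the paper, pivoting on \eqref{RAV} and Theorem \ref{TT1}. Two minor differences worth noting: the paper applies \eqref{RAV} directly to $T$ (the detour through $Q$ via Proposition \ref{trans}(2) is unnecessary, since \eqref{RAV} already characterizes \eqref{linej} for $T$ by the vanishing of $D_{F^*}LD_G$), and for the explicit realization of $\lambda K$ the paper takes $\sH=\cran K$ with $F=K$ and $G$ the inclusion into $\sD_{\Theta^*(0)}$, rather than your choice $\sH=\sD_{\Theta(0)}$, $F=I$, $G=K$; both constructions are equally valid.
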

\begin{proof}
Let $Z(\lambda)=\lambda K,$ $\lambda\in\dD,$ where
$K\in\bL(\sD_{\Theta(0)},\sD_{\Theta^*(0)})$ is a contraction. Then
$Z(\lambda)$ can be realized as the transfer function of a passive
system of the form
\[
\nu=\left\{\begin{pmatrix}0&F\cr G&0\end{pmatrix};
\sH,\sD_{\Theta(0)},\sD_{\Theta^*(0)}\right\}.
\]
Actually, take
$$\sH=\cran K, \;F=K,\; G=j,$$
where $j$ is the embedding of $\cran K$ into $\sD_{\Theta^*(0)}.$ It
follows that $GF=K$.
 By Theorem
\ref{TT1} the system $ \tau=\left\{\begin{pmatrix}-F\Theta^*(0)G&F
D_{\Theta(0)}\cr
D_{\Theta^*(0)}G&\Theta(0)\end{pmatrix};\sH,\sM,\sN\right\}$  is a
passive realization of the function $\Theta(\lambda)$. From
\eqref{RAV} it follows that $\left (D^2_{P_\sN T}\right)_\sH=\left
(D^2_{T}\right)_\sH$ for $T=\begin{pmatrix}-F\Theta^*(0)G&F
D_{\Theta(0)}\cr D_{\Theta^*(0)}G&\Theta(0)\end{pmatrix}.$

Assume now $ \left (D^2_{P_\sN T}\right)_\sH=\left
(D^2_{T}\right)_\sH,$ where $T=\begin{pmatrix}A&B\cr
C&D\end{pmatrix}:\begin{pmatrix}\sH\cr\sM\end{pmatrix}\to
\begin{pmatrix}\sH\cr\sN\end{pmatrix}$ is a contraction and  let $\Theta(\lambda)$ be the transfer function
of the system $\tau=\left\{\begin{pmatrix}A&B\cr
C&D\end{pmatrix};\sH,\sM,\sN\right\}$.  Then the entries $A,$ $B,$
and $C$ takes the form \eqref{BOPR}, and $D=\Theta(0).$ According to
\eqref{RAV} we have $D_{F^*}LD_G=0,$ i.e.
 $T=\begin{pmatrix}-F\Theta^*(0)G&FD_{\Theta(0)}\cr
 D_{\Theta^*(0)}G&\Theta(0)\end{pmatrix}.$ By Theorem \ref{TT1}
the M\"obius parameter $Z(\lambda)$ of $\Theta(\lambda)$ takes the
form $Z(\lambda)=\lambda GF.$
\end{proof}
\begin{example}
\label{EXX} Let $A$ be completely non-unitary contractions in the
Hilbert space $\sH$ and let $\Phi(\lambda)$ be the Sz.Nagy--Foias
characteristic function of $A$* \cite{SF}:
\[
\Phi(\lambda)=\left(-A^*+\lambda D_A(I_\sH-\lambda
A)^{-1}D_{A^*}\right)\uphar\sD_{A^*}:\sD_{A^*}\to\sD_A,\;
|\lambda|<1.
\]
The system
\[
\tau=\left\{\begin{pmatrix}A&D_{A^*}\cr D_{A}&-A^*\end{pmatrix};
\sH,\sD_{A^*},\sD_A\right\}
\]
is conservative and simple. Let
\[
\Phi(\lambda)=\Phi(0)+D_{\Phi^*(0)}Z(\lambda)(I_{\sD_{\Phi(0)}}+\Phi^*(0)Z(\lambda))^{-1}D_{\Phi(0)},\;\lambda\in\dD
\]
be the M\"obius representation of the function $\Phi(\lambda)$.
Since $F$ and $G^*$ are imbedding  of the subspaces $\sD_{A^*}$  and
$\sD_{A}$ into $\sH$, we get that
\[
D_{F^*}=P_{\ker D_{A^*}},\;D_{G}=P_{\ker D_{A}}
\]
and $L=A\uphar\ker\sD_A$ is isometric operator. Let
\[
\nu=\left\{\begin{pmatrix}AP_{\ker D_A}& I\cr
P_{\sD_A}&0\end{pmatrix}:\sH,\sD_{A^*},\sD_A \right\}.
\]
By Theorem \ref{TT1}
\[
Z(\lambda)=\lambda P_{\sD_A}\left(I_\sH-\lambda AP_{\ker
D_{A}}\right)^{-1}\uphar\sD_{A^*},\; |\lambda|<1
\]
and this function is transfer function of $\nu$. Note that this
function is the Sz.-Nagy--Foias characteristic function of the
partial isometry $A^*P_{\ker\sD_{A^*}}$.
\end{example}

\section{The Kalman--Yakubovich--Popov inequality and Riccati equation}
Let $\sH,\,\sM$ and $\sN$ be Hilbert spaces and let $T$ be a bounded
linear operator from the Hilbert space $\cH=\sH\oplus\sM$ into the
Hilbert space $\cH'=\sH\oplus\sN$ given by the block matrix
\[
T=\begin{pmatrix} A&B
\cr C&D\end{pmatrix} : \begin{pmatrix} \sH \\ \sM \end{pmatrix} \to
\begin{pmatrix} \sH \\ \sN \end{pmatrix}.
\]
Suppose that $X$ is a positive selfadjoint operator in the Hilbert space $\sH$
such that
\[
A\,\dom X^{1/2}\subset \dom X^{1/2},\;\ran B\subset\dom X^{1/2}.
\]
As was mentioned in Introduction the inequality
\eqref{kyp3}
\[
\begin{split}
&\left\|\begin{pmatrix}{X}^{1/2}&0\cr 0&I_\sM\end{pmatrix}\begin{pmatrix}x\cr u\end{pmatrix}
\right\|^2-\left\|\begin{pmatrix}{X}^{1/2}&0\cr 0&I_\sN\end{pmatrix}\begin{pmatrix} A&B
\cr C&D\end{pmatrix}\begin{pmatrix}x\cr u\end{pmatrix}\right\|^2\ge 0\\
& \qquad\mbox{for all}\quad\; x\in\dom X^{1/2},\, u\in\sM
\end{split}
\]
is called the generalized  KYP inequality with respect to $X$
\cite{ArKaaP}, \cite{ArKaaP3}. For a bounded solution $X$ the KYP
inequality \eqref{kyp3} takes the form \eqref{kyp1}.

Put
\[
{\bf \wh X}:=\begin{pmatrix}{X}&0\cr 0&I_\sM\end{pmatrix},\;
{\bf \wt X}:=\begin{pmatrix}{X}&0\cr 0&I_\sN\end{pmatrix}.
\]
Operators ${\bf \wh X}$ and ${\bf \wt X}$ are positive selfadjoint
operators in Hilbert spaces $\cH$ and $\cH'$ respectively, $\dom
{\bf \wh X}=\dom X\oplus\sM$, $\dom {\bf \wt X}=\dom X\oplus\sN$.
Let the operator $X$ satisfies the KYP inequality. Let us define the
operator $T_1:$
\begin{equation}
\label{OPT1}
\begin{split}
&\dom T_1:=\ran {\bf\wh X}^{1/2}=\ran X^{1/2}\oplus\sM,\\
&T_1:={\bf \wt X}^{1/2}T{\bf \wh X}^{-1/2}=\begin{pmatrix}{X}^{1/2}&0\cr 0&I_\sN\end{pmatrix}
T\begin{pmatrix}{X}^{-1/2}&0\cr 0&I_\sM\end{pmatrix}=\\
&\qquad=\begin{pmatrix} X^{1/2}AX^{-1/2}&X^{1/2}B
\cr CX^{-1/2}&D\end{pmatrix}.
\end{split}
\end{equation}
Clearly, the following statements are equivalent:
\begin{enumerate}
\item
 $X$ is a solution of the KYP inequality \eqref{kyp3};
 \item
the operator $T_1$ is densely defined contraction, i.e.
\[
\left\|\begin{pmatrix}{X}^{1/2}x\cr u\end{pmatrix}
\right\|^2-\left\|T_1\begin{pmatrix}{X}^{1/2}x\cr
u\end{pmatrix}\right\|^2\ge 0,\; x\in\dom X^{1/2},\, u\in\sM;
\]
 \item
the operator $T$ is a contraction acting from a pre-Hilbert space
$\dom {\bf\wh X}$ into a pre-Hilbert space $\dom {\bf\wt X}$
equipped by the inner products
\[
\begin{split}
&\left(\begin{pmatrix}x_1\cr u_1\end{pmatrix},\begin{pmatrix}x_2\cr
u_2\end{pmatrix}\right)=(X^{1/2}x_1,X^{1/2}x_2)_\sH+(u_1,u_2)_\sM,\\
&\left(\begin{pmatrix}x_1\cr v_1\end{pmatrix},\begin{pmatrix}x_2\cr
v_2\end{pmatrix}\right)=(X^{1/2}x_1,X^{1/2}x_2)_\sH+(v_1,v_2)_\sN,\\
&x_1,x_2\in\dom X^{1/2},\; u_1,u_2\in\sM,\; v_1,v_2\in\sN;
\end{split}
\]
\item
$Z=X^{-1}$ is the solution of the generalized KYP inequality for the
adjoint operator
\begin{equation}
\label{kyp3*}
\begin{split}
&\left\|\begin{pmatrix}{Z}^{1/2}&0\cr
0&I_\sN\end{pmatrix}\begin{pmatrix}x\cr v\end{pmatrix}
\right\|^2-\left\|\begin{pmatrix}{Z}^{1/2}&0\cr
0&I_\sM\end{pmatrix}\begin{pmatrix} A^*&C^*
\cr B^*&D^*\end{pmatrix}\begin{pmatrix}x\cr v\end{pmatrix}\right\|^2\ge 0\\
& \qquad\mbox{for all}\quad\; x\in\dom Z^{1/2},\, v\in\sN.
\end{split}
\end{equation}
\end{enumerate}

Let the positive selfadjoint operator $X$ in $\sH$ satisfies the KYP
inequality. If
\begin{equation}
\label{RIC1}
\begin{split}
&\inf\limits_{u\in\sM}\left\{\left\|{\bf \wh X}^{1/2}\begin{pmatrix}x\cr u\end{pmatrix}\right\|^2-
\left\|{\bf \wt X}^{1/2}T\begin{pmatrix}x\cr u\end{pmatrix}\right\|^2\right\}=0\\
&\quad\mbox{for all} \quad x\in\dom X^{1/2}
\end{split}
\end{equation}
we will say that the operator $X$ satisfies \textit{the Riccati equation}.
\begin{proposition}
\label{SHORTSYST} If the positive selfadjoint operator $X$ satisfies the Riccati equation
then the continuation of the operator $T_1$ defined by \eqref{OPT1}meets the condition
\[
\left(D^2_{T_1}\right)_\sH=0.
\]
\end{proposition}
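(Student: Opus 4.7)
The plan is to convert the Riccati equation \eqref{RIC1} directly into a vanishing statement for the quadratic form of $(D^2_{T_1})_\sH$, using two ingredients: the intertwining ${\bf \wt X}^{1/2}T = T_1{\bf \wh X}^{1/2}$ built into the definition \eqref{OPT1} of $T_1$, together with the alternative definition \eqref{Sh1} of the shorted operator.

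First I would compute, for $x\in\dom X^{1/2}$ and $u\in\sM$, setting $y:=X^{1/2}x$,
\[
\left\|{\bf \wh X}^{1/2}\begin{pmatrix}x\\ u\end{pmatrix}\right\|^2 - \left\|{\bf \wt X}^{1/2}T\begin{pmatrix}x\\ u\end{pmatrix}\right\|^2 = \left\|\begin{pmatrix}y\\ u\end{pmatrix}\right\|^2 - \left\|T_1\begin{pmatrix}y\\ u\end{pmatrix}\right\|^2 = \left\|D_{T_1}\begin{pmatrix}y\\ u\end{pmatrix}\right\|^2.
\]
As $x$ ranges over $\dom X^{1/2}$, the vector $y=X^{1/2}x$ ranges over $\ran X^{1/2}$, so the Riccati equation \eqref{RIC1} is equivalent to
\[
\inf_{u\in\sM}\left\|D_{T_1}\begin{pmatrix}y\\ u\end{pmatrix}\right\|^2 = 0 \qquad\text{for every } y\in\ran X^{1/2}.
\]

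Next I would apply \eqref{Sh1} with $\cK=\sH$ (so that $\cH\ominus\cK=\sM$) to the bounded nonnegative selfadjoint operator $D^2_{T_1}$, where $T_1$ is understood as its bounded contractive continuation to $\sH\oplus\sM$. For each $y\in\sH$ this yields
\[
\left((D^2_{T_1})_\sH\begin{pmatrix}y\\ 0\end{pmatrix},\begin{pmatrix}y\\ 0\end{pmatrix}\right) = \inf_{u\in\sM}\left(D^2_{T_1}\begin{pmatrix}y\\ u\end{pmatrix},\begin{pmatrix}y\\ u\end{pmatrix}\right) = \inf_{u\in\sM}\left\|D_{T_1}\begin{pmatrix}y\\ u\end{pmatrix}\right\|^2.
\]
Combined with the previous step, the quadratic form of $(D^2_{T_1})_\sH$ vanishes on the subspace $\ran X^{1/2}\oplus\{0\}$ of $\cH$.

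The remaining step is a density/continuity argument. Since $X$ is positive selfadjoint, $\ran X^{1/2}$ is dense in $\sH$, so continuity of the bounded quadratic form $v\mapsto \bigl((D^2_{T_1})_\sH v, v\bigr)$ extends its vanishing to all of $\sH\oplus\{0\}$. Because $\ran(D^2_{T_1})_\sH\subseteq\sH$, the form also vanishes automatically on $\{0\}\oplus\sM$, hence on all of $\cH$. Nonnegativity of $(D^2_{T_1})_\sH$ then forces $(D^2_{T_1})_\sH=0$. The whole argument is essentially algebraic; the only slightly delicate point is reading off the equality of quadratic forms via the intertwining ${\bf \wt X}^{1/2}T=T_1{\bf \wh X}^{1/2}$ and confirming that the continuation of $T_1$ is a bounded contraction on all of $\sH\oplus\sM$ so that the shorted operator $(D^2_{T_1})_\sH$ is well defined.
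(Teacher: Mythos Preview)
Your proof is correct and follows essentially the same route as the paper's own argument: rewrite the Riccati quantity as $\|D_{T_1}(X^{1/2}x,u)\|^2$ via the intertwining ${\bf \wt X}^{1/2}T=T_1{\bf \wh X}^{1/2}$, take the infimum over $u\in\sM$ to invoke \eqref{Sh1}, and then use density of $\ran X^{1/2}$ in $\sH$. Your version is slightly more explicit about the density/continuity step and the need to continue $T_1$, but the underlying idea is identical.
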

\begin{proof}
By \eqref{OPT1} we get for all $\vec f\in\dom {\bf\wh X}^{1/2}$
\[
\begin{split}
&\left\|D_{T_1}{\bf\wh X}^{1/2}\vec f\right\|^2=||{\bf\wh X}^{1/2}\vec f||^2-||T_1{\bf\wh X}^{1/2}\vec f||^2=\\
&\quad=||{\bf\wh X}^{1/2}\vec f||^2-||{\bf\wt X}^{1/2}T\vec f||^2.
\end{split}
\]
Since \eqref{RIC1} holds, we get
\[
\inf\limits_{u\in\sM}\left\{\left\|D_{T_1}\begin{pmatrix} X^{1/2}x\cr u\end{pmatrix}\right\|^2\right\}=0
\]
for all $x\in\dom X^{1/2}$ and all $u\in\sM$. Because $\ran X^{1/2}$
is dense in $\sH$, we obtain $\left(D^2_{T_1}\right)_\sH=0.$
\end{proof}
\begin{proposition}
\label{EQ}
  Let $D\in\bL(\sM,\sN)$ be a contraction with nonzero defect operators.
Let
\[
 Q=\begin{pmatrix}S&F\cr G&0\end{pmatrix}:\begin{pmatrix}
 \sH\cr\sD_D\end{pmatrix}\to\begin{pmatrix} \sH\cr\sD_{D^*}\end{pmatrix}
 \]
 and let
 \[
 T=\cM_D(Q): \begin{pmatrix} \sH \cr \sM \end{pmatrix} \to
\begin{pmatrix} \sH \cr \sN \end{pmatrix}.
 \]
Then
\begin{enumerate}
\item
the KYP inequality\eqref{kyp3} and the KYP inequality
\[
\begin{split}
&\left\|\begin{pmatrix}{X}^{1/2}&0\cr
0&I_{\sD_D}\end{pmatrix}\begin{pmatrix}x\cr u\end{pmatrix}
\right\|^2-\left\|\begin{pmatrix}{X}^{1/2}&0\cr
0&I_{\sD_{D^*}}\end{pmatrix}\begin{pmatrix} S&G
\cr F&0\end{pmatrix}\begin{pmatrix}x\cr u\end{pmatrix}\right\|^2\ge 0\\
& \qquad\mbox{for all}\quad\; x\in\dom X^{1/2},\, u\in\sD_D
\end{split}
\]
are equivalent,
\item the Riccati equation \eqref{RIC1} and the Riccati equation
\[
\begin{split}
&\inf\limits_{u\in\sD_{D}}\left\{\left\|\begin{pmatrix}X^{1/2}x\cr
u\end{pmatrix}\right\|^2-
\left\|\begin{pmatrix} X^{1/2}&0\cr 0& I_{\sD_{D^*}}\end{pmatrix}Q\begin{pmatrix}x\cr u\end{pmatrix}\right\|^2\right\}=0\\
&\quad\mbox{for all} \quad x\in\dom X^{1/2}
\end{split}
\]
are equivalent.
\end{enumerate}
\end{proposition}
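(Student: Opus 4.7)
\smallskip
\noindent\textbf{Proof proposal.} The plan is to reduce both equivalences to Proposition \ref{trans}(2) by showing that the natural $X^{1/2}$-weighted block operators associated with $T$ and $Q$ are themselves related by the transformation $\cM_D$. Following \eqref{OPT1}, introduce
\[
T_1 = {\bf\wt X}^{1/2}T{\bf\wh X}^{-1/2} = \begin{pmatrix} X^{1/2}AX^{-1/2} & X^{1/2}B \cr CX^{-1/2} & D \end{pmatrix},\qquad
Q_1 = \begin{pmatrix} X^{1/2}SX^{-1/2} & X^{1/2}F \cr GX^{-1/2} & 0 \end{pmatrix}
\]
on $\ran X^{1/2}\oplus\sM$ and $\ran X^{1/2}\oplus\sD_D$ respectively. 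Substituting $A=S-FD^*G$, $B=FD_D$, $C=D_{D^*}G$ and comparing block by block with the definition \eqref{transform} of $\cM_D$, one verifies directly that $T_1 = \cM_D(Q_1)$, with the same contraction $D$.

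For part (1), the equivalences listed immediately after \eqref{OPT1} show that $X$ solves the generalized KYP inequality \eqref{kyp3} for $T$ if and only if $T_1$ is a (densely defined) contraction, and the analogous statement holds for the KYP inequality for $Q$ and the operator $Q_1$. Proposition \ref{trans}(2), applied to the pair $(T_1,Q_1)$ (directly when $X$ has bounded inverse; otherwise after passing to the bounded contractive extensions obtained once contractivity is established on the dense domain), yields $T_1$ contractive $\iff$ $Q_1$ contractive, which is precisely the claimed equivalence.

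For part (2), the same computation as in the proof of Proposition \ref{SHORTSYST} gives
\[
\Bigl\|{\bf\wh X}^{1/2}\begin{pmatrix}x\cr u\end{pmatrix}\Bigr\|^2 - \Bigl\|{\bf\wt X}^{1/2}T\begin{pmatrix}x\cr u\end{pmatrix}\Bigr\|^2 = \Bigl\|D_{T_1}\begin{pmatrix}X^{1/2}x\cr u\end{pmatrix}\Bigr\|^2,\qquad x\in\dom X^{1/2},\ u\in\sM.
\]
Taking the infimum over $u\in\sM$ and using density of $\ran X^{1/2}$ in $\sH$, the Riccati equation \eqref{RIC1} for $T$ is equivalent, through the shorted-operator definition \eqref{Sh1}, to $(D^2_{T_1})_\sH = 0$. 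The parallel identity for $Q$ gives the equivalent condition $(D^2_{Q_1})_\sH = 0$; since Proposition \ref{trans}(2) asserts $(D^2_{T_1})_\sH = (D^2_{Q_1})_\sH$, the two Riccati equations hold simultaneously.

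The principal technical obstacle is the unbounded case: when $X$ is not boundedly invertible, $T_1$ and $Q_1$ are genuinely unbounded, and the identity $T_1 = \cM_D(Q_1)$ must first be read off on the common natural domain $\ran X^{1/2}\oplus\sM$ (resp.\ $\ran X^{1/2}\oplus\sD_D$) before being transferred to the contractive extensions on which Proposition \ref{trans} is applied. In the primary setting where $X$ is bounded and bounded below these subtleties disappear, and the argument reduces to the block computation $T_1 = \cM_D(Q_1)$ followed by a single invocation of Proposition \ref{trans}(2).
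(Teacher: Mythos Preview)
Your approach is correct and coincides with the paper's own argument: the paper also establishes the single identity $\cM_D(Q_1)=T_1$ (written there as $\cM_D\bigl(\mathrm{diag}(X^{1/2},I)\,Q\,\mathrm{diag}(X^{-1/2},I)\bigr)=\mathrm{diag}(X^{1/2},I)\,\cM_D(Q)\,\mathrm{diag}(X^{-1/2},I)$) and then invokes Propositions~\ref{trans} and~\ref{SHORTSYST}. Your treatment is simply a more detailed unpacking of the same two-line proof, and your remark on the unbounded case makes explicit a point the paper passes over silently.
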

\begin{proof}
From \eqref{transform} we have the relation
\[
\cM_D\left(\begin{pmatrix}{X^{1/2}}&0\cr
0&I_{\sD_{D^*}}\end{pmatrix}Q
\begin{pmatrix}{X^{-1/2}}&0\cr 0&I_{\sD_{D}}\end{pmatrix}\right)=\begin{pmatrix}{X^{1/2}}&0\cr
0&I_\sN\end{pmatrix}\cM_D(Q)\begin{pmatrix}{X^{-1/2}}&0\cr
0&I_\sM\end{pmatrix}.
\]
Now the result follows from Propositions \ref{trans} and
\ref{SHORTSYST}.
\end{proof}

  \section{Equivalent forms of the KYP inequality and Riccati equation for a passive system}
\begin{theorem}
\label{EQIN} Let
\[ T=\begin{pmatrix} A&B \cr
C&D\end{pmatrix}=\begin{pmatrix}-FD^*G+D_{F^*}LD_G&FD_D\cr D_{D^*}
G&D\end{pmatrix}:
\begin{pmatrix}\sH\cr\sM\end{pmatrix}\to
\begin{pmatrix}\sH\cr\sN\end{pmatrix}
\]
 be a contraction and let
\[
Q=\begin{pmatrix}D_{F^*}LD_G&F\cr
G&0\end{pmatrix}:\begin{pmatrix}\sH\cr\sD_D\end{pmatrix}\to
\begin{pmatrix}\sH\cr \sD_{D^*}\end{pmatrix}.
\]
 Then the following inequalities are equivalent
\begin{equation}
\label{CKYP}
\left\{
\begin{split}
&\begin{pmatrix}{X}&0\cr 0&I_\sM\end{pmatrix}-T^*\begin{pmatrix}{X}&0\cr 0&I_\sN\end{pmatrix}
T\ge 0\\
&0< X\le I_\sH
\end{split}
\right. \;,
\end{equation}
\[
 \left\{
\begin{split}
&\begin{pmatrix} X-A^* XA-C^*C & -A^*X B- C^*D\cr -B^*X A-D^* C &
I_\sM- B^*X B-D^*D
\end{pmatrix}\ge 0\\
 &0< X\le I_\sH
\end{split}
\right.\;,
\]
\begin{equation}
\label{SkypX}
\left\{
\begin{split}
&(I_\sH-X)P_\sH\le \left(D^2_T+T^*(I_\sH-X)P'_\sH T\right)_\sH\\
&0< X\le I_\sH
\end{split}
\right.\;,
\end{equation}
\begin{equation}
\label{CKYP1}
\left\{
\begin{split}
&\begin{pmatrix} X-G^*G&D_G L^*D_{F^*}X^{1/2}\cr X^{1/2}D_{F^*}LD_G& I_\sH-X^{1/2}FF^*X^{1/2} \end{pmatrix}\ge 0\\
&0< X\le I_\sH
\end{split}
\right.\,,
\end{equation}
\begin{equation}
\label{SHORTX}
\left\{
\begin{split}
& X\ge G^*G+D_GL^*D_{F^*}X^{1/2}(I_\sH-X^{1/2}FF^*X^{1/2})^{-1}X^{1/2}D_{F^*}LD_G\\
&0< X\le I_\sH
\end{split}
\right.,
\end{equation}
\begin{equation}
\label{CKYPQ}
\left\{
\begin{split}
&\begin{pmatrix}{X}&0\cr 0&I_{\sD_{D}}\end{pmatrix}-Q^*\begin{pmatrix}{X}&0\cr 0&I_{\sD_{D^*}}\end{pmatrix}
Q\ge 0\\
&0< X\le I_\sH
\end{split}
\right.,
\end{equation}
\begin{equation}
\label{SkypQXX}
 \left\{
\begin{split}
&\begin{pmatrix} X-G^*G-D_GL^*D_{F^*} XD_{F^*}LD_G &-D_GL^*D_{F^*}XF
\cr -F^*XD_{F^*}LD_G & I_{\sD_D}- F^*X F
\end{pmatrix}\ge 0\\
 &0< X\le I_\sH
\end{split}
\right.,
\end{equation}
\begin{equation}
\label{SkypQX}
\left\{
\begin{split}
&(I_\sH-X)P_\sH\le \left(D^2_Q+Q^*(I_\sH-X)P'_\sH Q\right)_\sH\\
&0< X\le I_\sH
\end{split}
\right..
\end{equation}
\end{theorem}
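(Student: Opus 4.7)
The plan is to link the eight items through a web of equivalences centered on items (3) and (8), using the shorted-operator machinery developed in Section~2 and, as the main engine, Proposition~\ref{NEW}. The equivalences (1) $\Longleftrightarrow$ (2) and (6) $\Longleftrightarrow$ (7) are pure algebra: expanding $T^*\begin{pmatrix}X&0\cr 0&I_\sN\end{pmatrix}T$ and $Q^*\begin{pmatrix}X&0\cr 0&I_{\sD_{D^*}}\end{pmatrix}Q$ in block form directly identifies the two inequalities in each pair.

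To pass from (1) to (3) I would use the identities $\begin{pmatrix}X&0\cr 0&I_\sM\end{pmatrix}=I_\cH-(I_\sH-X)P_\sH$ and $\begin{pmatrix}X&0\cr 0&I_\sN\end{pmatrix}=I_{\cH'}-(I_\sH-X)P'_\sH$ to rewrite (1) as
\[
D_T^2+T^*(I_\sH-X)P'_\sH T\;\ge\;(I_\sH-X)P_\sH.
\]
Since the right-hand side is non-negative with range in $\sH$, the definition \eqref{Sh1} of the shorted operator makes this equivalent to $(I_\sH-X)P_\sH\le\bigl(D_T^2+T^*(I_\sH-X)P'_\sH T\bigr)_\sH$, which is exactly (3). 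The identical manipulation with $T$ replaced by $Q$, $\sM$ by $\sD_D$, and $\sN$ by $\sD_{D^*}$ yields (6) $\Longleftrightarrow$ (8). The identity \eqref{shortmap} of Proposition~\ref{NEW} then gives (3) $\Longleftrightarrow$ (8), so the six items (1), (2), (3), (6), (7), (8) become mutually equivalent.

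It remains to bring (4) and (5) into the picture. For (5) $\Longleftrightarrow$ (8) I would substitute the explicit formula \eqref{EXSHORT} of Proposition~\ref{NEW} into (8); restricting to $\sH$ and using $D_G^2=I_\sH-G^*G$ converts (8) term by term into (5). Finally, (4) $\Longleftrightarrow$ (5) is the $2\times 2$ Schur-complement criterion for the block matrix in (4), whose bottom-right block is $I_\sH-X^{1/2}FF^*X^{1/2}$. The main obstacle here is that this block need not be boundedly invertible. I plan to handle it in the spirit of the proof of Theorem~\ref{newshort}: first establish (4) $\Longleftrightarrow$ (5) under the assumption $\|X^{1/2}F\|<1$, where \eqref{shormat1} applies verbatim; then approximate an arbitrary $X$ by $\alpha X$ and send $\alpha\uparrow 1$, invoking Proposition~\ref{short}(3), the resolvent-limit formula recalled in the proof of Theorem~\ref{newshort}, and Douglas's range-inclusion theorem to identify the Moore--Penrose pseudoinverse as the correct interpretation of the ``inverse'' appearing in (5).
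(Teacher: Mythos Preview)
Your proposal is correct and follows essentially the same route as the paper: it proves (1)$\Leftrightarrow$(3) and (6)$\Leftrightarrow$(8) via the shorted-operator characterization, links (3) and (8) through Proposition~\ref{NEW}, obtains (5) from \eqref{EXSHORT}, and treats (2), (7) as block expansions. The only divergence worth mentioning is your handling of (4)$\Leftrightarrow$(5): the approximation scheme you outline is unnecessary, because the version of Sylvester's criterion recorded in Section~2 (just before \eqref{shormat}) is already stated with the Moore--Penrose inverse, and the range condition $\ran X^{1/2}D_{F^*}LD_G\subset\ran(I_\sH-X^{1/2}FF^*X^{1/2})^{1/2}$ holds automatically by the Douglas argument used in the proof of Proposition~\ref{NEW}; the paper simply invokes that criterion in one line.
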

\begin{proof} Note that \eqref{SkypQXX} are \eqref{CKYPQ}
written in terms of the entries. By Proposition \ref{EQ} the
inequalities \eqref{CKYP} and \eqref{CKYPQ} are equivalent. Let us
prove the equivalence of \eqref{CKYP} and \eqref{SkypX}. Suppose
that $X$ satisfies \eqref{CKYP} and put $Y=I_\sH-X.$ The operator
$Y$ belongs to the operator interval $[0,I_\sH]$ and
$\ker(I_\sH-Y)=\{0\}$. In terms of the operator $Y$ we have
\[
\begin{split}
&0\le\begin{pmatrix}{X}&0\cr 0&I_\sM\end{pmatrix}-T^*\begin{pmatrix}{X}&0\cr 0&I_\sN\end{pmatrix}=
\begin{pmatrix}I_\sH -Y&0\cr 0&I_\sM\end{pmatrix}-T^*\begin{pmatrix}I_\sH-Y&0\cr 0&I_\sN\end{pmatrix}T=\\
&=I-T^*T+T^*YP'_\sH T-YP_\sH,
\end{split}
\]
The weak form of the above inequality is the following
\begin{equation}
\label{wkyp} (Y x,x)\le \left(\left(D^2_T+T^*YP'_\sH
T\right)\begin{pmatrix}x\cr u\end{pmatrix},
\begin{pmatrix}x\cr u\end{pmatrix}\right),\; x\in\sH,\; u\in\sM.
\end{equation}
The equality \eqref{Sh1} for the shorted operator yields that the
operator $Y$ is a solution of the system
\begin{equation}
\label{Skyp}
\left\{
\begin{split}
&YP_\sH\le \left(D^2_T+T^*YP'_\sH T\right)_\sH,\\
&0\le Y<I_\sH
\end{split}
\right.
\end{equation}
If $X$ is a solution of the system \eqref{SkypX} then $Y=I_\sH-X$ satisfies \eqref{wkyp}and therefore $X$ satisfies \eqref{CKYP}.

Similarly \eqref{CKYPQ} is equivalent to \eqref{SkypQX}. Note that
by Proposition \ref{NEW} the right hand sides of \eqref{SkypX} and
\eqref{SkypQX} are equal. Using \eqref{EXSHORT} we get that
\eqref{SkypQX} is equivalent to \eqref{SHORTX}. By Sylvester's
criteria \eqref{CKYP1} is equivalent to \eqref{SHORTX}.
\end{proof}
\begin{proposition}
\label{optimal} Let the function $\Theta(\lambda)$ belongs to the
Schur class ${\bf S}(\sM,\sN)$ and let $Z(\lambda)$ be its M\"obius
parameter. Then the passive minimal realization
\[
\nu=\left\{\begin{pmatrix}S&F\cr G&0\end{pmatrix};\sH,\sD_{\Theta(0)},\sD_{\Theta^*(0)}\right\}
\]
 of $Z(\lambda)$ is optimal ((*)-optimal) if and only if the passive minimal realization
\[
\tau=\left\{\begin{pmatrix}-F\Theta^*(0)G+S&FD_{\Theta(0)}\cr D_{\Theta^*(0)}G&\Theta(0)\end{pmatrix};\sH,\sM,\sN\right\}
\]
of $\Theta(\lambda)$ is optimal ((*)-optimal).
\end{proposition}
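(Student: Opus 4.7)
The plan is to reduce optimality (respectively $(*)$-optimality) of a minimal passive realization to the condition that the minimal (respectively maximal) solution of its KYP inequality equals $I_\sH$, and then to invoke Theorem~\ref{EQIN}, which asserts that the KYP inequalities for $\tau$ and $\nu$ have identical solution sets in $[0, I_\sH]$.

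First I would reformulate optimality in terms of $X_{\min}$. As recalled in the Introduction, for any minimal passive realization $\tau$ of $\Theta$ the system
\[
\dot\tau = \left\{\begin{pmatrix} X_{\min}^{1/2} A X_{\min}^{-1/2} & X_{\min}^{1/2} B \\ C X_{\min}^{-1/2} & D \end{pmatrix}; \sH, \sM, \sN\right\}
\]
is a minimal optimal realization of $\Theta$, and any two minimal optimal realizations are unitarily similar. If $\tau$ is itself optimal and minimal, there is a unitary $U:\sH\to\sH$ intertwining $\tau$ with $\dot\tau$; the relations $UB = X_{\min}^{1/2}B$ and $UA = X_{\min}^{1/2}AX_{\min}^{-1/2}U$ yield $U = X_{\min}^{1/2}$ on every vector $A^n Bu$, hence on all of $\sH$ by controllability. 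Since $U$ is unitary while $X_{\min}^{1/2}$ is a nonnegative contraction, this forces $X_{\min}=I_\sH$. The converse $X_{\min}=I_\sH\Rightarrow\dot\tau=\tau$ is trivial, so under minimality $\tau$ is optimal if and only if $X_{\min}(\tau)=I_\sH$; the dual argument, applied to the adjoint system, gives that $\tau$ is $(*)$-optimal if and only if $X_{\max}(\tau)=I_\sH$.

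By Theorem~\ref{EQIN}, the KYP inequality \eqref{CKYP} for $T$ and the KYP inequality \eqref{CKYPQ} for $Q$ are equivalent on the operator interval $[0, I_\sH]$, so they share the same minimal and maximal solutions:
\[
X_{\min}(\tau) = X_{\min}(\nu), \qquad X_{\max}(\tau) = X_{\max}(\nu).
\]
Combined with Theorem~\ref{TT1}, which asserts that $\tau$ is minimal if and only if $\nu$ is minimal, one obtains
\[
\tau \text{ optimal} \iff X_{\min}(\tau)=I_\sH \iff X_{\min}(\nu)=I_\sH \iff \nu \text{ optimal},
\]
and analogously for $(*)$-optimality.

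The main obstacle is the first step. The Introduction defines optimality through a trajectory inequality rather than through $X_{\min}$, and one must argue carefully that under minimality the essentially unique intertwiner between $\tau$ and $\dot\tau$ must coincide with the positive contraction $X_{\min}^{1/2}$, forcing $X_{\min}^{1/2}$ to be simultaneously a nonnegative contraction and a unitary operator, hence equal to $I_\sH$. Once this characterization of optimality is in place, the proposition is an immediate consequence of Theorems~\ref{EQIN} and \ref{TT1}.
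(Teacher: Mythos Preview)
Your approach is essentially the paper's: both reduce optimality of a minimal passive realization to the condition $X_{\min}=I_\sH$ and then invoke the equivalence of the KYP solution sets for $T$ and $Q$ established in Theorem~\ref{EQIN}. The paper simply asserts the characterization ``$\tau$ optimal $\iff X_{\min}=I_\sH$'' as known from \cite{ArKaaP}, whereas you supply an explicit argument via the unitary intertwiner with $\dot\tau$; that extra detail is fine and correct.

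One imprecision in your $(*)$-optimal part: you write $X_{\max}(\tau)=X_{\max}(\nu)$ as a consequence of Theorem~\ref{EQIN}, but that theorem only equates solution sets on $[0,I_\sH]$, where the maximum is trivially $I_\sH$ for any passive system. The relevant $X_{\max}$ is the maximal solution of the \emph{generalized} KYP inequality \eqref{kyp3}, which may be unbounded. The paper avoids this by passing directly to the adjoint systems $\nu^*$ and $\tau^*$ and applying the optimal case (so one is again comparing minimal solutions in $[0,I_\sH]$). Your phrase ``the dual argument, applied to the adjoint system'' suggests you have this in mind; if so, drop the intermediate $X_{\max}$ formulation and argue, exactly as the paper does, that $\nu$ is $(*)$-optimal $\iff \nu^*$ is optimal $\iff X_{\min}(\nu^*)=I_\sH \iff X_{\min}(\tau^*)=I_\sH \iff \tau$ is $(*)$-optimal. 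Alternatively, invoke Proposition~\ref{EQ} (which covers the generalized KYP) rather than Theorem~\ref{EQIN}.
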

\begin{proof} According to Theorem \ref{EQIN} the set of all solutions of the KYP inequality
\eqref{CKYP} for
$T=\begin{pmatrix}-F\Theta^*(0)G+S&FD_{\Theta(0)}\cr
D_{\Theta^*(0)}G&\Theta(0)\end{pmatrix}$ coincides with the set of
all solutions of the KYP inequality \eqref{CKYPQ} for
$Q=\begin{pmatrix}S&F\cr G&0\end{pmatrix}$. If the system $\nu$
($\tau$) is optimal realization of $Z(\lambda)$ ($\Theta(\lambda)$)
then the minimal solution of \eqref{CKYPQ} (\eqref{CKYP}) is
$X_0=I_\sH$. Therefore, the minimal solution of \eqref{CKYP}
(\eqref{CKYPQ}) is $X_0=I_\sH$ as well. Thus, the system $\tau$
($\nu$) is optimal realization of $\Theta(\lambda)$ ($Z(\lambda)$).
Passing to the adjoint systems
$$\nu^*=\left\{\begin{pmatrix}S^*&G^*\cr F^*&0\end{pmatrix};\sH,\sD_{\Theta^*(0)},\sD_{\Theta(0)}\right\}$$
and
$$\tau^*=\left\{\begin{pmatrix}-G^*\Theta(0)F^*+S^*&G^*D_{\Theta^*(0)}\cr D_{\Theta(0)}F^*&\Theta^*(0)\end{pmatrix};\sH,\sN, \sM\right\}$$
and their transfer functions
$Z^*(\overline \lambda)$ and $\Theta^*(\overline \lambda)$, respectively, we get  that
$\nu^*$ is (*)-optimal iff $\tau^*$ is (*)-optimal.
\end{proof}
The next theorem is an immediate consequence of Theorem \ref{EQIN}.
\begin{theorem}
\label{RICEQ} Let
\[ T=\begin{pmatrix} A&B \cr
C&D\end{pmatrix}=\begin{pmatrix}-FD^*G+D_{F^*}LD_G&FD_D\cr
D_{D^*}G&D\end{pmatrix}:
\begin{pmatrix}\sH\cr\sM\end{pmatrix}\to
\begin{pmatrix}\sH\cr\sN\end{pmatrix}
\]
 be a contraction and let
$Q:=\begin{pmatrix}D_{F^*}LD_G&F\cr
G&0\end{pmatrix}:\begin{pmatrix}\sH\cr\sD_D\end{pmatrix}\to\begin{pmatrix}\sH\cr
\sD_{D^*}\end{pmatrix}. $ Then the following equations are
equivalent on the operator interval $(0,I_\sH]$:
\begin{equation}
\label{RicXX}  X-A^*X A-C^*C-(A^*X B+
C^*D)(I_\sM-B^*XB-D^*D)^{-1}(B^*X A+D^* C)=0,
\end{equation}
\begin{equation}
\label{RicX} (I_\sH-X)P_\sH= \left(D^2_T+T^*(I_\sH-X)P'_\sH
T\right)_\sH,
\end{equation}
\begin{equation}
\label{RICQX} (I_\sH-X)P_\sH= \left(D^2_Q+Q^*(I_\sH-X)P'_\sH
Q\right)_\sH,
\end{equation}
\begin{equation}
\label{RicQXXX} X-G^*G-S^*XS-  S^*XF(I_\sH- F^*X F)^{-1}F^*XS= 0,
\end{equation}
\begin{equation}
\label{RICSHORTX}
 X=G^*G+S^*X^{1/2}(I_\sH-X^{1/2}FF^*X^{1/2})^{-1}X^{1/2}S,
\end{equation}
where $S=D_{F^*}LD_G$,
\[
\begin{split}
&S^*XF(I_\sH- F^*X
F)^{-1}F^*XS:=\\
&=\left((I_\sH-FXF^*)^{-1/2}F^*XS\right)^*\left((I_\sH-FXF^*)^{-1/2}F^*XS\right),
\end{split}
\]
and
\[
\begin{split}
&S^*X^{1/2}(I_\sH-X^{1/2}FF^*X^{1/2})^{-1}X^{1/2}S:=\\
&=D_GL^*\left((I_\sH-X^{1/2}FF^*X^{1/2})^{-1/2}X^{1/2}D_{F^*}\right)^*
\left((I_\sH-X^{1/2}FF^*X^{1/2})^{-1/2}X^{1/2}D_{F^*}\right)LD_G.
\end{split}
\]
Moreover, the equations \eqref{RicX} -- \eqref{RICSHORTX} are
equivalent to the equation \eqref{RIC1}.
\end{theorem}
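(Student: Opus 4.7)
The plan is to parallel the proof of Theorem \ref{EQIN}, tracking equalities rather than inequalities throughout; the shorted operator, via Kre\u{\i}n's infimum formula \eqref{Sh1}, is the pivotal tool linking the infimum form \eqref{RIC1} with the shorted-operator form \eqref{RicX}, and the Schur complement then supplies the passage to the classical Riccati forms \eqref{RicXX} and \eqref{RicQXXX}.

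First I would establish the equivalence of \eqref{RicX} and \eqref{RIC1}. Setting $Y = I_\sH - X$, the computation from the proof of Theorem \ref{EQIN} gives
\[
\left\|\wh X^{1/2}\begin{pmatrix}x\\ u\end{pmatrix}\right\|^2 - \left\|\wt X^{1/2}T\begin{pmatrix}x\\ u\end{pmatrix}\right\|^2 = \left((D^2_T + T^*YP'_\sH T)\begin{pmatrix}x\\ u\end{pmatrix},\begin{pmatrix}x\\ u\end{pmatrix}\right) - (Yx,x).
\]
Equation \eqref{RIC1} says that the infimum over $u \in \sM$ of this expression is zero for every $x \in \sH$. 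By \eqref{Sh1}, applied to the subspace $\sH \subset \sH\oplus\sM$ and the nonnegative operator $D^2_T + T^*YP'_\sH T$, this vanishing-infimum condition is precisely the identity $YP_\sH = (D^2_T + T^*YP'_\sH T)_\sH$, which is \eqref{RicX}.

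Next, \eqref{shortmap} in Proposition \ref{NEW} gives immediately the equivalence \eqref{RicX}$\iff$\eqref{RICQX}, and \eqref{EXSHORT}, together with $D^2_G = I_\sH - G^*G$, rewrites \eqref{RICQX} as \eqref{RICSHORTX}. The equivalences \eqref{RicXX}$\iff$\eqref{RicX} and \eqref{RicQXXX}$\iff$\eqref{RICSHORTX} are then obtained by the Schur complement: the $(2,2)$-blocks of the respective KYP block-matrices are $I_\sM - B^*XB - D^*D$ and $I_{\sD_D} - F^*XF$, and by \eqref{shormat}, \eqref{shormat1} the shorted operator of the full KYP block-matrix is exactly its Schur complement with respect to the $(2,2)$-block. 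The equivalence \eqref{RicXX}$\iff$\eqref{RicQXXX} is then immediate, and could alternatively be derived from Proposition \ref{trans} applied to the identity $T = \cM_D(Q)$.

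The main obstacle I expect is the bookkeeping at points of the interval $(0, I_\sH]$ where the $(2,2)$-blocks $I_\sM - B^*XB - D^*D$ or $I_{\sD_D} - F^*XF$ fail to be boundedly invertible; then the inverses appearing in \eqref{RicXX}, \eqref{RicQXXX}, and \eqref{RICSHORTX} must be interpreted as Moore--Penrose pseudoinverses, as flagged by the two explicit bracketed identities in the statement of the theorem. Here Douglas' range-inclusion theorem — already invoked in the proofs of Theorem \ref{newshort} and Proposition \ref{NEW} — ensures that the off-diagonal blocks factor through the appropriate square roots, so that the general Schur complement formula \eqref{shormat} applies in place of \eqref{shormat1} and the equivalences go through uniformly on all of $(0, I_\sH]$.
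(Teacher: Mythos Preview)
Your approach is correct and matches the paper's, which simply declares the theorem an immediate consequence of Theorem \ref{EQIN}; you have supplied the details the paper omits. One small point of bookkeeping: the Schur complement of the $Q$-block matrix (with $(2,2)$-entry $I_{\sD_D}-F^*XF$) links \eqref{RicQXXX} directly to \eqref{RICQX} rather than to \eqref{RICSHORTX}, but since you have already established \eqref{RICQX}$\iff$\eqref{RICSHORTX} via \eqref{EXSHORT}, the chain closes as intended.
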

 The equivalent equations \eqref{RicX} -- \eqref{RICSHORTX}
will be called the Riccati equations.
\begin{remark}
\label{RISO} Let $Q=\begin{pmatrix}S&F\cr
G&0\end{pmatrix}:\begin{pmatrix}\sH\cr\sL_1\end{pmatrix}\to
\begin{pmatrix}\sH\cr \sL_2\end{pmatrix}$
be an isometric operator. Then $F$ is isometry, $D_{F^*}$ is the
orthogonal projection, $S=D_{F^*}LD_G$, and  $D_L=0$. Denote
$\cK=\ker F^*=\ran D_{F^*}$. Since $D_{F^*}=P_\cK$. we get
  \[
 \begin{split}
 & D^2_{G}-D_GL^*D_{F^*}X^{1/2}(I_\sH-X^{1/2}FF^*X^{1/2})^{-1}X^{1/2}D_{F^*}LD_G=\\
&=S^*\left(P_\cK-P_{\cK}X^{1/2}(I_\sH-X^{1/2}P_{\cK^\perp}X^{1/2})^{-1}X^{1/2}P_{\cK}\right)S.
\end{split}
 \]
Taking into account Theorem \ref{newshort} and \eqref{SHORTX} we get
the corresponding KYP inequality
\[
I_\sH-X\le S^*\left(I_\sH-X\right)_{\cK}S.
\]
\end{remark}
\begin{example}
\label{EX2} Let $\sH$, $\sM$, and $\sN$ be separable Hilbert spaces.
Suppose that $G\in\bL(\sH,\sN)$ and $F\in\bL(\sM,\sH)$ are such that
$G^*G=FF^*=\alpha I_\sH$, where $\alpha\in(0,1)$. Then
$D_G=D_{F^*}=(1-\alpha)^{1/2}I_\sH$. Let $L$ be a unitary operator
in $\sH$. By Theorem \ref{ParContr} the operator
$$Q=\begin{pmatrix} (1-\alpha) L&F\cr
G&0\end{pmatrix}:\begin{pmatrix}\sH\cr\sM\end{pmatrix}\to\begin{pmatrix}\sH\cr\sN\end{pmatrix}$$
is a contraction.
 Consider the passive system
\[
\nu=\left\{\begin{pmatrix} (1-\alpha) L&F\cr G&0\end{pmatrix}
;\sH,\sM,\sN\right\}.
\]
Because $\ran F=\ran G^*=\sH$, the system $\nu$ is minimal.
 The corresponding Riccati equation \eqref{RICSHORTX} takes the
form
\begin{equation}
\label{SPETS} X=\alpha I_\sH+(1-\alpha)^2L^*X(I_\sH-\alpha
X)^{-1}L,\; 0< X\le  I_\sH.
\end{equation}
We will prove that this equation has a unique solution $X=I_\sH$.

Put $W=(1-\alpha)(I_\sH-\alpha X)^{-1}.$  Then $(1-\alpha)I_\sH<W\le
I_\sH.$  From \eqref{SPETS} we obtain the equation
\begin{equation}
\label{SPETS1} L^*WL+W^{-1}=2I_\sH,
\end{equation}
Clearly, \eqref{SPETS1} has a solution $W=I_\sH$. Let $W$ be any
solution of \eqref{SPETS1} such that $(1-\alpha)I_\sH<W\le I_\sH,$
i.e. $\sigma(W)\subset [1-\alpha, 1]$. Since $L$ is unitary
operator, from \eqref{SPETS1} it follows that
$\sigma(2I_\sH-W^{-1})=\sigma(W).$ Let $\lambda_0\in\sigma(W)$ then
$2-\lambda^{-1}_0\in\sigma(2I_\sH-W^{-1})=\sigma(W)$. Since
$2-\lambda^{-1}_0>0$, we get that
\[
\frac{1}{2}<\lambda_0\le 1.
\]
Because $\mu_0=2-\lambda^{-1}_0\in \sigma(W)$ and $1/2<\mu_0\le 1$,
we get
\[
\frac{2}{3}<\lambda_0\le 1.
\]
Thus
\[
\frac{2}{3}<2-\lambda^{-1}_0\le 1.
\]
It follows that
\[
\frac{3}{4}<\lambda_0\le 1.
\]
Continuing these reasonings, we get
\[
\frac{n}{n+1}<\lambda_0\le 1, \; n=1,2,\ldots.
\]
And now we get that $\lambda_0=1$, i.e. $\sigma(W)=\{1\}.$ Because
$W$ is selfadjoint operator we have $W=I_\sH$. Thus, the equation
\eqref{SPETS} has a unique solution $X=I_\sH.$
\end{example}

\section{Properties of solutions of the KYP inequality and Riccati equation}
\begin{proposition}
\label{observable} Let $\tau=\left\{\begin{pmatrix} A&B \cr
C&D\end{pmatrix};\sH,\sM,\sN\right\}$ be a passive observable
system. If a nonnegative contraction $X$ in $\sH$ is a solution of
the inequality
\begin{equation} \label{kernel}
  (I_\sH-X)P_\sH\le
\left(D^2_T+T^*(I_\sH-X)P'_\sH T\right)_\sH
\end{equation}
then $\ker X=\{0\}$.
\end{proposition}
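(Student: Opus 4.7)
The plan is to reduce the shorted-operator inequality to the standard weak form of the KYP inequality, then exploit observability directly on $\ker X$.

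First I would rewrite \eqref{kernel} in an equivalent unshortened form. Since for any nonnegative bounded operator $S$ and subspace $\cK$ one has $S_\cK\le S$, and since the shorted operator is characterized by \eqref{Sh1}, the inequality \eqref{kernel} is equivalent (cf. the derivation in the proof of Theorem \ref{EQIN}) to the weak-form estimate
\[
\|X^{1/2}x\|^2+\|u\|^2\ge \|X^{1/2}(Ax+Bu)\|^2+\|Cx+Du\|^2,\qquad x\in\sH,\ u\in\sM.
\]
To see this directly, I would compute the right-hand side of \eqref{kernel} acting on $\binom{x}{u}$: using $D_T^2=I-T^*T$ and $P'_\sH T\binom{x}{u}=Ax+Bu$, one gets
\[
\left(\left(D_T^2+T^*(I-X)P'_\sH T\right)\tbinom{x}{u},\tbinom{x}{u}\right)
=\|x\|^2+\|u\|^2-\|Cx+Du\|^2-\|X^{1/2}(Ax+Bu)\|^2,
\]
while the left-hand side is $\|x\|^2-\|X^{1/2}x\|^2$; the inequality between these is exactly the displayed KYP weak form.

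Next I would specialize to $u=0$. This yields
\[
\|X^{1/2}Ax\|^2+\|Cx\|^2\le \|X^{1/2}x\|^2,\qquad x\in\sH.
\]
Consequently, for every $x\in\ker X$ we get $X^{1/2}Ax=0$ and $Cx=0$, i.e.\ $Ax\in\ker X$ and $Cx=0$. Iterating, $A^n x\in\ker X$ and $CA^n x=0$ for all $n\ge 0$.

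Finally, observability of $\tau$ means $\bigcap_{n=0}^\infty\ker(CA^n)=\{0\}$, so any $x\in\ker X$ must vanish, proving $\ker X=\{0\}$. There is no serious obstacle here; the only point that deserves care is the passage from the shorted-operator inequality \eqref{kernel} to its weak form, and that is handled by the direct algebraic computation above.
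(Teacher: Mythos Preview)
Your proof is correct and follows essentially the same route as the paper: both arguments show that $\ker X$ is $A$-invariant and contained in $\ker C$, then invoke observability. The only cosmetic difference is that the paper works directly with the shorted operator and uses the equality case $0\le S\le I,\ (Sf,f)=\|f\|^2\Rightarrow Sf=f$, whereas you first pass to the equivalent weak KYP form and specialize to $u=0$; your version is if anything a bit cleaner.
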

\begin{proof}
Suppose that $X$ satisfies \eqref{kernel} and $\ker X\ne \{0\}$.
Then there is a nonzero vector $x$ in $\sH$ such that
$(I_\sH-X)x=x$. Since $D^2_T+T^*(I_\sH-X)P'_\sH T$ is a contraction,
we obtain $\left(D^2_T+T^*(I_\sH-X)P'_\sH T\right)_\sH=x$ and hence
$D^2_T+T^*(I_\sH-X)P'_\sH T=x$. It follows that
\[
P_\sN T x=0,\; X P'_\sH Tx=0.
\]
This means that $Cx=0$ and $XAx=0$. Replacing $x$ by $Ax$ we get
$CAx=0$ and $XA^2x=0$. By induction $CA^n x=0$ for all
$n=0,1,\ldots.$ Since the system $\tau$ is observable, we get $x=0$.
\end{proof}

\begin{theorem}
\label{T1} Let $T=\begin{pmatrix} A&B \cr
C&D\end{pmatrix}$:$\begin{pmatrix}\sH\cr \sM \end{pmatrix}$:$\to
\begin{pmatrix}\sH\cr \sN \end{pmatrix}$ be a contraction. Then every solution $Y$ of \eqref{Skyp} satisfies the estimate
\begin{equation}
\label{est1} Y\le\left(D^2_{P_\sN T}\right)_\sH\uphar\sH.
\end{equation}
If the system  $\tau=\left\{\begin{pmatrix} A&B \cr
C&D\end{pmatrix};\sH,\sM,\sN\right\}$ is observable then every $Y$
from the operator interval $[0,\left(D^2_T\right)_\sH\uphar \sH]$ is
a solution of \eqref{Skyp}.
\end{theorem}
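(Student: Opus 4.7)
The plan is to handle the two assertions separately, both relying on monotonicity of the Kre\u{\i}n shorted operator (Proposition \ref{short}(2)) together with the identity
\[
D^2_T+T^*P'_\sH T=D^2_{P_\sN T},
\]
which is immediate from $P'_\sH+P_\sN=I_{\sH\oplus\sN}$ and $D^2_T=I-T^*T$.

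For the upper estimate $Y\le(D^2_{P_\sN T})_\sH\uphar\sH$, I would start from the observation that a solution of \eqref{Skyp} automatically satisfies $Y\le I_\sH$, hence $T^*YP'_\sH T\le T^*P'_\sH T$, and the displayed identity gives $D^2_T+T^*YP'_\sH T\le D^2_{P_\sN T}$. Applying monotonicity of the shorted operator and chaining with the bound in \eqref{Skyp} produces
\[
YP_\sH\le\left(D^2_T+T^*YP'_\sH T\right)_\sH\le\left(D^2_{P_\sN T}\right)_\sH,
\]
and restricting to $\sH\subset\sH\oplus\sM$ yields the claim. This last step is legitimate because both sides are supported on $\sH$: the shorted operator has range in $\sH$, and $YP_\sH$ vanishes on $\sM$.

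For the second statement, assume $\tau$ is observable and $0\le Y\le(D^2_T)_\sH\uphar\sH$. The inequality part of \eqref{Skyp} will be immediate: the hypothesis together with the support property of the shorted operator gives $YP_\sH\le(D^2_T)_\sH$, and then $T^*YP'_\sH T\ge 0$ combined with monotonicity yields $YP_\sH\le(D^2_T+T^*YP'_\sH T)_\sH$. The delicate point, and the only place observability really enters, is the strict condition $\ker(I_\sH-Y)=\{0\}$ (which is what $Y<I_\sH$ encodes in the sense of the proof of Theorem \ref{EQIN}). Here I will use $(D^2_T)_\sH\le D^2_T$ and pass to $(1,1)$-blocks to obtain
\[
(D^2_T)_\sH\uphar\sH\le I_\sH-A^*A-C^*C.
\]
If $Yx=x$ for some $x\in\sH$ then $\|x\|^2\le\|x\|^2-\|Ax\|^2-\|Cx\|^2$ forces $Ax=0$ and $Cx=0$, so $CA^nx=0$ for every $n\ge 0$, and observability (that is, $\bigcap_n\ker(CA^n)=\{0\}$) gives $x=0$.

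I do not foresee any serious obstacle: everything reduces to the identity above and two applications of monotonicity. The only bookkeeping to stay careful about is the distinction between operators on $\sH$ and on $\sH\oplus\sM$, specifically that the shorted operator $(\cdot)_\sH$ has range in $\sH$, so its $(1,1)$-block coincides with the restriction $\uphar\sH$ while its other blocks vanish.
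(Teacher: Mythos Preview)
Your proof is correct and matches the paper's argument for both the upper estimate and the inequality part of the second claim: the identity $D^2_T+T^*P'_\sH T=D^2_{P_\sN T}$ together with monotonicity of the shorted operator (Proposition~\ref{short}) is exactly what the paper uses.

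The one place you diverge is in checking $\ker(I_\sH-Y)=\{0\}$. The paper simply invokes Proposition~\ref{observable} (with $X=I_\sH-Y$), whose proof runs an iterative argument through the inequality \eqref{kernel}: from $(I_\sH-X)x=x$ one extracts $Cx=0$ and $XAx=0$, then replaces $x$ by $Ax$ to get $CAx=0$ and $XA^2x=0$, and so on. You instead exploit the stronger hypothesis $Y\le(D^2_T)_\sH\uphar\sH\le I_\sH-A^*A-C^*C$, which immediately forces $Ax=0$ and $Cx=0$ and makes the induction unnecessary. Your shortcut is valid precisely because you have that extra upper bound on $Y$; Proposition~\ref{observable} is a more general statement covering any solution of \eqref{kernel}, but for the theorem at hand your direct argument is cleaner.
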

\begin{proof} If $Y$ is a solution of \eqref{Skyp} then in view of
\[
D^2_T+T^*YP'_\sH T\le D^2_T+T^*P'_\sH T=I-T^*P_\sN T=D^2_{P_\sN T}
\]
we get $Y\le\left(D^2_{P_\sN T}\right)_\sH\uphar\sH.$

Suppose that the system $\tau$ is observable.
 By Proposition \ref{short} we have
\[
\left(D^2_T+T^*\left(D^2_T\right)_\sH P'_\sH T\right)_\sH\ge\left(D^2_T\right)_\sH P_\sH.
\]
It follows that the shorted operator $Y=\left(D^2_T\right)_\sH\uphar\sH$ is a solution of
the inequality
\[
YP_\sH\le \left(D^2_T+T^*YP'_\sH T\right)_\sH.
\]
Moreover, if $Y\in [0,\left(D^2_T\right)_\sH\uphar\sH]$ then
\[
YP_\sH\le \left(D^2_T\right)_\sH P_\sH\le \left(D^2_T+T^*YP'_\sH T\right)_\sH.
\]
By Proposition \ref{observable} we have $\ker (I_\sH-Y)=\{0\}.$
\end{proof}
\begin{corollary}
\label{Col1} Suppose that $\tau=\left\{\begin{pmatrix} A&B\cr
C&D\end{pmatrix},\sH,\sM,\sN\right\}$ is a passive observable
system. Then every $X$ from the operator interval
$[I_\sH-\left(D^2_T\right)_\sH\uphar \sH, I_\sH]$ is a solution of
\eqref{CKYP} and if $X$ is a solution of \eqref{CKYP} then $X\ge
I_\sH-\left(D^2_{P_\sN T}\right)_\sH\uphar\sH$.

Assume
\[
\left(D^2_{T}\right)_\sH=\left(D^2_{P_\sN T}\right)_\sH.
\]
Then the operator $X_{0}=I_\sH-\left(D^2_{P_\sN
T}\right)_\sH\uphar\sH$ is the minimal solution of KYP inequality
\eqref{CKYP}
\end{corollary}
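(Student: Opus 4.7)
The strategy is simply to translate the already-proved Theorem \ref{T1} through the substitution $Y := I_\sH - X$ and the equivalence of \eqref{CKYP} with \eqref{SkypX} established in Theorem \ref{EQIN}. Thus the corollary carries essentially no new content beyond Theorem \ref{T1}; it only repackages the bounds in terms of $X$ rather than $Y$.

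Concretely, I would first invoke Theorem \ref{EQIN} to pass from \eqref{CKYP} to the equivalent form \eqref{SkypX}, namely $(I_\sH-X)P_\sH\le\left(D^2_T+T^*(I_\sH-X)P'_\sH T\right)_\sH$ with $0<X\le I_\sH$. Setting $Y:=I_\sH-X$ converts this into the system \eqref{Skyp} for $Y$ in the operator interval $[0,I_\sH)$. Under this change of variables, the intervals $X\in[I_\sH-(D^2_T)_\sH\uphar\sH,\,I_\sH]$ and $Y\in[0,\,(D^2_T)_\sH\uphar\sH]$ correspond to each other, and likewise the lower bound $X\ge I_\sH-(D^2_{P_\sN T})_\sH\uphar\sH$ corresponds to the upper bound $Y\le(D^2_{P_\sN T})_\sH\uphar\sH$.

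Now I would apply Theorem \ref{T1} twice. The first assertion of that theorem, valid for arbitrary contractions $T$, yields $Y\le(D^2_{P_\sN T})_\sH\uphar\sH$ for every solution $Y$ of \eqref{Skyp}; translating back gives the stated lower bound on every solution $X$ of \eqref{CKYP}. The second assertion of Theorem \ref{T1}, which uses observability of $\tau$, shows that every $Y\in[0,(D^2_T)_\sH\uphar\sH]$ solves \eqref{Skyp}; translating back, every $X$ in $[I_\sH-(D^2_T)_\sH\uphar\sH,\,I_\sH]$ satisfies \eqref{SkypX}. The one small point requiring care is the strict positivity $X>0$ demanded by \eqref{CKYP}: this follows immediately from Proposition \ref{observable}, which guarantees $\ker X=\{0\}$ for any solution of the inequality \eqref{SkypX} under the observability hypothesis.

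Finally, under the extra assumption $(D^2_T)_\sH=(D^2_{P_\sN T})_\sH$, the sufficient lower bound $I_\sH-(D^2_T)_\sH\uphar\sH$ and the necessary lower bound $I_\sH-(D^2_{P_\sN T})_\sH\uphar\sH$ coincide; therefore the common value $X_0=I_\sH-(D^2_{P_\sN T})_\sH\uphar\sH$ is itself a solution of \eqref{CKYP} and dominates no proper solution from below, which is exactly the assertion that $X_0$ is the minimal solution. There is no genuine obstacle in this proof; the entire content is packaged in Theorem \ref{T1}, Theorem \ref{EQIN}, and Proposition \ref{observable}, and the only subtlety is bookkeeping the strict positivity $X>0$ via Proposition \ref{observable}.
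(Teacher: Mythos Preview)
Your proposal is correct and matches the paper's approach exactly: the paper gives no separate proof for this corollary, as it follows immediately from Theorem \ref{T1} via the substitution $Y=I_\sH-X$ together with the equivalence \eqref{CKYP}$\Leftrightarrow$\eqref{SkypX} from Theorem \ref{EQIN}. Your remark about strict positivity is apt, though note that this is already handled inside the proof of Theorem \ref{T1}, where Proposition \ref{observable} is invoked to obtain $\ker(I_\sH-Y)=\{0\}$.
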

\begin{corollary}
\label{Col2} Let $\Theta(\lambda)$ belongs to the Schur class ${\bf
S}(\sM,\sN)$. If the system
$$\tau=\left\{\begin{pmatrix} A&B\cr
C&D\end{pmatrix};\sH,\sM,\sN\right\}$$
 is a passive minimal and
optimal realization of $\Theta(\lambda)$ then
\begin{equation}
\label{nesopt}
(D^2_T)_\sH=0.
\end{equation}
\end{corollary}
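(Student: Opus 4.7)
The plan is to reduce the statement to the identification $X_{\min}=I_\sH$, where $X_{\min}$ denotes the minimal solution of the KYP inequality \eqref{CKYP} associated with $\tau$, and then to read off \eqref{nesopt} from Corollary~\ref{Col1} together with the range property of the shorted operator.

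For the first step I would start from the passive minimal optimal realization
\[
\dot\tau=\left\{\begin{pmatrix} X_{\min}^{1/2}AX_{\min}^{-1/2}&X_{\min}^{1/2}B\cr CX_{\min}^{-1/2}&D\end{pmatrix};\sH,\sM,\sN\right\}
\]
recalled in the Introduction. A direct computation (using $\dot A^k\dot B=X_{\min}^{1/2}A^kB$) shows that for every input sequence $u_0,\ldots,u_n\in\sM$
\[
\sum_{k=0}^n\dot A^k\dot B u_k=X_{\min}^{1/2}\sum_{k=0}^n A^kBu_k.
\]
Since $\tau$ is itself optimal, the defining inequality of optimality applied with the comparison system $\dot\tau$ yields
\[
\left\|\sum_{k=0}^n A^kBu_k\right\|_\sH\le\left\|X_{\min}^{1/2}\sum_{k=0}^n A^kBu_k\right\|_\sH.
\]
Combined with $X_{\min}\le I_\sH$ (which holds because $X=I_\sH$ solves \eqref{CKYP}), this forces $((I_\sH-X_{\min})v,v)=0$ and hence $(I_\sH-X_{\min})v=0$ for every $v$ of the form $\sum_{k=0}^n A^kBu_k$. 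By \eqref{CO} such vectors form a dense subset of $\sH^c_\tau$, and minimality of $\tau$ gives $\sH^c_\tau=\sH$. Therefore $X_{\min}=I_\sH$.

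For the second step I would invoke Corollary~\ref{Col1}: the observability of $\tau$ guarantees that every operator in the interval $[I_\sH-(D^2_T)_\sH\uphar\sH,\,I_\sH]$ solves \eqref{CKYP}, so the minimal solution satisfies $X_{\min}\le I_\sH-(D^2_T)_\sH\uphar\sH$. Substituting $X_{\min}=I_\sH$ gives $(D^2_T)_\sH\uphar\sH\le 0$, and nonnegativity of the shorted operator yields $(D^2_T)_\sH\uphar\sH=0$. Since $\ran(D^2_T)_\sH\subset\sH$ by the very definition of the shorted operator, with respect to the decomposition $\sH\oplus\sM$ the nonnegative selfadjoint operator $(D^2_T)_\sH$ has block form
\[
(D^2_T)_\sH=\begin{pmatrix}(D^2_T)_\sH\uphar\sH & 0\cr 0 & 0\end{pmatrix},
\]
so the vanishing of the $\sH$-block just established delivers $(D^2_T)_\sH=0$, which is \eqref{nesopt}.

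The main obstacle I expect is the first step: identifying $X_{\min}$ with $I_\sH$. Although it reduces to unraveling the comparison inequality of optimality through the explicit $\dot\tau$ built from $X_{\min}$, one has to verify carefully the state-trajectory identity and then use the characterization \eqref{CO} of the controllable subspace to upgrade the resulting scalar inequality on reachable states to the operator equality $X_{\min}=I_\sH$. Once that is in hand, the remainder is direct bookkeeping through Corollary~\ref{Col1} and the range property of $(D^2_T)_\sH$.
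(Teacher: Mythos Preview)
Your proposal is correct and follows essentially the same route as the paper: both arguments hinge on the identification $X_{\min}=I_\sH$ for an optimal minimal realization and then read off $(D^2_T)_\sH=0$ from Corollary~\ref{Col1}. The only difference is that the paper simply invokes this identification as a known characterization of optimality, whereas you supply an explicit verification of it via the reachable-state identity $\dot A^k\dot B=X_{\min}^{1/2}A^kB$ and controllability; the concluding application of Corollary~\ref{Col1} is the same in both (the paper phrases it contrapositively).
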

\begin{proof}
If the system $\tau=\left\{\begin{pmatrix} A&B\cr
C&D\end{pmatrix};\sH,\sM,\sN\right\}$ is a passive minimal and
optimal realization of $\Theta(\lambda)$ then the unique solution
$X$ from the operator interval $[0,I_\sH]$ of the KYP inequality
\eqref{kyp1} is the identity operator $I_\sH$. If $(D^2_T)_\sH\ne 0$
then according to Corollary \ref{Col1} the operator
$I_\sH-(D^2_T)_\sH\uphar\sH$ is a solution of \eqref{kyp1}. It
follows that $(D^2_T)_\sH=0.$
\end{proof}
\begin{corollary}
\label{Col3} Let $\tau=\left\{\begin{pmatrix} A&B\cr
C&D\end{pmatrix};\sH,\sM,\sN\right\}$ be a passive minimal system.
Then the minimal $X_0$ solution of the KYP inequality \eqref{CKYP}
satisfies the Riccati equations \eqref{RicXX} -- \eqref{RICSHORTX}.
\end{corollary}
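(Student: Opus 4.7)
The plan is to establish the Riccati equation in the form \eqref{RicX}, namely
\[
(I_\sH-X_0)P_\sH = \bigl(D^2_T+T^*(I_\sH-X_0)P'_\sH T\bigr)_\sH,
\]
from which all the other equivalent forms \eqref{RicXX}--\eqref{RICSHORTX} follow via Theorem \ref{RICEQ}. The argument will be a maximality argument on the complementary operator $Y=I_\sH-X$, combined with Proposition \ref{short} (monotonicity of the shorted operator) and Proposition \ref{observable} (to guarantee that the iterates stay in the admissible class).

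First I would introduce the monotone map on positive contractions in $\sH$,
\[
\Phi(Y):=\bigl(D^2_T+T^*YP'_\sH T\bigr)_\sH\uphar\sH,
\]
and translate the KYP inequality \eqref{CKYP} for $X\in(0,I_\sH]$ into the equivalent inequality for $Y=I_\sH-X\in[0,I_\sH)$, namely $YP_\sH\le\Phi(Y)$, using the passage \eqref{CKYP}$\Longleftrightarrow$\eqref{SkypX} established in the proof of Theorem \ref{EQIN}. Under this correspondence, the minimal solution $X_0$ of \eqref{CKYP} becomes the maximal solution $Y_0=I_\sH-X_0$ of the system $YP_\sH\le\Phi(Y)$, $0\le Y<I_\sH$.

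The core step is iteration: set $Y_1:=\Phi(Y_0)$. From $Y_0P_\sH\le\Phi(Y_0)=Y_1P_\sH$ (acting on $\sH\oplus\sM$) I read off $Y_0\le Y_1$ on $\sH$. By monotonicity (item (2) of Proposition \ref{short}), $\Phi(Y_0)\le\Phi(Y_1)$, hence $Y_1P_\sH\le\Phi(Y_1)$, so $Y_1$ satisfies the inequality. Moreover $Y_1\le I_\sH$ because $D^2_T+T^*Y_0P'_\sH T\le D^2_T+T^*P'_\sH T=I_{\cH'}-T^*P_\sN T\le I_{\cH'}$, so the associated $X_1=I_\sH-Y_1$ satisfies $0\le X_1\le I_\sH$. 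To upgrade to $X_1>0$ (equivalently $Y_1<I_\sH$), I invoke Proposition \ref{observable}: since $\tau$ is minimal, hence observable, every nonnegative contractive solution of \eqref{kernel} has trivial kernel, giving $\ker X_1=\{0\}$. Therefore $X_1$ lies in the admissible set for \eqref{CKYP}.

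Minimality of $X_0$ now forces $X_1\ge X_0$, i.e. $Y_1\le Y_0$; combined with $Y_0\le Y_1$ this yields $Y_0=Y_1=\Phi(Y_0)$, which is the Riccati equation \eqref{RicX} for $X=X_0$. Applying Theorem \ref{RICEQ} converts this into the equations \eqref{RicXX}, \eqref{RICQX}, \eqref{RicQXXX}, and \eqref{RICSHORTX}. The main obstacle is the bookkeeping in the third paragraph: one must check that the iterate $Y_1$, a priori produced as a shorted operator that lives only on $\sH$, indeed remains in the admissible operator interval $[0,I_\sH)$ and still satisfies $Y_1P_\sH\le\Phi(Y_1)$ — this is where monotonicity of shorting and observability of $\tau$ are both essential, and where the hypothesis that $\tau$ be minimal (not merely controllable) is used.
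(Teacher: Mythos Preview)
Your argument is correct and takes a genuinely different route from the paper's. The paper proceeds via the optimal realization: it forms the (densely defined, then contractively extended) operator
\[
T_0=\begin{pmatrix}X_0^{1/2}&0\cr 0&I_\sN\end{pmatrix}T\begin{pmatrix}X_0^{-1/2}&0\cr 0&I_\sM\end{pmatrix},
\]
invokes the fact from \cite{ArKaaP} (quoted in the Introduction) that the resulting system $\tau_0$ is a minimal \emph{optimal} realization of the transfer function, applies Corollary~\ref{Col2} to obtain $(D^2_{T_0})_\sH=0$, and then translates this back through the identity $\bigl({\bf\wh X_0}-T^*{\bf\wt X_0}T\bigr)_\sH=0$ to the Riccati equation \eqref{RicX}. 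Your proof, by contrast, stays entirely within the order-theoretic picture: one application of the monotone map $\Phi$ to the maximal subsolution $Y_0$ yields another admissible subsolution $Y_1\ge Y_0$, and maximality forces $Y_1=Y_0$. This avoids the external theory of optimal realizations, relying only on Proposition~\ref{short} (monotonicity of shorting) and Proposition~\ref{observable}; in effect it is a one-step variant of the iteration in Theorem~\ref{iter0}, started at $Y_0$ rather than at $I_\sH$. The paper's approach has the merit of making the link between the Riccati equation and optimality explicit, while yours is more self-contained and, incidentally, shows that observability alone (rather than full minimality) suffices, provided the minimal solution $X_0$ is already known to exist.
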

\begin{proof}
Since the system $\tau$ is passive and minimal, the minimal solution
$X_0$ of \eqref{kyp3} satisfies $0<X_0\le I_\sH$. Let the operator
$T_0$ be defined on the domain $\ran X_0^{1/2}\oplus\sM$ by the
equality
\[
T_0=\begin{pmatrix} X_0^{1/2}&0\cr 0&I_\sN\end{pmatrix}T\begin{pmatrix} X_0^{-1/2}&0\cr 0&I_\sM\end{pmatrix}.
\]
The operator $T_0$ is a contraction and has contractive continuation
on $\sH\oplus\sM$. We preserve the notation $T_0$ for this
continuation. The system $\tau_0=\{T_0,\sH,\sM,\sN\}$ is passive
minimal and optimal realization of the transfer function
$\Theta(\lambda)=D+\lambda C(I-\lambda A)^{-1}B$, $|\lambda|< 1$ for
the system $\tau$. According to Corollary \ref{Col2} the operator
$T_0$ satisfies the condition $(D^2_{T_0})_\sH=0.$ It follows that
\[
\inf\limits_{u\in\sM}\left\{\left\|D_{T_0}\begin{pmatrix}g\cr u\end{pmatrix}
\right\|^2\right\}=0
\]
for all $g\in\sH$. In particular
\[
\inf\limits_{u\in\sM}\left\{\left\|D_{T_0}\begin{pmatrix}X_0^{1/2}&0\cr 0&I_\sM\end{pmatrix} \begin{pmatrix}g\cr u\end{pmatrix}\right\|^2\right\}=0,\;g\in\sH.
\]
Since
\[
\begin{pmatrix} X_0&0\cr 0&I_\sM\end{pmatrix}-T^*\begin{pmatrix} X_0&0\cr 0&I_\sN\end{pmatrix}T=\begin{pmatrix} X_0^{1/2}&0\cr 0&I_\sM\end{pmatrix}D^2_{T_0}\begin{pmatrix} X_0^{1/2}&0\cr 0&I_\sM\end{pmatrix},
\]
we get
\[
\left(\begin{pmatrix} X_0&0\cr 0&I_\sM\end{pmatrix}-T^*\begin{pmatrix} X_0&0\cr 0&I_\sN\end{pmatrix}
T\right)_\sH=0.
\]
For $Y_0=I_\sH-X_0$ we have
\[
\begin{split}
&\begin{pmatrix}I_\sH -Y_0&0\cr 0&I_\sM\end{pmatrix}-T^*\begin{pmatrix}I_\sH-Y_0&0\cr 0&I_\sN\end{pmatrix}T=\\
&=D^2_{T}+T^*Y_0P'_\sH T-Y_0P_\sH.
\end{split}
\]
Since
\[
\left(\begin{pmatrix}I_\sH -Y_0&0\cr 0&I_\sM\end{pmatrix}-T^*\begin{pmatrix}I_\sH-Y_0&0\cr 0&I_\sN\end{pmatrix}
T\right)_\sH=0,
\]
we get
\[
0=(D^2_{T}+T^* Y_0P'_\sH T-Y_0P_\sH)_\sH=(D^2_{T}+T^* Y_0P'_\sH
T)_\sH- Y_0P_\sH.
\]
Thus, $Y_0$ satisfies the equation
\begin{equation}
\label{maxsol} YP_\sH=(I-T^*T+T^* YP'_\sH T)_\sH
\end{equation}
and $X_0=I_\sH-Y_0$ satisfies the equation \eqref{RicX}.
\end{proof}
\begin{corollary}
\label{Col4} Let $\Theta(\lambda)\in{\bf S}(\sM,\sN)$ and let
$\tau=\left\{\begin{pmatrix} A&B\cr
C&D\end{pmatrix};\sH,\sM,\sN\right\}$ be a passive minimal
realization of $\Theta$. If $(D^2_{P_\sN T})_\sH=0$ then the system
$\tau$ is optimal.
\end{corollary}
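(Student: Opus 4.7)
The plan is to invoke Corollary \ref{Col1} to show that the hypothesis $(D^2_{P_\sN T})_\sH=0$ forces the minimal solution of the KYP inequality \eqref{CKYP} to be the identity, and then to use the construction of the optimal realization from $X_{\min}$ recalled in the Introduction.

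First, I would apply the second assertion of Corollary \ref{Col1}: any solution $X$ of the KYP inequality \eqref{CKYP} for $\tau$ satisfies
\[
X\ge I_\sH-(D^2_{P_\sN T})_\sH\uphar\sH.
\]
Since we assume $(D^2_{P_\sN T})_\sH=0$, this estimate reduces to $X\ge I_\sH$, and combined with the constraint $X\le I_\sH$ in \eqref{CKYP} we obtain $X=I_\sH$. In particular, the minimal solution is $X_{\min}=I_\sH$.

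Next, I would appeal to the result from \cite{ArKaaP} cited in the Introduction: for a minimal passive system $\tau$ whose transfer function coincides with a Schur class function near the origin, the system
\[
\dot\tau=\left\{\begin{pmatrix} X^{1/2}_{\min}AX^{-1/2}_{\min}&X^{1/2}_{\min}B\cr CX^{-1/2}_{\min}&D\end{pmatrix};\sH,\sM,\sN\right\}
\]
is a minimal optimal realization of $\Theta$. With $X_{\min}=I_\sH$, the similarity is trivial and $\dot\tau$ coincides with $\tau$, so $\tau$ itself is optimal.

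There is essentially no obstacle here beyond citing the right earlier result; the only point that requires a little care is that Corollary \ref{Col1} is stated for observable passive systems, which is satisfied since $\tau$ is assumed minimal (hence in particular observable). The entire argument is a direct consequence of the characterization of the minimal solution established in Corollary \ref{Col1} together with the ``optimal realization via $X_{\min}$'' construction recalled from \cite{ArKaaP}.
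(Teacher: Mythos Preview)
Your proof is correct and follows essentially the same approach as the paper: both deduce from Corollary~\ref{Col1} that the hypothesis forces $X_{\min}=I_\sH$, and then conclude optimality of $\tau$. The only cosmetic difference is that the paper first notes $D^2_T\le D^2_{P_\sN T}$ to obtain $(D^2_T)_\sH=0$ and then invokes the ``Assume $(D^2_T)_\sH=(D^2_{P_\sN T})_\sH$'' clause of Corollary~\ref{Col1}, whereas you appeal directly to the lower bound $X\ge I_\sH-(D^2_{P_\sN T})_\sH\uphar\sH$; both routes are equivalent here.
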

\begin{proof}
Since $D^2_T\le D^2_{P_\sN T}$ and $(D^2_{P_\sN T})_\sH=0$, we
obtain $(D^2_T)_\sH=0.$ By Corollary \ref{Col1} in this case the
minimal solution of \eqref{kyp1} is $X_0=I_\sH$. This means that
$\tau$ is the optimal realization of $\Theta$.
\end{proof}
\begin{remark}
\label{RRR} Let $\tau=\left\{\begin{pmatrix} A&B\cr
C&D\end{pmatrix};\sH,\sM,\sN\right\}$ be a minimal passive system
and let
\[
T=\begin{pmatrix}A&B\cr C&D\end{pmatrix}
 =\begin{pmatrix}-FD^*G+D_{F^*}LD_G&FD_D\cr
D_{D^*} G&D
\end{pmatrix}
\]
Then the statements of
 Corollaries \ref{Col1}--\ref{Col4} can be reformulated as follows:
\begin{enumerate}
\item every $X$ from the operator interval $[G^*G+D_GL^*LD_G, I]$ is
a solution of \eqref{CKYP1} and every solution of \eqref{CKYP1}
satisfies the estimate $X \ge G^*G$;
\item if $L=0$ then $X_0=G^*G$ is the minimal solution of
\eqref{CKYP1} (cf. \cite{ArKaaP4});
\item
if the system $\tau$ is optimal realization of the function
$\Theta(\lambda)\in{\bf S}(\sM,\sN)$ then $D_LD_G=0$;
\item if the system $\tau$ is a realization of the function
$\Theta(\lambda)\in{\bf S}(\sM,\sN)$ and if $G$ is isometry then the
system $\tau$ is optimal.
\end{enumerate}
\end{remark}
\begin{remark}
\label{RAN} Let $\tau=\left\{\begin{pmatrix} A&B\cr
C&D\end{pmatrix};\sH,\sM,\sN\right\}$ be a minimal system with
transfer function $\Theta(\lambda)$ from the Schur class ${\bf
S}(\sM,\sN)$. Suppose that the bounded positive selfadjoint operator
$X$ is such that the operator
\[
\delta(X)=I_\sM- D^*D-B^*XB
\]
is positive definite. Then the KYP inequality
\[
L(X)=
\begin{pmatrix} X-A^* XA-C^*C &
-A^*X B- C^*D\cr -B^*X A-D^* C & I- B^*X B-D^*D
\end{pmatrix}
\ge 0
\]
is equivalent to the inequality $R(X)\ge 0$, where
\[
R(X)=X-A^* XA-C^*C-B^*XA(I_\sM- D^*D-B^*XB)^{-1}A^*XB
\]
is the corresponding Schur complement. If there exists such $X$ that
$\delta(X)$ is positive definite and $R(X)\ge 0$ then for the
minimal solution $X_{\min}$ of KYP inequality we have
$\delta(X_{\min})\ge \delta(X)$ and $R(X_{\min})\ge 0$. For a finite
dimensional $\sH$ it was shown in \cite{RV} that the minimal
solution $X_{\min}$ satisfies the algebraic Riccati equation
$R(X_{\min})=0$. Thus, the statement of Corollary \ref{Col3} is the
generalization of the result in \cite{RV} for  a passive minimal
system with infinite dimensional state space.
\end{remark}
\begin{proposition}
\label{PPPP} Let $\Theta(\lambda)\in{\bf S}(\sM,\sN)$ and let the
M\"obius parameter $Z(\lambda)$ of $\Theta$ be of the form
$Z(\lambda)=\lambda K$, $K\in
\bL(\sD_{\Theta(0)},\sD_{\Theta^*(0)})$, $K\ne 0$. Then
\begin{enumerate}
\item
the minimal passive and optimal realization $\tau$ of $\Theta$ is
unitarily equivalent to the system
\[
\tau=\left\{\begin{pmatrix}-K\Theta^*(0)&KD_{\Theta(0)}\cr
D_{\Theta^*(0)}&\Theta(0)\end{pmatrix};\cran K,\sM,\sN  \right\};
\]
\item
the minimal passive and $(*)$- optimal realization $\tau$ of
$\Theta$ is unitarily equivalent to the system
\[
\eta=\left\{\begin{pmatrix}-P_{\cran K^*}\Theta^*(0)K\uphar\cran
K^*&P_{\cran K^*}D_{\Theta(0)}\cr D_{\Theta^*(0)}K\uphar\cran
K^*&\Theta(0)\end{pmatrix};\cran K^*,\sM,\sN \right\};
\]
\end{enumerate}
\end{proposition}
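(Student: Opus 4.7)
The plan is as follows. For part (1), I realize $Z(\lambda)=\lambda K$ using the construction indicated in Proposition \ref{LIN}: set $\sH=\cran K\subset\sD_{\Theta^*(0)}$, let $j:\cran K\hookrightarrow\sD_{\Theta^*(0)}$ be the isometric embedding, and form
\[
\nu=\left\{\begin{pmatrix}0&K\cr j&0\end{pmatrix};\cran K,\sD_{\Theta(0)},\sD_{\Theta^*(0)}\right\}.
\]
The block-matrix is a contraction because $\|K\|\le 1$ (from the Schwarz-lemma bound $\|Z(\lambda)\|\le|\lambda|$ given after Theorem \ref{MO}). Minimality follows from Proposition \ref{trans} specialized to $S=0$: controllability reduces to $\cran F=\cran K=\sH$, and observability to $\ker G=\ker j=\{0\}$, both manifestly satisfied.

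By Theorem \ref{TT1}, $\tau=\cM_{\Theta(0)}(\nu)$ is a passive minimal realization of $\Theta$, and computing $\cM_{\Theta(0)}$ directly produces the matrix in the statement (with the embedding $j$ left implicit inside $-K\Theta^*(0)$ and $D_{\Theta^*(0)}$). Optimality follows from Remark \ref{RRR}(4), since the $G$-entry $j$ of the underlying $Q$ is isometric; equivalently, Remark \ref{RRR}(2) with $L=0$ gives that the minimal solution of the associated KYP inequality equals $G^*G=j^*j=I_{\cran K}$, which characterizes optimality. The unitary-equivalence uniqueness of minimal optimal realizations, recalled in the introduction, then yields (1).

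For part (2), I use the duality principle from the introduction: $\tau$ is the minimal $(*)$-optimal realization of $\Theta$ if and only if its adjoint system $\tau^*$ is the minimal optimal realization of $\Theta^*(\bar\lambda)\in{\bf S}(\sN,\sM)$. A short calculation identifies the M\"obius parameter of $\Theta^*(\bar\lambda)$ as $\lambda K^*$: adjointing the M\"obius representation of $\Theta$, applying the identity $(I+Z^*\Theta(0))^{-1}Z^*=Z^*(I+\Theta(0)Z^*)^{-1}$, and substituting $\lambda\mapsto\bar\lambda$ produces
\[
\Theta^*(\bar\lambda)=\Theta^*(0)+D_{\Theta(0)}Z^*(\bar\lambda)\bl(I+\Theta(0)Z^*(\bar\lambda)\br)^{-1}D_{\Theta^*(0)},
\]
so with $Z(\lambda)=\lambda K$ the parameter is indeed $\lambda K^*$, where $K^*:\sD_{\Theta^*(0)}\to\sD_{\Theta(0)}$. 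Applying part (1) to $\Theta^*(\bar\lambda)$ now yields its minimal optimal realization on state space $\cran K^*\subset\sD_{\Theta(0)}$, with matrix
\[
\begin{pmatrix}-K^*\Theta(0)j'&K^*D_{\Theta^*(0)}\cr D_{\Theta(0)}j'&\Theta^*(0)\end{pmatrix},
\]
where $j':\cran K^*\hookrightarrow\sD_{\Theta(0)}$. Block-adjointing and using $(j')^*=P_{\cran K^*}$ produces exactly the system $\eta$ in the statement, again unique up to unitary equivalence.

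The main bookkeeping obstacle is tracking the projections $P_{\cran K^*}$ and restrictions $\uphar\cran K^*$ that emerge from adjointing operators composed with the embedding $j'$; everything else reduces to reassembling tools already established in the paper (Theorem \ref{TT1}, Propositions \ref{LIN}, \ref{trans} and \ref{optimal}, and Remark \ref{RRR}), together with the adjoint-duality fact from the introduction.
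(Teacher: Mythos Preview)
Your argument is correct. Part (1) matches the paper's proof essentially verbatim: the same realization $\nu$ of $Z(\lambda)=\lambda K$ on $\cran K$, optimality via the isometric $G=j$ (Remark~\ref{RRR}), and the lift to $\tau$ via Theorem~\ref{TT1}/Proposition~\ref{optimal}.

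For part (2) you take a different route from the paper. The paper stays at the $Z$-level: it writes down a \emph{second} passive minimal realization
\[
\sigma=\left\{\begin{pmatrix}0&P_{\cran K^*}\cr K\uphar\cran K^*&0\end{pmatrix};\cran K^*,\sD_{\Theta(0)},\sD_{\Theta^*(0)}\right\}
\]
of the same function $\lambda K$, checks directly that the KYP inequality \eqref{SHORTX} for the adjoint system $\sigma^*$ forces $X=I_{\cran K^*}$ (so $\sigma^*$ is optimal, hence $\sigma$ is $(*)$-optimal), and then invokes Proposition~\ref{optimal} to transfer $(*)$-optimality from $\sigma$ to $\eta=\cM_{\Theta(0)}(\sigma)$. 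You instead work at the $\Theta$-level: you compute that the M\"obius parameter of $\Theta^*(\bar\lambda)$ is $\lambda K^*$, apply part (1) wholesale to $\Theta^*(\bar\lambda)$, and then adjoint the resulting system back. Your approach has the advantage of reducing (2) to (1) by a single uniqueness-of-M\"obius-parameter computation, avoiding a second KYP verification and a second appeal to Proposition~\ref{optimal}; the paper's approach has the advantage of staying entirely within the $Q$-picture and the KYP machinery developed in Sections~6--7, without needing to identify the M\"obius parameter of the adjoint function. The two arguments are close in spirit (indeed $\sigma^*$ is exactly the ``$\nu$'' one would build for $K^*$), and both are valid.
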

\begin{proof} Let $j$ be the embedding of $\cran K$ into $\sD_{\Theta^*(0)}$. Then the system
\[
\nu=\left\{\begin{pmatrix}0&K\cr j&0\end{pmatrix}; \cran
K,\sD_{\Theta(0)},\sD_{\Theta^*(0)}\right\}.
\]
is a passive and minimal realization of the function
$Z(\lambda)=\lambda K$ (see Proposition \ref{LIN}). The
corresponding Riccati equation \eqref{RicQXXX} takes the form
$X=I_{\cran K}.$ By Remark \ref{RRR}, the system $\nu$ is optimal
realization of $Z(\lambda)=\lambda K$.  From Proposition
\ref{optimal} it follows that the system
\[
\tau=\left\{\begin{pmatrix}-K\Theta^*(0)&KD_{\Theta(0)}\cr
D_{\Theta^*(0)}&\Theta(0)\end{pmatrix};\cran K,\sM,\sN  \right\}
\]
is minimal passive and optimal realization of $\Theta$.

The system
\[
\sigma=\left\{\begin{pmatrix}0&P_{\cran K^*}\cr K\uphar\cran
K^*&0\end{pmatrix};\cran
K^*,\sD_{\Theta(0)},\sD_{\Theta^*(0)}\right\}
\]
is the passive and minimal realization of the function $\lambda K$.
The KYP inequality \eqref{SHORTX} for the adjoint system $\sigma^*$
takes the form
\[
\left\{\begin{array}{l} X\ge I_{\cran K^*},\\ 0\le X\le I_{\cran
K^*}
 \end{array}
 \right..
\]
So, $X=I_{\cran K^*}$ is the minimal solution. It is the minimal
solution of the generalized KYP inequality for $\sigma^*$. Hence,
$X=I_{\cran K^*}$ is the maximal solution of the generalized KYP
inequality for $\sigma$. It follows that $\sigma$ is a $(*)$-optimal
realization of $Z(\lambda)=\lambda K$ and by Proposition
\ref{optimal} the system $\eta$ is a $(*)$-optimal realization of
$\Theta(\lambda).$
\end{proof}
The next theorems provides sufficient uniqueness conditions for the
solutions of the Riccati equation.
\begin{theorem}
\label{intersec} Let $T=\begin{pmatrix} A&B\cr
C&D\end{pmatrix}$:$\begin{pmatrix}\sH\cr\sM
\end{pmatrix}$ $\to\begin{pmatrix}\sH\cr\sN
\end{pmatrix}$ be a contraction. Suppose
that
\begin{equation}
\label{UNIQQ} \left\{
\begin{split}
&\left(D^2_T\right)_\sH=0,\\
&\ran\left(\left(D^2_{P_\sN T}\right)_\sH\right)^{1/2} \bigcap
\ran\left(\left(D^2_{P_\sM T^*}\right)_\sH\right)^{1/2}\subset\ran
\left(\left(D^2_{T^*}\right)_\sH\right)^{1/2}.
\end{split}
\right.
\end{equation}
Then the Riccati \eqref{RicX} equation has a unique solution
$X=I_\sH$.
\end{theorem}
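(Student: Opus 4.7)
The plan is to pass to the $Q$-picture via Proposition~\ref{trans}, translate the hypotheses into algebraic conditions on $F$, $G$, $L$ using the shorted-operator formulas \eqref{defshort}, and then derive a contradiction from a hypothetical second Riccati solution $X\neq I_\sH$.

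In the $Q$-picture I may assume $T=\begin{pmatrix}D_{F^*}LD_G & F\\G & 0\end{pmatrix}$ with contractions $F$, $G$, $L$. Formulas \eqref{defshort} translate (i) into $D_LD_G=0$, equivalently $L^*L=I_{\sD_G}$, so $L$ is an isometry into $\sD_{F^*}$ and $D_{L^*}=I-LL^*=P_{\sD_{F^*}\ominus\ran L}$; hypothesis (ii) translates into
\[
\ran D_G\cap \ran D_{F^*}\subset \ran(D_{F^*}D_{L^*})=D_{F^*}(\sD_{F^*}\ominus\ran L).
\]
Substituting $X=I_\sH$ into \eqref{RICSHORTX} and using the Moore--Penrose identity $D_{F^*}(I-FF^*)^{-1}D_{F^*}=P_{\sD_{F^*}}$ together with $L^*L=I_{\sD_G}$ from (i) gives $I = G^*G + D_GL^*LD_G = G^*G + D_G^2 = I_\sH$, verifying $X=I_\sH$ is a Riccati solution.

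Suppose for contradiction that $X\in(0,I_\sH]$ is a second solution; put $Y=I_\sH-X\ne 0$. Using $L^*L=I_{\sD_G}$ the Riccati equation rearranges to
\[
Y = D_GL^*\Pi LD_G,\qquad \Pi=P_{\sD_{F^*}}-A(X)\ge 0,
\]
with $A(X)=D_{F^*}X^{1/2}(I-X^{1/2}FF^*X^{1/2})^{-1}X^{1/2}D_{F^*}$. From this factorization one reads off $\ran Y\subset \ran D_G$ at once. The main technical step is the dual inclusion $\ran Y\subset \ran D_{F^*}$: the identity $\Pi D_{F^*}=D_{F^*}(X^{-1}-FF^*)^{-1}(X^{-1}-I)$ (valid in the Moore--Penrose sense on $\sD_{F^*}$) shows that $\Pi$ sends $\ran D_{F^*}$ into itself, and combined with the factorization of $Y$ through $\Pi$ this yields the desired inclusion.

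Having $\ran Y\subset \ran D_G\cap \ran D_{F^*}$, hypothesis (ii) gives for each $y=Yf\in\ran Y$ a representation $y=D_{F^*}\eta_f$ with $\eta_f\in\sD_{F^*}\ominus\ran L$, in particular $L^*\eta_f=0$. Substituting this back into $Y=D_GL^*\Pi LD_G$ and expanding $\Pi LD_Gf = LD_Gf - D_{F^*}(X^{-1}-FF^*)^{-1}D_{F^*}LD_Gf$, the constraints $L^*\eta_f=0$ and $L^*L=I_{\sD_G}$ conspire to force $(Yf,f)=0$ for every $f\in\sH$, hence $Y=0$, contradicting $Y\ne 0$. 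The chief obstacle is the dual range inclusion $\ran Y\subset \ran D_{F^*}$; once in hand, hypothesis (ii) supplies the structural constraint on $\ran Y$ that, together with the Riccati equation itself, closes the argument.
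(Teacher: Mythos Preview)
Your overall architecture is the same as the paper's: pass to the $Q$-picture, translate $(D^2_T)_\sH=0$ into $L^*L=I_{\sD_G}$, translate (ii) into $\ran D_G\cap\ran D_{F^*}\subset D_{F^*}\ran D_{L^*}$, and rewrite the Riccati equation as $Y=D_GL^*\Pi LD_G$ with $\Pi=P_{\sD_{F^*}}-A(X)$. The difficulty is in the step you yourself flag as ``the chief obstacle,'' and there the argument does not go through.

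The identity $\Pi D_{F^*}=D_{F^*}(X^{-1}-FF^*)^{-1}(X^{-1}-I)$ (granting it makes sense when $X$ is not boundedly invertible) only shows that $\Pi$ maps $\ran D_{F^*}$ into $\ran D_{F^*}$. It does \emph{not} give $\ran Y\subset\ran D_{F^*}$: in $Y=D_GL^*\Pi LD_G$ the argument fed to $\Pi$ is $LD_Gf\in\ran L\subset\sD_{F^*}$, which in general is not in $\ran D_{F^*}$, and the outer factor $D_GL^*$ lands in $\ran D_G$, not in $\ran D_{F^*}$. So neither end of the factorization produces vectors in $\ran D_{F^*}$, and the invariance of $\ran D_{F^*}$ under $\Pi$ is simply not engaged. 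The final paragraph (``the constraints \ldots\ conspire to force $(Yf,f)=0$'') is then built on this unproved inclusion and is itself too sketchy to evaluate; it is not visible how $L^*\eta_f=0$ for some preimage $\eta_f$ of $Yf$ under $D_{F^*}$ interacts with the expression $(Yf,f)=(\Pi LD_Gf,LD_Gf)$.

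The paper's route avoids ever claiming $\ran Y\subset\ran D_{F^*}$. Instead it factors $D_{F^*}X^{1/2}=V\Psi^{1/2}$ and $(I-X)^{1/2}=U\Psi^{1/2}$ with $\Psi=I-X^{1/2}FF^*X^{1/2}$ and $U^*U+V^*V=I$, so that your $\Pi$ is exactly $D^2_{V^*}$ and $\ran Y^{1/2}=D_GL^*\ran D_{V^*}\subset\ran D_G$. The key intertwining $X^{1/2}D_{F^*}D_{V^*}=\Psi^{1/2}D_VV^*$ then gives $D_{F^*}\ran D_{V^*}\subset\ran Y^{1/2}\subset\ran D_G$; since $D_{F^*}\ran D_{V^*}\subset\ran D_{F^*}$ automatically, hypothesis (ii) yields $D_{F^*}\ran D_{V^*}\subset D_{F^*}\ran D_{L^*}$, hence $\ran D_{V^*}\subset\ran D_{L^*}=\ker L^*$, and $Y=(D_GL^*D_{V^*})(D_{V^*}LD_G)=0$ follows immediately. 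The crucial idea you are missing is to apply (ii) not to $\ran Y$ but to the subspace $D_{F^*}\ran D_{V^*}$, which lies in $\ran D_{F^*}$ for free.
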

\begin{proof} Let $T$ takes the form
\[
T=\begin{pmatrix}-FD^*G+D_{F^*}LD_G&FD_D\cr D_{D^*} G&D
\end{pmatrix}
\]
with contractions $D,$ $F,$ $G$, and $L$.  From \eqref{defshort} it
follows that if $\left(D^2_{P_\sN T}\right)_\sH=0$ then $X=I_\sH$ is
a unique solution of \eqref{RicX}. Assume $\left(D^2_{P_\sN
T}\right)_\sH\ne 0$. Since $(D^2_T)_\sH=0$, from the equivalences
\eqref{RAV} it follows that
\[
\left(D^2_{P_\sM T^*}\right)_\sH\ne 0.
\]
From \eqref{UNIQQ} and \eqref{defshort} we have  $D_G\ne 0,$
$D_{F^*}\ne 0$, $D_L=0,$ and
\begin{equation}
\label{IINTER} \ran D_G\cap\ran D_{F^*}\subset\ran (D_{F^*}D_{L^*}).
\end{equation}
According to Proposition \ref{RICEQ} the equation \eqref{RicX} is
equivalent to the equation \eqref{RICSHORTX}. We will prove that
\eqref{RICSHORTX} has a unique solution $X=I_\sH.$ Suppose that $X$
is a solution.  Define  $\Psi:=I_\sH- X^{1/2}FF^*X^{1/2}$. Since
$\Psi=I_\sH-X+X^{1/2}D^2_{F^*}X^{1/2}$, we have $\Psi\ge I_\sH-X$
and $\Psi\ge X^{1/2}D^2_{F^*}X^{1/2}$. Therefore
\[
(I_\sH-X)^{1/2}=U\Psi^{1/2},\; D_{F^*}X^{1/2}=V\Psi^{1/2},
\]
where $U:\cran \Psi^{1/2}\to \cran (I_\sH-X)^{1/2}$, $V:\cran
\Psi^{1/2}\to \cran D_{F^*}=\cran (D_{F^*}X^{1/2})$, and
$U^*U+V^*V=I_{\cran \Psi^{1/2}}$. Hence $U^*U=D^2_{V}$. Since
$X^{1/2}D_{F^*}=\Psi^{1/2}V^*$, we get
\[
X^{1/2}D_{F^*}D_{V^*}=\Psi^{1/2}V^*D_{V^*}=\Psi^{1/2}D_{V}V^*.
\]
From
\[
I_\sH-X=\Psi^{1/2}U^*U\Psi^{1/2}=\Psi^{1/2}D^2_V\Psi^{1/2}
\]
we get that $\ran(I_\sH-X)^{1/2}=\Psi^{1/2}\ran D_V.$
Therefore,
\begin{equation}
\label{inclus}
\ran X^{1/2}D_{F^*}D_{V^*}\subset \ran(I_\sH-X)^{1/2}.
\end{equation}
Using the well known relation
$$\ran X^{1/2}\cap\ran (I_\sH-X)^{1/2}=\ran
(X^{1/2}(I_\sH-X)^{1/2})$$ for every $X\in [0, I_\sH]$, from
\eqref{inclus} we get that
$$D_{F^*}\ran
D_{V^*}\subset\ran(I_\sH-X)^{1/2}.$$
The equation \eqref{RICSHORTX}
can be rewritten as follows
\[
X=G^*G+D_GL^*VV^*LD_G.
\]
Since $L^*L=I_{\sD_G}$, we get $I_\sH-X=D_GL^*D^2_{V^*}LD_G.$ It
follows that
\[
\ran (I_\sH-X)^{1/2}=D_GL^*\ran D_{V^*}\subset\ran D_G.
\]
Now we obtain
\[
 D_{F^*}\ran D_{V^*}\subset\ran D_G\cap\ran
D_{F^*}\subset D_{F^*}\ran D_{L^*}.
\]
Hence $\ran D_{V^*}\subset\ran D_{L^*}$. Since
$L:\sD_{G}\to\sD_{F^*}$ is isometry, we get $\ker L^*=\ran D_{L^*}$.
Therefore $L^*\uphar\ran D_{V^*}=0$. It follows $\ran
(I_\sH-X)^{1/2}=\{0\}$, i.e., $X=I_\sH$.
\end{proof}
Observe, Example \ref{EX2} shows that conditions \eqref{UNIQQ} are
not necessary for the uniqueness of the solutions of the KYP
inequality \eqref{SkypX}.
\begin{theorem}
\label{unique1} Let a contraction $T=\begin{pmatrix} A&B\cr
C&D\end{pmatrix}$:$\begin{pmatrix}\sH\cr\sM
\end{pmatrix}$ $\to\begin{pmatrix}\sH\cr\sN
\end{pmatrix}$ possesses the properties
\begin{equation}
\label{UNIQ1} \left\{
\begin{split}
&\left(D^2_T\right)_\sH=0,\;\left(D^2_{T^*}\right)_\sH= 0,\\
&\ran\left(\left(D^2_{P_\sN T}\right)_\sH\right)^{1/2} \bigcap
\ran\left(\left(D^2_{P_\sM T^*}\right)_\sH\right)^{1/2}=\{0\}
\end{split}
\right..
\end{equation}
Then the generalized KYP inequality \eqref{kyp3} has a unique
solution $X=I_\sH$.
\end{theorem}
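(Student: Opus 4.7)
The plan is to apply Theorem \ref{intersec} symmetrically to $T$ and $T^*$, and to connect the resulting Riccati uniqueness statements to the extremal solutions $X_{\min}$, $X_{\max}$ of the generalized KYP inequality.

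First I would translate the hypotheses \eqref{UNIQ1} into properties of the parametrization \eqref{BOPR}. By \eqref{defshort}, the identity $(D^2_T)_\sH=0$ is equivalent to $D_LD_G=0$, so $L$ restricts to an isometry on $\sD_G$; analogously $(D^2_{T^*})_\sH=0$ forces $L^*$ to be isometric on $\sD_{F^*}$, making $L$ a unitary operator from $\sD_G$ onto $\sD_{F^*}$. The same formulas give $\ran\bigl((D^2_{P_\sN T})_\sH\bigr)^{1/2}=\ran D_G$ and $\ran\bigl((D^2_{P_\sM T^*})_\sH\bigr)^{1/2}=\ran D_{F^*}$, so the last condition in \eqref{UNIQ1} reads $\ran D_G\cap\ran D_{F^*}=\{0\}$. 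Since $\ran\bigl((D^2_{T^*})_\sH\bigr)^{1/2}=\{0\}$, the hypotheses of Theorem \ref{intersec} are satisfied, and the Riccati equation \eqref{RicX} has unique solution $X=I_\sH$ on $(0,I_\sH]$. Because \eqref{UNIQ1} is symmetric in $T\leftrightarrow T^*$, the same conclusion holds for the Riccati equation associated with $T^*$.

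Second I would verify that the minimal bounded KYP solution $X_{\min}\in[0,I_\sH]$ equals $I_\sH$. The iterative sequence from the Introduction, $X^{(0)}=0$ and $X^{(n+1)}=I_\sH-\bigl(D^2_T+T^*(I_\sH-X^{(n)})P'_\sH T\bigr)_\sH\uphar\sH$, is nondecreasing by Proposition \ref{short}(2) and bounded above by $I_\sH$; its strong limit satisfies the Riccati equation \eqref{RicX} thanks to Proposition \ref{short}(3). A standard comparison argument using Proposition \ref{short}(2) (any KYP solution $Y=I_\sH-X$ satisfies $Y\le I_\sH=Y^{(0)}$ and inductively $Y\le Y^{(n)}$) identifies this limit as $X_{\min}$; combining with the Riccati uniqueness above gives $X_{\min}=I_\sH$. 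Applying the same argument to $T^*$ yields $X_{\min}^{T^*}=I_\sH$, which via the duality $X\leftrightarrow X^{-1}$ from equivalence (4) after \eqref{kyp3*} translates to $X_{\max}=I_\sH$ for the generalized KYP associated with $T$.

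Finally I would conclude by invoking the quadratic-form comparisons recalled in the Introduction: every solution $X$ of \eqref{kyp3} satisfies $\dom X_{\min}^{1/2}\supset\dom X^{1/2}\supset\dom X_{\max}^{1/2}$ together with the corresponding norm inequalities. With $X_{\min}=X_{\max}=I_\sH$, both domain inclusions collapse to $\dom X^{1/2}=\sH$, and the norm inequalities pinch to
\[
\|u\|^2=\|X_{\min}^{1/2}u\|^2\le\|X^{1/2}u\|^2\le\|X_{\max}^{1/2}u\|^2=\|u\|^2,\quad u\in\sH,
\]
forcing $X=I_\sH$. The main obstacle is the duality step identifying $X_{\max}$ with the inverse of $X_{\min}^{T^*}$; this step is clean here precisely because $I_\sH$ is its own inverse, so one sidesteps the subtleties that would otherwise arise in inverting general unbounded positive selfadjoint operators.
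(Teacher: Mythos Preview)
Your argument has a genuine gap: you never verify that the passive system $\tau=\{T;\sH,\sM,\sN\}$ is minimal, yet you invoke results that require minimality. The quadratic-form comparisons you cite from the Introduction (the existence of $X_{\min}$, $X_{\max}$ with $\dom X_{\min}^{1/2}\supset\dom X^{1/2}\supset\dom X_{\max}^{1/2}$ and the sandwiching norm inequalities) are stated there under the hypothesis that $\tau$ is a \emph{minimal} system; this is the content of the cited \cite{ArKaaP3} result. Without minimality, you have no $X_{\max}$ to pinch against, and your final paragraph collapses. Concretely, your iteration argument (together with Theorem~\ref{intersec}) does correctly show that the only solution of the bounded KYP \eqref{SkypX} on $(0,I_\sH]$ is $I_\sH$, and by duality the same holds for \eqref{SkypX*}. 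But a solution $X$ of the \emph{generalized} KYP \eqref{kyp3} need not satisfy $X\le I_\sH$ or $X\ge I_\sH$; for such an $X$, neither $X$ nor $X^{-1}$ lies in $(0,I_\sH]$, and your two uniqueness statements say nothing about it.

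The paper closes this gap by proving minimality directly from the hypotheses \eqref{UNIQ1}. Using the parametrization, one has $L:\sD_G\to\sD_{F^*}$ unitary and $\ran D_G\cap\ran D_{F^*}=\{0\}$; the paper then argues that $\ker\bigl(F^*D_GL^*D_{F^*}\bigr)=\{0\}$ (since any vector in this kernel would force $D_GL^*D_{F^*}f\in\ran D_{F^*}\cap\ran D_G=\{0\}$), hence $\cran(D_{F^*}LD_G F)=\sD_{F^*}$ and the auxiliary system $\nu$ is controllable; observability follows symmetrically, and Theorem~\ref{TT1} transfers minimality to $\tau$. Only after this can one legitimately invoke the extremal solutions of the generalized KYP. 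Your iterative route to the bounded uniqueness is a clean alternative to the paper's use of Corollary~\ref{Col3}, but it does not replace the minimality verification---that step is doing essential work and must be supplied.
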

\begin{proof}
Let  $T^*=\begin{pmatrix} A^*&C^*\cr
B^*&D^*\end{pmatrix}$:$\begin{pmatrix}\sH\cr\sN
\end{pmatrix}$ $\to\begin{pmatrix}\sH\cr\sM
\end{pmatrix}$ be the adjoint operator and let
\begin{equation}
\label{SkypX*} \left\{
\begin{split}
&(I_\sH-Z)P_\sH\le \left(D^2_{T^*}+T(I_\sH-Z)P_\sH T^*\right)_\sH\\
&0< Z\le I_\sH
\end{split}
\right.\;,
\end{equation}
\begin{equation}
\label{RicX*} \left\{\begin{array}{l}(I_\sH-Z)P_\sH=
\left(D^2_{T^*}+T(I_\sH-Z)P_\sH T^*\right)_\sH,\\
0<Z\le I_\sH
\end{array}
\right.
\end{equation}
be the corresponding KYP inequality and Riccati equation. By Theorem
\ref{intersec} the identity operator $I_\sH$ is a unique solution of
the Riccati equations \eqref{RicX} and \eqref{RicX*}.

Let us show that the passive system
\[
\tau=\left\{\begin{pmatrix} A&B\cr
C&D\end{pmatrix};\sH,\sM,\sN\right\}
\]
is minimal. Consider the parametrization of the contraction $T$:
\[
T=\begin{pmatrix}-FD^*G+D_{F^*}LD_G&FCD_D\cr D_{D^*} G&D
\end{pmatrix}
\]
and let $Q=\begin{pmatrix}D_{F^*}LD_G&F\cr
G&0\end{pmatrix}:\begin{pmatrix}\sH\cr\sD_D\end{pmatrix}\to
\begin{pmatrix}\sH\cr \sD_{D^*}\end{pmatrix}.$

Assume $\left(D^2_{P_\sN T}\right)_\sH\ne 0$ and $\left(D^2_{P_\sM
T^*}\right)_\sH\ne 0.$ From \eqref{UNIQ1} and \eqref{defshort} we
get the equalities
\[
D_LD_G=D_{L^*}D_{F^*}=0,\; \ran D_G\cap\ran D_{F^*}=\{0\}.
\]
It is known that
\[
\ran F+\ran D_{F^*}=\sH.
\]
Suppose that $F^*D_GL^*D_{F^*}f=0,$ where $f\in\sD_{F^*}$. Then the
vector $D_GL^*D_{F^*}f\in\ker F^*$, hence $D_GL^*D_{F^*}f\in\ran
D_{F^*}$. Since $L:\sD_G\to\sD_{F^*}$ is unitary operator and $ \ran
D_G\cap\ran D_{F^*}=\{0\}$, we get $f=0$. It follows that $
\cran(D_{F^*}LD_G F)=\sD_{F^*}$ and
\[
\cspan\{(D_{F^*}LD_G )^nF\sM,\; n=0,1,2\ldots\}=\sH.
\]
Thus, the system $\nu=\left\{\begin{pmatrix}D_{F^*}LD_G& F\cr
G&0\end{pmatrix};\sH,\sD_{D},\sD_{D^*}\right\}$ is controllable.
Similarly, the system $\nu$ is observable. By Theorem \eqref{TT1}
the system $\tau$ is minimal.

If $\left(D^2_{P_\sN T}\right)_\sH=D_G=0$ then by \eqref{RAV} also
$\left(D^2_{P_\sM T^*}\right)_\sH=D_{F^*}=0.$ Therefore $\ran F=\ran
G^*=\sH$ and $Q=\begin{pmatrix}0&F\cr G&0\end{pmatrix}$. It follows
that  in this case the systems $\nu$ and $\tau$ are minimal.
According to the result of \cite{ArKaaP3} the KYP inequality
\eqref{SkypX} has a minimal solution. Since $I_\sH$ is a solution of
\eqref{SkypX} and the Riccati equation \eqref{RicX} has a unique
solution $X=I_\sH$, the inequality \eqref{SkypX} has a unique
solution $X=I_\sH.$ Similarly the KYP inequality \eqref{SkypX*} also
has a unique solution $Z=I_\sH$. Hence, the minimal solution of the
generalized KYP inequality \eqref{kyp3}
\[
\begin{split}
&\left\|\begin{pmatrix}{X}^{1/2}&0\cr
0&I_\sM\end{pmatrix}\begin{pmatrix}x\cr u\end{pmatrix}
\right\|^2-\left\|\begin{pmatrix}{X}^{1/2}&0\cr
0&I_\sN\end{pmatrix}\begin{pmatrix} A&B
\cr C&D\end{pmatrix}\begin{pmatrix}x\cr u\end{pmatrix}\right\|^2\ge 0\\
& \qquad\mbox{for all}\quad\; x\in\dom X^{1/2},\, u\in\sM
\end{split}
\]
is $X=I_\sH$. Since  $Z$ is a solution of the generalized KYP
inequality for the adjoint operator \eqref{kyp3*} $\iff$ $X=Z^{-1}$
is a solution $X$ of \eqref{kyp3}, the minimal solution of
\eqref{kyp3*} is $Z=I_\sH$. Hence, the identity operator $I_\sH$ is
also the maximal solution of \eqref{kyp3}. So, \eqref{kyp3} has a
unique solution $X=I_\sH$.
\end{proof}
\begin{corollary}
Let $ \tau=\left\{\begin{pmatrix} A&B\cr
C&D\end{pmatrix};\sH,\sM,\sN\right\} $ be a passive system and let
$\Theta(\lambda)$ be its transfer function. If the operator
$T=\begin{pmatrix} A&B\cr
C&D\end{pmatrix}$:$\begin{pmatrix}\sH\cr\sM
\end{pmatrix}$ $\to\begin{pmatrix}\sH\cr\sN
\end{pmatrix}$ possesses the properties
\eqref{UNIQ1} then all minimal passive realizations of $\Theta$ are
unitary equivalent.
\end{corollary}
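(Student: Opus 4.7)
The plan is to combine Theorem \ref{unique1} with the characterization of optimal and $(*)$-optimal realizations through $X_{\min}$ and $X_{\max}$, and then compare the controllability norms of $\tau$ with those of any other minimal passive realization.

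First, by Theorem \ref{unique1} the generalized KYP inequality \eqref{kyp3} for $\tau$ has the unique solution $X=I_\sH$; in particular $X_{\min}=X_{\max}=I_\sH$. The proof of Theorem \ref{unique1} also establishes that $\tau$ itself is minimal under \eqref{UNIQ1}. As recalled in the introduction, for a minimal passive realization the systems built from the data of $\tau$ with the weights $X^{1/2}_{\min}$ and $X^{1/2}_{\max}$ are a minimal optimal and a minimal $(*)$-optimal realization of $\Theta$; since here both weights equal the identity, these derived systems coincide with $\tau$ itself. Hence $\tau$ is simultaneously the minimal optimal and the minimal $(*)$-optimal realization of $\Theta$.

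Now let $\tau'=\left\{\begin{pmatrix}A'&B'\cr C'&D\end{pmatrix};\sH',\sM,\sN\right\}$ be any minimal passive realization of $\Theta$. Since $\tau$ is optimal and $\tau'$ is passive, for all $n\ge 0$ and all $u_0,\dots,u_n\in\sM$,
\[
\left\|\sum_{k=0}^n A^kBu_k\right\|_\sH \le \left\|\sum_{k=0}^n (A')^kB'u_k\right\|_{\sH'}.
\]
Since $\tau$ is also $(*)$-optimal and $\tau'$ is observable (being minimal), the reverse inequality holds, so the two norms are equal. Setting $U_0\bigl(\sum_{k=0}^n A^kBu_k\bigr):=\sum_{k=0}^n (A')^kB'u_k$ defines a well-defined linear isometry from $\sH^c_\tau$ onto $\sH^c_{\tau'}$; minimality of both systems gives $\sH^c_\tau=\sH$ and $\sH^c_{\tau'}=\sH'$, so $U_0$ extends by continuity to a unitary operator $U:\sH\to\sH'$.

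Finally one verifies the intertwining relations $UB=B'$, $UA=A'U$, and $C=C'U$. The first two are immediate from the action of $U$ on vectors of the form $\sum A^kBu_k$; the third uses the equality $CA^kB=C'(A')^kB'$ for all $k\ge 0$, which is the coincidence of the Taylor coefficients of the common transfer function $\Theta$, extended by continuity from the dense set $\sH^c_\tau$. The main conceptual step is the identification of $\tau$ with both the minimal optimal and the minimal $(*)$-optimal realization of $\Theta$; once this is in hand, the controllability-based isometric construction is standard.
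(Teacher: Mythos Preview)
Your proof is correct and makes explicit what the paper leaves unproved (the corollary is stated without proof immediately after Theorem~\ref{unique1}). The argument you give---Theorem~\ref{unique1} yields $X_{\min}=X_{\max}=I_\sH$ and minimality of $\tau$, hence $\tau$ is simultaneously the minimal optimal and the minimal $(*)$-optimal realization of $\Theta$, and then the squeeze on controllability norms produces the unitary intertwiner with any other minimal passive realization---is precisely the natural route suggested by the surrounding material in the introduction (the definitions of optimal and $(*)$-optimal realizations and the \cite{ArKaaP} construction from $X_{\min}$, $X_{\max}$).
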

\begin{remark}
The conditions \eqref{UNIQQ} are equivalent to the following:
\[
\left\{\begin{array}{l} \ran D_T\cap\sH=\{0\},\\
\left(\ran D_{P_\sN T}\cap \sH\right)\bigcap\left(\ran D_{P_\sM
T^*}\cap \sH\right)\subset\ran D_{T^*}\cap\sH,
\end{array}
\right.
\]
and
\[
\eqref{UNIQ1} \iff
\left\{\begin{array}{l} \ran D_T\cap\sH=\ran D_{T^*}\cap\sH=\{0\},\\
\left(\ran D_{P_\sN T}\cap \sH\right)\bigcap\left(\ran D_{P_\sM
T^*}\cap \sH\right)=\{0\}.
\end{array}
\right.
\]
\end{remark}
\section {Approximation of the minimal solution} \label{approx}
The solutions of the Riccati equations \eqref{RicXX}--
\eqref{RICSHORTX} are fixed points of the corresponding maps. We
will prove that extremal solutions can be obtained by iteration
procedures with a special initial points.
\begin{theorem}
\label{iter0} Let $\tau=\left\{\begin{pmatrix} A&B\cr
C&D\end{pmatrix};\sH,\sM,\sN\right\}$ be a passive observable system
and let $T=\begin{pmatrix} A&B\cr C&D\end{pmatrix}$. Let us define
the sequence of nonnegative contractions in $\sH$:
\begin{equation}
\label{seq0} Y^{(0)}:=I_\sH,\; Y^{(n+1)}:= \left(D^2_T
+T^*Y^{(n)}P'_\sH T\right)_\sH\uphar\sH,\; n=0,1,\ldots.
\end{equation}
Then
\begin{enumerate}
\item the sequence $\{Y^{(n)}\}_{n=0}^\infty$ is a nonincreasing,
\item the operator
$$Y_0:=s-\lim\limits_{n\to \infty}Y^{(n)}$$
satisfies the equality
\begin{equation}
\label{yo0}
Y_0P_\sH=\left(D^2_T +T^*Y_0P'_\sH T\right)_\sH
\end{equation}
and $\ker (I_\sH-Y_0)=\{0\}$,
\item the operator $Y_0$ is a maximal solution of \eqref{Skyp}.
\end{enumerate}
\end{theorem}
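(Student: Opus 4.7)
The plan is to verify the three claims in order by combining the monotonicity of the shorted-operator map with its strong continuity along nonincreasing sequences (Proposition~\ref{short}), together with Proposition~\ref{observable} for the kernel condition.

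First, for (1), I would argue by induction. The base case reduces to
\[
Y^{(1)} = \bigl(D^2_T + T^* P'_\sH T\bigr)_\sH \uphar \sH = \bigl(I - T^* P_\sN T\bigr)_\sH \uphar \sH \le I_\sH = Y^{(0)},
\]
since the shorted operator is dominated by the original. The inductive step $Y^{(n)} \le Y^{(n-1)} \Rightarrow Y^{(n+1)} \le Y^{(n)}$ uses that $P'_\sH \ge 0$ (so $T^* Y^{(n)} P'_\sH T \le T^* Y^{(n-1)} P'_\sH T$) combined with the monotonicity of shorting, Proposition~\ref{short}(2).

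Next, for (2), since $\{Y^{(n)}\}$ is a nonincreasing sequence of selfadjoint contractions it has a strong limit $Y_0$ with $0 \le Y_0 \le I_\sH$. The operators $D^2_T + T^* Y^{(n)} P'_\sH T$ then form a nonincreasing sequence with strong limit $D^2_T + T^* Y_0 P'_\sH T$, so Proposition~\ref{short}(3) gives
\[
s-\lim\limits_{n\to\infty}\bigl(D^2_T + T^* Y^{(n)} P'_\sH T\bigr)_\sH = \bigl(D^2_T + T^* Y_0 P'_\sH T\bigr)_\sH.
\]
Passing to the strong limit in the recursion \eqref{seq0} then produces \eqref{yo0}. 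In particular, $Y_0$ satisfies the inequality in \eqref{Skyp}, so Proposition~\ref{observable} applied to $X := I_\sH - Y_0$ (which is a nonnegative contraction in $\sH$) yields $\ker(I_\sH - Y_0) = \{0\}$; observability of $\tau$ is used precisely here.

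Finally, for (3), let $Y$ be any solution of \eqref{Skyp}. I would show by induction that $Y \le Y^{(n)}$ for all $n$. The base case $Y \le I_\sH = Y^{(0)}$ is immediate. For the inductive step, combining $Y \le Y^{(n)}$, the defining inequality for $Y$, and Proposition~\ref{short}(2) yields
\[
Y P_\sH \le \bigl(D^2_T + T^* Y P'_\sH T\bigr)_\sH \le \bigl(D^2_T + T^* Y^{(n)} P'_\sH T\bigr)_\sH = Y^{(n+1)} P_\sH,
\]
so $Y \le Y^{(n+1)}$; passing to the limit gives $Y \le Y_0$. The only nontrivial ingredient in the entire argument is the strong continuity of shorting along nonincreasing sequences, which is already available as Proposition~\ref{short}(3); the rest is bookkeeping, and the choice of the initial point $Y^{(0)} = I_\sH$ is what ensures convergence to the \emph{maximal} (rather than merely some) solution of \eqref{Skyp}.
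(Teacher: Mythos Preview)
Your proof is correct and follows essentially the same approach as the paper: induction using the monotonicity of the shorting operation (Proposition~\ref{short}(2)) for part~(1) and~(3), Proposition~\ref{short}(3) to pass to the limit in the recursion for~(2), and Proposition~\ref{observable} for the kernel condition. The only cosmetic difference is that the paper starts the induction in~(3) at $Y\le Y^{(1)}$ via the estimate~\eqref{est1}, whereas you start at $Y\le Y^{(0)}=I_\sH$; both work equally well.
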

\begin{proof}
Let us show that the sequence defined by \eqref{seq0} is
nonincreasing. Since $(D^2_{P_\sN T})_\sH\le P_\sH$, we get
\[
Y^{(1)}P_\sH=\left(D^2_T +T^*P'_\sH T\right)_\sH= \left(D^2_{P_\sN
T}\right)_\sH.
\]
Hence $Y^{(1)}\le Y^{(0)}$. Suppose that $Y^{(n)}\le Y^{(n-1)}$ for
given $n\ge 1$. Then
\[
Y^{(n+1)}P_\sH=\left(D^2_T +T^*Y^{(n)}P'_\sH T\right)_\sH\le \left(D^2_T +T^*Y^{(n-1)}P'_\sH T\right)_\sH=Y^{(n)}P_\sH.
\]
Thus, the sequence $\{Y^{(n)}\}_{n=0}^\infty$ is nonincreasing.
Because the operators $Y^{(n)}$ are nonnegative, there exists a
strong limit
$$Y_0=s-\lim\limits_{n\to \infty}Y^{(n)}.$$
Since
$$ Y^{(n+1)}=\left(D^2_T +T^*Y^{(n)}P'_\sH T\right)_\sH\uphar\sH,\; n=0,1,\ldots,$$
applying Proposition \ref{short} we get \eqref{yo0}.

Let us show that every solution $Y$ of \eqref{Skyp} satisfies the
inequality $Y\le Y_0$.  Suppose that $Y$ is a solution of
\eqref{Skyp}. Taking into account \eqref{est1} we get $Y\le
Y^{(1)}$. If it is proved that $Y\le Y^{(n)}$ for some $n\ge 1$ then
\[
Y\le \left(D^2_T+ T^*YP'_\sH T\right)_\sH\uphar\sH
\le \left(D^2_T+ T^*Y^{(n)} P'_\sH T\right)_\sH\uphar\sH=Y^{(n+1)}.
\]
By induction it follows that $Y\le Y_0.$ Using Proposition
\ref{observable} we get $\ker(I-Y_0)=\{0\}.$
 \end{proof}

\begin{remark}
\label{rem11} The  nondecreasing sequence
\[
X^{(0)}=I_\sH-Y^{(0)}=0,\;
X^{(n+1)}=I_\sH-Y^{(n+1)}=I_\sH-\left(D^2_T
+T^*(I_\sH-X^{(n)})P'_\sH T\right)_\sH\uphar\sH\
\]
strongly converges to the minimal solution $X_0$ of the KYP
inequality \eqref{SkypX} and the Riccati equations \eqref{RicXX}--
\eqref{RICSHORTX} (see Theorem \ref{RICEQ}). From \eqref{shortmap}
and \eqref{EXSHORT} we get
\[
\begin{split}
&X^{(n+1)}=G^*G+\\
&\qquad+
D_GL^*D_{F^*}{(X^{(n)})}^{1/2}\left(I_\sH-(X^{(n)})^{1/2}FF^*(X^{(n)})^{1/2}\right)^{-1}
(X^{(n)})^{1/2}D_{F^*}LD_G.
\end{split}
\]
\end{remark}
\begin{example}
 \label{EX1}
 Let $F\in\bL(\sM,\sH)$ be
a strict contraction ($||Fh||_\sH<||h||_\sM$ for all
$h\in\sM\setminus\{0\}$) and $\ker F^*=\{0\}$. Then $\cran
D_{F^*}=\sH$. Let $\alpha\in(0,1)$ and suppose that the operator
$G\in\bL(\sH,\sN)$ is chosen such that $G^*G=\alpha FF^*$. Then
$$\ker G=\{0\},\; D_G=\left(I_\sH-\alpha
FF^*\right)^{1/2}$$ and $\ran D_{G}=\sH$. Therefore $\ran
D_{F^*}\subset\ran D_{G}.$ Let $L=I_\sH$. By Theorem \ref{ParContr}
the operator
 \[
Q=\begin{pmatrix} D_{F^*}D_G&F\cr
G&0\end{pmatrix}:\begin{pmatrix}\sH\cr
\sM\end{pmatrix}\to\begin{pmatrix}\sH\cr \sN\end{pmatrix}
\]
is a contraction and from \eqref{defshort} we get that
$\left(D^2_Q\right)_\sH=0,$ $\left(D^2_{P_\sN Q}\right)_\sH\ne 0,$
and
$$\ran\left(\left(D^2_{P_\sM Q^*}\right)_\sH\right)^{1/2}\subset\ran\left(\left(D^2_{P_\sN Q}\right)_\sH\right)^{1/2}.$$
The system
\[
\nu =\left\{\begin{pmatrix} D_{F^*}D_G&F\cr
G&0\end{pmatrix};\sH,\sM,\sN\right\}
\]
is passive. The condition $\ker F^*=\{0\}$ yields that
\[
\bigcap_{n\ge 0}\ker\left(F^*(D_G D_{F^*})^n\right)=\bigcap_{n\ge
0}\ker\left(G(D_{F^*}D_G)^n\right)=\{0\}.
\]
So, the system $\nu$ is minimal. Its transfer function $Z(\lambda)$
takes the form
\[
Z(\lambda)=\lambda
G\left(I_\sH-\lambda(I_\sH-FF^*)^{1/2}(I_\sH-\alpha
FF^*)^{1/2}\right)^{-1}F.
\]
The corresponding Riccati equation  \eqref{RICSHORTX} takes the form
\begin{equation}
\label{spec1}
\left\{
\begin{split}
&X=\alpha FF^*+(I_\sH-\alpha
FF^*)^{1/2}D_{F^*}X^{1/2}(I_\sH-X^{1/2}FF^*X^{1/2})^{-1}X^{1/2}D_{F^*}(I_\sH-\alpha
FF^*)^{1/2}\\
&0<X\le I_\sH
\end{split}
\right.
\end{equation}
and has a solution $X_0=\alpha I_\sH$. Because $\alpha I_\sH<
I_\sH$, the system $\nu$ is non-optimal realization of $Z(\lambda)$.

Let us show that $X_0=\alpha I_\sH$ is the minimal solution of
\eqref{spec1}. Note that $X_{\min}\le X_0=\alpha I_\sH$. According
to Remark \ref{rem11} the sequence of operators
\[
\begin{split}
& X^{(0)}=0,\;X^{(n+1)}=\alpha FF^*+\\
&+(I_\sH-\alpha
FF^*)^{1/2}D_{F^*}(X^{(n)})^{1/2}(I_\sH-(X^{(n)})^{1/2}FF^*(X^{(n)})^{1/2})^{-1}(X^{(n)})^{1/2}D_{F^*}(I_\sH-\alpha
FF^*)^{1/2},\\
&\qquad n=0,1,\ldots
\end{split}
\]
is nondecreasing and strongly converges to the minimal solution
$X_{\min}$ of \eqref{spec1}. Hence $X^{(n)}\le \alpha I_\sH$ and
because $X^{(1)}=\alpha FF^*$, one has $X^{(n)}FF^*=FF^* X^{(n)}$
for all $n$. It follows that $X_{\min}FF^*=FF^*X_{\min}$ and
\[
\begin{split}
&I_\sH-X_{\min}=(I_\sH -\alpha
FF^*)\left(I_\sH-D^2_{F^*}X_{\min}(I_\sH-X_{\min}FF^*)^{-1}\right)=\\
&=(I-X_{\min})(I_\sH -\alpha FF^*)(I_\sH-X_{\min}FF^*)^{-1}.
\end{split}
\]
Hence
\[
(I_\sH-X_{\min})(\alpha I_\sH - X_{\min})FF^*=0.
\]
Therefore $(\alpha I_\sH-X_{\min})FF^*=0.$ Taking into account that
$\ker F^*=\{0\}$, we get $X_{\min}=\alpha I_\sH$.

Note that if the orthogonal projection $P$ in $\sH$ commutes with
$FF^*$ then the operator $X=P+\alpha P^\perp$ is a solution of the
Riccati equation \eqref{spec1}.

 Consider the adjoint system
\[
\nu^* =\left\{\begin{pmatrix} D_GD_{F^*}&G^*\cr
F^*&0\end{pmatrix};\sH,\sN,\sM\right\}.
\]
We will show that $X=I_\sH$ is the minimal solution of the
corresponding Riccati equation
\[
\left\{
\begin{split}
&X=FF^*+D_{F^*}(I_\sH-\alpha FF^*)^{1/2}X^{1/2}(I_\sH-\alpha
FF^*)^{-1}X^{1/2}(I_\sH-\alpha X^{1/2}FF^*X^{1/2})^{1/2}D_{F^*}\\
&0<X\le I_\sH
\end{split}
\right. \;.
\]
According to Remark \ref{rem11} the sequence of operators
\[
\begin{split}
& X^{(0)}=0,\;X^{(n+1)}= FF^*+\\
&+D_{F^*}(I_\sH-\alpha
FF^*)^{1/2}(X^{(n)})^{1/2}(I_\sH-\alpha(X^{(n)})^{1/2}FF^*(X^{(n)})^{1/2})^{-1}(X^{(n)})^{1/2}
(I_\sH-\alpha
FF^*)^{1/2}D_{F^*},\\
&\qquad n=0,1,\ldots
\end{split}
\]
is nondecreasing and strongly converges to the minimal solution
$X_{\min}$. It follows  that $X^{(n)}FF^*=FF^* X^{(n)}$ for all $n$,
 $X_{\min}FF^*=FF^*X_{\min}$ and
\[
\begin{split}
&I_\sH-X_{\min}=(I_\sH -FF^*)\left(I_\sH-(I_\sH -\alpha F{F^*})X_{\min}(I_\sH-\alpha X_{\min}FF^*)^{-1}\right)=\\
&=(I_\sH-X_{\min})(I_\sH -FF^*)(I_\sH -\alpha FF^*X_{\min})^{-1}.
\end{split}
\]
Hence
\[
(I_\sH-X_{\min})( I_\sH-\alpha X_{\min})=0.
\]
 Because $I_\sH -\alpha X_{\min}$ has bounded inverse, we get
$X_{\min}=I_\sH.$ Thus, the minimal passive system $\nu$ is
$(*)$-optimal realization of the function $Z(\lambda)$.
\end{example}
Observe that this example shows that the condition \eqref{nesopt}
for a passive minimal system is not sufficient for optimality.

\end{document}